\documentclass[11pt,a4paper]{article}

\usepackage{amsmath}
\usepackage{amsthm}

\usepackage{newtxtext}
\usepackage{newtxmath}
\usepackage{bm}

\usepackage[a4paper,margin=1in]{geometry}
\usepackage{graphicx}
\usepackage{float}
\usepackage{booktabs}
\usepackage{enumitem}
\usepackage{caption}
\usepackage{algorithm}
\usepackage{algpseudocode}

\usepackage{tikz}
\usetikzlibrary{positioning,arrows.meta,shapes.geometric,trees,fit,backgrounds}

\theoremstyle{plain}
\newtheorem{theorem}{Theorem}
\theoremstyle{definition}
\newtheorem{remark}{Remark}
\makeatletter
\renewenvironment{proof}[1]{%
  \par\medskip\noindent\textit{#1}\enspace\ignorespaces
}{%
  \par\medskip
}
\makeatother
\newcommand{\Halmos}{\hspace*{\fill}\ensuremath{\square}}

\usepackage[authoryear,round]{natbib}
\bibpunct[, ]{(}{)}{,}{a}{}{,}%

\newcommand{\FIGURE}[3]{%
  \centering
  #1\par
  \caption{#2}%
  \par\smallskip{\footnotesize #3\par}%
}
\newcommand{\ACKNOWLEDGMENT}[1]{\par\bigskip\noindent\textbf{Acknowledgment.}\quad #1\par}
\newcommand{\EMAIL}[1]{\texttt{#1}}

\usepackage[colorlinks=true,linkcolor=blue!55!black,citecolor=blue!55!black,urlcolor=blue!55!black]{hyperref}
\hypersetup{%
  pdftitle={Simulation-Optimization of Systems of Optimizers: Exploiting the Inner Optimization's Geometry},
  pdfauthor={Zhou He}
}

\title{Simulation-Optimization of Systems of Optimizers:\\Exploiting the Inner Optimization's Geometry}
\author{Zhou He\\[4pt]\small University of Chinese Academy of Sciences.\quad\EMAIL{hezhou@ucas.ac.cn}}
\date{\today}

\begin{document}
\maketitle

\begin{abstract}
We study \emph{simulation-optimization of systems of optimizers} (SOSO): agent-based simulations in which every agent solves a structured optimization---a linear program (LP), mixed-integer program, or dynamic program---at every decision epoch. Such systems are ubiquitous in multi-echelon supply chains, wholesale electricity markets, and automated logistics, yet standard simulation optimization treats the simulation as a black box and discards the inner optimization's geometry. We formalize SOSO and develop a framework that converts this geometry into computational advantage. We prove that the inner LP's optimal basis and dual variables propagate upward through the stochastic dynamics to yield an exact, unbiased, single-replication infinitesimal perturbation analysis (IPA) gradient of the outer objective, with detectable, measure-zero basis changes as the only obstructions. We derive a common-random-numbers covariance bound governed by a computable basis-disagreement count that is a free by-product of the forward simulation. We further prove that IPA variance grows exponentially with feedback depth---a formalization of the bullwhip effect in gradient estimation---and introduce surrogate-as-decision with an error budget that unifies LP-horizon and reinforcement-learning surrogates under a single bound. We compose these results into PRIME, a stochastic-approximation solver integrating the IPA gradient, adaptive step sizing, and multi-start spatial diversification. On six testbeds spanning the SOSO taxonomy, PRIME achieves the best or tied-best optimality gap at equal budget with near-zero oscillation and narrow seed-to-seed variance; the IPA gradient provides a ${\sim}2000{\times}$ per-replication variance reduction over independent finite differences at a representative policy point; and common random numbers cut paired-difference variance by more than $10{,}000{\times}$ (peak, near capacity saturation) in a 1{,}000-SKU, six-distribution-center supply chain. Black-box gradient methods suffer exponential variance growth under feedback amplification, while PRIME remains stable. These results establish exploiting embedded-optimization geometry as a practically significant direction for simulation optimization.
\end{abstract}

\noindent\textbf{Keywords:} Simulation optimization, systems of optimizers, infinitesimal perturbation analysis, agent-based simulation, common random numbers

\section{Introduction}\label{sec:intro}

\subsection{The SOSO problem: definition and decision process}

Consider a planner who controls a shared, low-dimensional policy parameter $\theta\in\Theta$---a price, a tax, a capacity threshold, or a service-level target---and a system of $N$ agents that operates over $T$ decision epochs. At each epoch, every agent $i$ observes its local state $s_{i,t}$ and chooses its action $a_{i,t}$ by solving a \emph{structured optimization problem} whose parameters depend on $s_{i,t}$ and on $\theta$: a linear program (LP), a mixed-integer linear program (MILP), or a dynamic program (DP) or reinforcement-learning (RL) policy. The system then evolves stochastically and path-dependently, and the planner seeks the policy $\theta^{*}\in\arg\min_{\theta\in\Theta}J(\theta)$ that minimizes the expected total cost $J(\theta)=\mathbb{E}_\xi[L(\theta,\xi)]$. Because the dynamics are stochastic and admit no closed form, $J(\theta)$ can be evaluated only by simulating the system, and the planner must optimize over $\theta$ from simulation output. We call this the \emph{Simulation-Optimization of Systems of Optimizers} (SOSO) problem. The name is deliberate: each agent is itself an optimizer, so optimization is nested at three levels---inside each agent at each epoch, inside the simulation that threads these solves together, and outside it over the policy $\theta$ (Figure~\ref{fig:soso-nesting}). The per-period action is the solution of a parametric optimization, not a fixed reaction function, and it is this feature---not the event logic of a conventional discrete-event simulation---that drives both the computational cost and the mathematical structure of the problem.

Three concrete decision contexts instantiate this structure and motivate the inner-problem types we study:
\begin{enumerate}
\item \emph{Multi-echelon supply chains.} Each firm or distribution center solves a production- or transfer-planning LP against forecast demand \citep{simchilevi2014}, with fixed resource-consumption rates and state-dependent right-hand sides such as on-hand inventory and outstanding orders. The outer parameter $\theta$ is a transfer-capacity limit, a holding-cost weight, or an order-up-to threshold; the planner's objective is total inventory, stockout, and transportation cost.

\item \emph{Wholesale electricity markets.} Each generator solves a unit-commitment MILP with binary start-up variables and continuous output levels, given posted prices and network constraints \citep{hobbs2001}. The outer parameter $\theta$ is a carbon tax, a price cap, or a renewable subsidy; the planner's objective is social cost or reliability.

\item \emph{Automated logistics and learned policies.} Dispatchers solve minimum-cost flow LPs or vehicle-routing MILPs, while an increasing number of agents execute learned DP/RL policies for routing, pricing, or admission control \citep{sutton2018,powell2022}. The outer parameter $\theta$ controls fleet size, service-level targets, or commission rates.
\end{enumerate}
Of the three inner-problem types, this paper develops the full gradient and CRN machinery for LP inners (Sections~\ref{sec:problem}--\ref{sec:theory}), extends the surrogate-as-decision error budget to DP/RL inners (Section~\ref{sec:eps}); for MILP inners, the error-budget framework extends in principle through LP-relaxation surrogates, but exact IPA for MILP inners remains an open problem (Section~\ref{sec:limitations}). The MILP cells of the taxonomy (Figure~\ref{fig:soso-taxonomy}) are therefore left to future work. In every case the per-replication cost of the simulation is dominated by the inner optimization solves---$N$ agents $\times$ $T$ periods $\times$ $M$ replications $\times$ $K$ candidate policies, routinely $10^{7}$--$10^{9}$ inner solves at industrial scale---so the geometry of those inner problems, not the event logic of the simulation, is the dominant computational object.

\begin{figure}[t]
\centering
\begin{tikzpicture}[font=\small, >=Stealth,
  every node/.style={align=center},
  outerbox/.style={draw=blue!55!black, line width=0.9pt, fill=blue!5},
  simbox/.style={draw=green!45!black, line width=0.8pt, fill=green!7},
  innbox/.style={draw=orange!60!black, line width=0.8pt, fill=orange!10},
  tab/.style={font=\small\bfseries, fill=white, inner sep=2pt},
]
\draw[outerbox] (-5.0,-2.0) rectangle (5.0,3.0);
\node[tab, text=blue!55!black] at (0,3.0) {Outer level --- simulation optimization};
\node[font=\footnotesize] at (0,2.2) {Planner:\ $\theta^{*}\in\arg\min_{\theta\in\Theta}J(\theta)$,\quad $J(\theta)=\mathbb{E}_\xi[L(\theta,\xi)]$};

\draw[simbox] (-3.8,-1.7) rectangle (3.8,1.5);
\node[tab, text=green!40!black] at (0,1.5) {Simulation level --- one replication $\xi$};
\node[font=\footnotesize] at (0,0.9) {$t=0,\dots,T{-}1$;\ \ agents $i=1,\dots,N$};
\node[font=\footnotesize] at (0,0.4) {$s_{i,t+1}=\phi_i(s_{i,t},a^{*}_{i,t},\xi_t,\theta)$,\quad reward $\pi_i$};

\draw[innbox] (-3.4,-1.5) rectangle (3.4,-0.2);
\node[tab, text=orange!55!black] at (0,-0.2) {Inner level --- per agent, per epoch};
\node[font=\footnotesize] at (0,-0.75) {$a^{*}_{i,t}=\arg\max_a\,c(s_{i,t},\theta)^{\top}a$\quad\text{s.t.}\ $Ga\le h(s_{i,t},\theta)$};
\node[font=\scriptsize, text=orange!55!black] at (0,-1.18) {solver returns $a^{*}$, dual $\lambda$, basis $B_k$};

\draw[->, line width=0.9pt, blue!55!black] (-4.3,1.95) -- (-4.3,-1.95);
\node[font=\footnotesize, text=blue!55!black, rotate=90, anchor=south] at (-4.35,0) {policy $\theta$};

\draw[->, line width=0.9pt, gray!45!black] (4.3,-1.95) -- (4.3,1.95);
\node[font=\footnotesize, text=gray!30!black, rotate=-90, anchor=south] at (4.35,0) {cost $L$, gradient $\widehat{J}'_{\mathrm{IPA}}$};
\end{tikzpicture}
\caption{The SOSO decision process, with optimization nested at three levels. The \emph{outer level} is a simulation-optimization loop minimizing $J(\theta)=\mathbb{E}_\xi[L(\theta,\xi)]$. Each evaluation runs a \emph{simulation}---one replication $\xi$ over $T$ periods and $N$ agents---whose state evolves stochastically and path-dependently. At every epoch each agent solves an \emph{inner} structured optimization (LP, MILP, or DP/RL) parameterized by its state $s_{i,t}$ and by $\theta$; the solver returns the action $a^{*}_{i,t}$ together with the dual $\lambda$ and optimal basis $B_k$ that this paper exploits.}\label{fig:soso-nesting}
\end{figure}

\subsection{Why SOSO is not a single centralized optimization}\label{sec:centralized}

A natural objection to the formulation above is to ask why the planner does not simply collect every agent's optimization into one large problem and solve jointly for all actions, all periods, and all scenarios at once---a centralized multistage stochastic program in which a single solver with full information chooses the entire trajectory $\{a_{i,t}\}_{i,t}$. This formulation is mathematically clean, and for small systems it is the right thing to do. For the systems that motivate SOSO it is often impractical, for four reinforcing reasons.

\paragraph{Computational intractability at scale.}
Aggregating $N$ agents over $T$ periods yields a joint decision vector of dimension $N\cdot T\cdot n$ (with $n$ the per-agent action dimension) inside a scenario tree whose node count grows exponentially in $T$ under stochastic demand \citep{shapiro2014}. Even when every agent's subproblem is a small LP, the aggregated problem is a large-scale stochastic program whose solve time grows superlinearly in its dimension; introduce the binary variables of unit commitment or vehicle routing and the aggregated problem becomes a MILP whose worst-case complexity is exponential. The systems SOSO targets---a thousand-SKU distribution network over a month, a regional electricity market over a year---are precisely those for which the aggregated problem is often too large to solve within the planning horizon it is meant to govern. Simulation, by contrast, decomposes the joint problem back into $N\cdot T$ small inner solves and pays only for the replications it needs.

\paragraph{Information that is local, real-time, and not aggregable.}
Each agent holds state---on-hand inventory, machine availability, a just-arrived demand signal, a local price quote---that the planner neither observes directly nor can collect in time to act on. A centralized solver requires all of this information to flow up to the center before each decision, which imposes communication cost, latency, and staleness, and in strategic settings (competing firms, regulated generators) the agents may be unwilling to reveal their private state at all. The SOSO formulation respects this asymmetry directly: the planner broadcasts only the low-dimensional policy $\theta$, and each agent combines $\theta$ with its own local state inside its own optimization. Global information (the policy) travels one way; local information stays where it originates.

\paragraph{Autonomy and incentive.}
In the deployed settings above, the agents are not subroutines of a single objective but autonomous decision makers---independent firms in a supply chain, separate generating companies in a market, separately managed distribution centers. A planner who chooses every action directly would require agents to disclose private information and would centralize decisions that are currently made using local expertise, potentially reintroducing principal--agent frictions. Setting a policy $\theta$ and letting each agent optimize against it is the standard practical resolution: the planner shapes the rules of the game---a transfer price, a capacity allowance, a service-level target---and the agents, who best understand their local tradeoffs, execute within them.

\paragraph{The equilibrium alternative is also limited.}
A second natural reaction is to model the agents as players in a game and compute the equilibrium: a Nash--Cournot equilibrium in an electricity market \citep{hobbs2001,metzler2003nash,kolstad1991computing} or, more generally, a complementarity solution \citep{facchinei2003finite,gabriel2013}. Equilibrium models are attractive when each agent's reaction is a closed-form, state-independent function and the system settles to a static fixed point. The systems we study do neither: demand is stochastic and seasonal, inventories and prices are state variables that carry history forward, and the planner's objective depends on the full trajectory of costs rather than on a single-period equilibrium. With path-dependent, non-stationary dynamics no closed-form equilibrium exists, and the system's response to $\theta$ can be evaluated only by simulating the trajectory. This rules out a complementarity reformulation and is, ultimately, what makes the problem one of simulation optimization rather than of mathematical programming.

\smallskip
SOSO thus gives the planner a single lever---the policy $\theta$---through which to balance centralized control against local responsiveness: the planner sets $\theta$ and optimizes it in the outer loop, while each agent retains full authority over its own inner optimization and reacts rapidly to its local state. The technical question the rest of the paper addresses is how to make that outer optimization tractable---how to exploit, rather than discard, the structure of the inner optimizations the agents are already solving.

\subsection{Related work}\label{sec:gaps}

Adopting this formulation, the planner must place the simulation inside a simulation-optimization outer loop \citep{hong2009,hong2015discrete,andradottir1998}. Every replication of $L(\theta,\xi)$ is itself a sequence of inner optimizations, so the total cost can reach $N\cdot T\cdot M\cdot K$ inner solves. We review six streams of related work and identify the gaps that motivate our approach.

\paragraph{IPA and gradient estimation.}
Infinitesimal perturbation analysis (IPA) was developed for queueing and manufacturing systems by \citet{glasserman1991,hocao1991,lecuyer1990}. In these classical settings, the gradient source is the simulation's event logic (arrival times, service completions). \citet{glasserman1995sensitivity} applied IPA to base-stock levels in multi-echelon production-inventory systems---the closest predecessor to our work. However, their base-stock policy is a fixed decision rule, not an embedded LP whose basis carries gradient data. \citet{fu1994sample} developed sample-path derivatives for $(s,S)$ inventory systems, again treating the policy as a rule rather than an optimization. No existing IPA variant reads the gradient from an inner optimization's parameters.

\paragraph{Simulation optimization.}
Discrete simulation optimization (SO) via ranking and selection, stochastic approximation, and metamodel methods is surveyed by \citet{hong2015discrete,andradottir1998}. Variance reduction techniques---adaptive control variates \citep{kim2007adaptive}, common random numbers (CRN) \citep{glasserman1992some,lecuyer1994efficiency}, and knowledge-gradient methods \citep{frazier2009knowledge}---improve estimation efficiency but do not exploit inner-optimization structure. Stochastic Kriging \citep{ankenman2010,kleijnen2008} surrogates the outer objective $J(\theta)$, which is orthogonal to our surrogate-as-decision (surrogating the agent's decision $a^{*}(s)$). Most treat the simulation as a black box in the sense that they do not exploit the inner optimization's basis or dual information.

\paragraph{Parametric LP sensitivity.}
The theory of parametric linear programming partitions the $(s,\theta)$-space into critical regions on each of which the optimal basis is constant \citep{gal1995,bazaraa2010}. LP postoptimality analysis \citep{freund1985postoptimal} and degeneracy theory \citep{greenberg1986analysis} study how the primal and dual vary with the right-hand side (RHS). Simplex implementations report the optimal basis status as a standard output \citep{bixby1992implementing}, making basis reconstruction practical. Yet this rich sensitivity data is not exploited by any existing SO gradient estimator.

\paragraph{Differentiable optimization.}
\citet{amos2017} differentiate through one optimization layer via the Karush--Kuhn--Tucker (KKT) system (see also the survey by \citet{kotary2021end}). \citet{mandi2020interior} solve the LP-based prediction+optimization problem via interior-point differentiation. These methods handle a single optimization layer embedded in a learning pipeline. Our setting requires propagating through multi-period, multi-agent dynamics where the inner optimization appears at every epoch---a fundamentally different challenge.

\paragraph{Equilibrium models and supply chain.}
Mathematical programs with equilibrium constraints (MPEC) and equilibrium models \citep{hobbs2001,metzler2003nash,gabriel2013} and multilevel stochastic programming \citep{shapiro2014} solve the inner layer to a closed-form equilibrium or exactly. Our inner layer is a stochastic, non-equilibrium simulation whose path cannot be pre-computed. \citet{facchinei2003finite} provide the theoretical foundation for complementarity, but their framework does not handle path-dependent stochastic dynamics. On the application side, the bullwhip effect \citep{sterman1989,lee1997,chen2000,dejonckheere2003measuring} is the canonical feedback-amplification phenomenon in multi-echelon supply chains \citep{clark1960optimal,chen1994lower,longo2008,simchilevi2014}; our Theorem~\ref{thm:var} formalizes it as the regime where IPA variance grows exponentially.

\paragraph{Multi-agent reinforcement learning.}
Multi-agent reinforcement learning (MARL) \citep{sutton2018,powell2022} also studies systems of autonomous agents that interact through shared state, but it treats each agent's decision rule as a learned policy whose sensitivity must be re-estimated from simulation or replay data, not as a structured optimization whose basis and dual variables carry exact local derivative information. When the inner problem is an LP or a smooth DP, that geometry is available for free, and a learning-based controller that does not use this structure does not obtain the single-replication, unbiased gradient channel we develop. MARL is nonetheless the natural language when no structured inner optimization is available, and the error-budget characterization (Theorem~\ref{thm:eps}) bridges the two views by casting policy sub-optimality as the same surrogate-error quantity that governs LP-horizon surrogates.

In short, existing work provides either (i)~gradient estimators that do not read the inner optimization's basis, (ii)~equilibrium or multilevel reformulations that do not handle path-dependent stochastic dynamics, or (iii)~learning-based controllers that do not exploit structured inner geometry. The gap we address is the intersection: non-equilibrium, path-dependent, multi-agent systems whose agents solve structured optimizations, and whose outer policy can be optimized with the exact, cheap, and bounded-error approach developed below.

\subsection{Contributions}\label{sec:contrib}

This paper makes four contributions, organized around the question: when an agent-based simulation is a system of optimizers, how should one exploit the inner optimization's geometry to make the outer simulation optimization tractable, accelerated, and bounded in error?

\begin{enumerate}
\item \textbf{An exact, single-replication gradient of the outer objective from the LP basis} (\S\ref{sec:problem}, \S\ref{sec:theory}). We prove that the dual-induced IPA estimator is unbiased on a full-measure set of policies (Theorem~\ref{thm:ipa}), with the derivative--expectation exchange reducing to linearity of expectation for LP inners (Remark~\ref{rmk:poly}). The novelty is propagating the LP basis-sensitivity of \citet{gal1995,bazaraa2010} forward through the multi-period, multi-agent stochastic dynamics, yielding $\nabla J(\theta)$ from one replication; classical IPA \citep{glasserman1991,glasserman1995sensitivity,fu1994sample} reads gradients from event logic or base-stock rules rather than an embedded optimization.

\item \textbf{Online basis-change detection and a CRN covariance bound governed by a basis-disagreement count} (\S\ref{sec:theory}). Basis changes are the only obstructions to IPA unbiasedness; we prove they form a measure-zero set (Theorem~\ref{thm:kink}) that is detected online by comparing active-row signatures, and we derive a CRN covariance bound decomposing paired-difference variance into a within-region term and a basis-disagreement term $\kappa$ (Theorem~\ref{thm:crn}). The diagnostic $\kappa$ is a free by-product of the two forward solves and drives a kink-aware replication allocation that sharpens confidence intervals at fixed budget.

\item \textbf{Exponential feedback amplification of IPA variance, tamed by an error-budgeted surrogate} (\S\ref{sec:theory}, \S\ref{sec:algo}). In coupled systems the IPA variance grows as $\Theta(g^{2m})$ in the feedback depth $m$ (Theorem~\ref{thm:var}), formalizing the bullwhip phenomenon \citep{sterman1989,lee1997,chen2000,dejonckheere2003measuring}; surrogate-as-decision dampens the gain, and its error budget (Theorem~\ref{thm:eps}) satisfies $\varepsilon=0$ iff the state carries no decision-relevant information. The characterization is agnostic to the inner type, unifying LP-horizon and RL/DP-policy surrogates under a single bound.

\item \textbf{A computational framework, \emph{PRIME}, validated on six testbeds including a 1{,}000-SKU supply chain} (\S\ref{sec:algo}--\S\ref{sec:res-r5}). We compose the IPA gradient (C1), adaptive step size (C2), and multi-start ensemble (C3) into a single stochastic-approximation solver, PRIME, and validate it on six testbeds spanning all three stochastic taxonomy classes explored in this paper (I/LP/s, I/DP/s, C/LP/s). On a JD.com supply chain with 1{,}000 stock-keeping units (SKUs) across six distribution centers (one regional distribution center, RDC, and five fulfillment distribution centers, FDCs), PRIME scales to industrial-size SOSO instances, and a head-to-head comparison against classic SO baselines (finite-difference SA, SPSA, stochastic Kriging, Knowledge Gradient) in \S\ref{sec:res-r1} shows PRIME achieves the best or tied-best final gap on all six testbeds with substantially narrower seed-to-seed variance, while black-box gradient methods suffer the exponential variance amplification of Theorem~\ref{thm:var} on feedback-coupled systems.
\end{enumerate}

\subsection{Organization}

Section~\ref{sec:problem} states the modeling assumptions, formalizes the SOSO problem together with its three structural elements, builds a classification from them, and identifies four solution difficulties (D1--D4). Section~\ref{sec:theory} develops the structural analysis, opening with the deterministic baseline (\S\ref{sec:det-case}) and proceeding through Theorems~\ref{thm:ipa}--\ref{thm:eps}, each annotated with its conditions and which difficulty it resolves. Section~\ref{sec:algo} presents algorithms. Section~\ref{sec:expts} reports five key computational results with systematic experimental evidence, including a comparison against classic simulation-optimization baselines and real-world case studies (\S\ref{sec:res-r1}). Section~\ref{sec:disc} discusses scope, when to use IPA versus finite differences, and limitations. Section~\ref{sec:conc} concludes. The proofs of all theorems, the supporting verification and technical lemmas, and a glossary of abbreviations are provided in the online e-companion to this paper.

\section{Problem formulation}\label{sec:problem}

This chapter formalizes the SOSO problem introduced in \S\ref{sec:intro}. We first state the modeling assumptions and their justification (\S\ref{sec:assump}); then give the formal model and define its three structural elements---agent coupling, inner problem type, and exogenous dynamics---in mathematical terms (\S\ref{sec:setting}); then build a classification of SOSO from these elements and state which classes this paper addresses (\S\ref{sec:taxonomy}); and close by identifying the sources of computational difficulty (\S\ref{sec:difficulties}).

\subsection{Modeling assumptions and justification}\label{sec:assump}

We study stochastic multi-agent systems in which each agent solves a structured optimization parameterized by its state $s_{i,t}$ and by a shared policy $\theta$, and the system evolves under exogenous noise $\xi$. Writing the inner problem compactly as $\max_a\,c(s,\theta)^{\top}a$ subject to $Ga\le h(s,\theta)$, $a\ge0$, the objects involved are a technology matrix $G$, a cost vector $c$, a right-hand side $h$, a state transition map $\phi_i$, and a per-period reward $\pi_i$; the formal model is given in \S\ref{sec:setting}. We impose the following assumptions, which define the class of systems we analyze and underpin the sensitivity theory of \S\ref{sec:theory}.

\begin{description}
\item[\textnormal{(A1)} LP structure:] $G$ is constant and $h$ is affine in $(s,\theta)$. This is standard in production planning and network-flow models, where the capacity matrix is fixed and only the right-hand side varies with demand or cost parameters \citep{simchilevi2014,vanderbei2014}.

\item[\textnormal{(A2)} Non-degeneracy and transversality:] each inner LP has a unique optimal basis, and the basis-change residual map has $0$ as a regular value. Non-degeneracy is generic in model parameters \citep{bazaraa2010}; degeneracy, when it occurs, is resolved by a lexicographic perturbation \citep{gal1995}.

\item[\textnormal{(A3)} Generic-no-kink orbit:] on a full-measure set $\Theta^{\circ}\subseteq\Theta$, the orbit stays in the interior of a single LP critical region almost surely. This is a genericity condition that follows from (A2) and the regular value theorem.

\item[\textnormal{(A4)} Smooth, bounded data:] $c$, $\phi_i$, and $\pi_i$ are $C^1$ with bounded derivatives. This is standard in simulation theory; the data of the systems we study are polynomial or piecewise-linear ($C^\infty$ within each critical region).

\item[\textnormal{(A5)} Finite state moments:] $\mathbb{E}\sum_{i,t}\|s_{i,t}\|^2<\infty$. This holds for bounded state spaces and for Lipschitz dynamics driven by light-tailed noise, as verified per testbed in the online e-companion.
\end{description}

\subsection{The SOSO problem and its three structural elements}\label{sec:setting}

We formalize the model from \S\ref{sec:assump} and define its three structural elements mathematically.

\paragraph{Sets, decisions, and the inner optimization.}
The system comprises $N$ agents $i\in\mathcal{I}=\{1,\dots,N\}$ operating over periods $t\in\mathcal{T}=\{0,\dots,T{-}1\}$. At period $t$, agent $i$ observes state $s_{i,t}$ and chooses action $a_{i,t}$ by solving a structured optimization
\begin{equation}\label{eq:inner-gen}
a^{*}_{i,t}\in\arg\max_{a\in\mathcal{X}_{i,t}(s,\theta)}
f_i\!\bigl(a;\,s_{i,t},\theta\bigr),
\end{equation}
whose leading case, under (A1), is the linear program
\begin{equation}\label{eq:innerlp}
a^{*}_{i,t}=\arg\max_a\, c_i^\top a
\quad\text{s.t.}\quad Ga\le h_i,\; a\ge 0,
\end{equation}
with $c_i=c(s_{i,t},\theta)$, $h_i=h(s_{i,t},\theta)$, and constant $G\in\mathbb{R}^{m\times n}$.

\paragraph{State dynamics and objective.}
The state evolves by $s_{i,t+1}=\phi_i(s_t,a^{*}_t,\xi_t,\theta)$, driven by exogenous noise $\xi=(\xi_0,\dots,\xi_{T-1})$ on $(\Omega,\mathcal{F},\mathbb{P})$, which also serves as the common-random-numbers source. The planner chooses $\theta\in\Theta\subseteq\mathbb{R}^d$ to minimize
\begin{equation}\label{eq:outer}
J(\theta)=\mathbb{E}_\xi\!\left[\sum_{t\in\mathcal{T}}\sum_{i\in\mathcal{I}}
\pi_i(a^{*}_{i,t},s_{i,t},\theta)\right].
\end{equation}

\paragraph{The three structural elements.}
The modeling choices above are governed by three elements, defined below in mathematical terms; \S\ref{sec:taxonomy} classifies SOSO by their combinations.

\begin{description}
\item[\textnormal{(E1)} Agent coupling:] is a property of the transition $\phi_i$. Writing $s_t=(s_{1,t},\dots,s_{N,t})$, agents are \emph{independent} when each transition depends only on the agent's own state and action, $s_{i,t+1}=\phi_i(s_{i,t},a_{i,t},\xi_t,\theta)$, equivalently $\partial\phi_i/\partial s_{j,t}\equiv0$ and $\partial\phi_i/\partial a_{j,t}\equiv0$ for $j\ne i$. They are \emph{coupled} when the transition depends on other agents' state or action---equivalently, the cross-agent Jacobian $\partial s_{i,t+1}/\partial s_{j,t}$ is nonzero for some $i\ne j$, as arises through a shared market-clearing price, an upstream inventory, or a network flow.

\item[\textnormal{(E2)} Inner problem type:] is determined by the structure of $(f_i,\mathcal{X}_{i,t})$: an \emph{LP} when $f_i$ is affine in $a$ and $\mathcal{X}_{i,t}=\{a\ge0:Ga\le h_i(s,\theta)\}$ is a polyhedron with constant $G$; a \emph{MILP} when the same holds with a subset of components of $a$ restricted to integers; and a \emph{DP/RL} policy---exemplified by finite Markov decision processes (MDPs) and the continuous linear-quadratic regulator (LQR; the LQ setting)---when the agent maximizes a value function, $a^{*}_{i,t}\in\arg\max_a Q_i(a,s_{i,t})$, with $Q_i$ obtained from Bellman recursion or learned from data.

\item[\textnormal{(E3)} Exogenous dynamics:] is determined by the law $\mathbb{P}$ of $\xi$: \emph{deterministic} when $\xi$ is a fixed sequence ($\mathbb{P}$ a point mass), and \emph{stochastic} otherwise, with sub-cases including i.i.d.\ noise, seasonal noise whose marginal varies periodically in $t$, and autocorrelated (e.g., Markov) noise.
\end{description}

\subsection{Classification of SOSO}\label{sec:taxonomy}\label{sec:variants}

The three structural elements of \S\ref{sec:setting}---coupling (E1), inner problem type (E2), and exogenous dynamics (E3)---define a taxonomy of SOSO instances. Figure~\ref{fig:soso-taxonomy} lays out the resulting classes and highlights those this paper addresses; the exposition follows a simple-to-complex progression throughout ---deterministic before stochastic, independent agents before coupled, LP before MILP/DP---mirroring the order in which the technical complications are introduced.

\begin{figure}[t]
\centering
\begin{tikzpicture}[
  font=\footnotesize,
  every node/.style={align=center},
  box/.style={draw, minimum width=1.7cm, minimum height=0.55cm, fill=blue!5},
  boxfw/.style={draw, minimum width=1.7cm, minimum height=0.55cm, fill=white},
  cell/.style={draw, minimum width=1.6cm, minimum height=0.42cm, font=\scriptsize, inner sep=2pt},
  hcore/.style={draw=red!65!black, line width=0.9pt, fill=red!8},
  hbudget/.style={draw=blue!50!black, line width=0.7pt, fill=blue!6, densely dashed},
  arr/.style={-{Stealth[length=1.7mm]}, semithick, gray!55},
  lab/.style={font=\tiny\itshape, inner sep=1pt},
]
\node[box, fill=blue!15, minimum width=1.3cm] (root) at (0,0) {SOSO};

\node[font=\scriptsize\bfseries, text=blue!55!black] at (3.0, 4.55) {E1: coupling};
\node[font=\scriptsize\bfseries, text=blue!55!black] at (6.0, 4.55) {E2: inner type};
\node[font=\scriptsize\bfseries, text=blue!55!black] at (9.2, 4.55) {E3: dynamics};
\node[font=\scriptsize\bfseries, text=blue!55!black] at (10.6, 4.58) {Class};

\node[box, hcore] (ind)  at (3.0, 2.4) {Independent\\agents};
\node[box, hcore] (coup) at (3.0,-2.4) {Coupled\\agents};

\node[box, hcore]   (ind-lp)   at (6.0, 3.7) {LP inner};
\node[boxfw]        (ind-milp) at (6.0, 2.4) {MILP inner};
\node[box, hbudget] (ind-dp)   at (6.0, 1.1) {DP/RL inner};
\node[box, hcore] (coup-lp)   at (6.0,-1.1) {LP inner};
\node[boxfw]      (coup-milp) at (6.0,-2.4) {MILP inner};
\node[boxfw]      (coup-dp)   at (6.0,-3.7) {DP/RL inner};

\node[cell, hcore]   (ind-lp-det)   at (9.2, 4.0) {Deterministic};
\node[cell, hcore]   (ind-lp-sto)   at (9.2, 3.4) {Stochastic};
\node[cell]          (ind-milp-det) at (9.2, 2.7) {\scriptsize future work};
\node[cell]          (ind-milp-sto) at (9.2, 2.1) {\scriptsize future work};
\node[cell, hbudget] (ind-dp-det)   at (9.2, 1.4) {Deterministic};
\node[cell, hbudget] (ind-dp-sto)   at (9.2, 0.8) {Stochastic};

\node[cell, hcore] (coup-lp-det)   at (9.2,-0.8) {Deterministic};
\node[cell, hcore] (coup-lp-sto)   at (9.2,-1.4) {Stochastic};
\node[cell]        (coup-milp-det) at (9.2,-2.1) {\scriptsize future work};
\node[cell]        (coup-milp-sto) at (9.2,-2.7) {\scriptsize future work};
\node[cell]        (coup-dp-det)   at (9.2,-3.4) {\scriptsize future work};
\node[cell]        (coup-dp-sto)   at (9.2,-4.0) {\scriptsize future work};

\node[font=\scriptsize, text=black!70, right=0.08cm of ind-lp-det]   {I/LP/d};
\node[font=\scriptsize, text=black!70, right=0.08cm of ind-lp-sto]   {I/LP/s};
\node[font=\scriptsize, text=black!70, right=0.08cm of ind-milp-det] {I/MILP/d};
\node[font=\scriptsize, text=black!70, right=0.08cm of ind-milp-sto] {I/MILP/s};
\node[font=\scriptsize, text=black!70, right=0.08cm of ind-dp-det]   {I/DP/d};
\node[font=\scriptsize, text=black!70, right=0.08cm of ind-dp-sto]   {I/DP/s};
\node[font=\scriptsize, text=black!70, right=0.08cm of coup-lp-det]  {C/LP/d};
\node[font=\scriptsize, text=black!70, right=0.08cm of coup-lp-sto]  {C/LP/s};
\node[font=\scriptsize, text=black!55, right=0.08cm of coup-milp-det]{C/MILP/d};
\node[font=\scriptsize, text=black!55, right=0.08cm of coup-milp-sto]{C/MILP/s};
\node[font=\scriptsize, text=black!55, right=0.08cm of coup-dp-det]  {C/DP/d};
\node[font=\scriptsize, text=black!55, right=0.08cm of coup-dp-sto]  {C/DP/s};

\draw[arr] (root) -- (ind);
\draw[arr] (root) -- (coup);
\draw[arr] (ind) -- (ind-lp);
\draw[arr] (ind) -- (ind-milp);
\draw[arr] (ind) -- (ind-dp);
\draw[arr] (ind-lp) -- (ind-lp-det);
\draw[arr] (ind-lp) -- (ind-lp-sto);
\draw[arr] (ind-milp) -- (ind-milp-det);
\draw[arr] (ind-milp) -- (ind-milp-sto);
\draw[arr] (ind-dp) -- (ind-dp-det);
\draw[arr] (ind-dp) -- (ind-dp-sto);
\draw[arr] (coup) -- (coup-lp);
\draw[arr] (coup) -- (coup-milp);
\draw[arr] (coup) -- (coup-dp);
\draw[arr] (coup-lp) -- (coup-lp-det);
\draw[arr] (coup-lp) -- (coup-lp-sto);
\draw[arr] (coup-milp) -- (coup-milp-det);
\draw[arr] (coup-milp) -- (coup-milp-sto);
\draw[arr] (coup-dp) -- (coup-dp-det);
\draw[arr] (coup-dp) -- (coup-dp-sto);

\node[cell, hcore,   minimum width=1.7cm, font=\tiny, anchor=west] at (-0.5,-2.7) {core (IPA+CRN)};
\node[cell, hbudget, minimum width=1.7cm, font=\tiny, anchor=west] at (-0.5,-3.2) {error-budget};
\node[cell,          minimum width=1.7cm, font=\tiny, anchor=west] at (-0.5,-3.7) {future work};
\end{tikzpicture}
\caption{A taxonomy of SOSO along the three structural elements of \S\ref{sec:setting}: agent coupling (independent vs.\ coupled), inner problem type (LP, MILP, DP/RL), and exogenous dynamics (deterministic vs.\ stochastic). Solid red outline: core of this paper, where the full IPA/CRN analysis applies. Dashed blue: partially explored via the error-budget and surrogate-as-decision framework. Plain outline: left for future research. The deterministic case is the analytic baseline; stochasticity is the regime where simulation becomes necessary.}\label{fig:soso-taxonomy}
\end{figure}

\paragraph{The classes this paper addresses.}
We tag each SOSO class by a label of the form \emph{coupling/inner/dynamics}: $I$ or $C$ for independent or coupled agents, LP/MILP/DP for the inner problem type, and $d$ or $s$ for deterministic or stochastic dynamics. Thus $C/LP/d$ denotes coupled agents with an LP inner under deterministic dynamics. The same tag annotates the leaves of Figure~\ref{fig:soso-taxonomy}, in the spirit of Kendall's notation for queueing systems. The computational experiments in \S\ref{sec:expts} address six testbeds drawn from the deterministic baseline and three stochastic classes, progressing from simplest to most challenging:
\begin{enumerate}
\item \textbf{Deterministic, any inner, any coupling} ($I/LP/d$, $C/LP/d$; \S\ref{sec:det-case}). With known $\xi$, there is no sampling error: $J(\theta)$ is a deterministic parametric program, the IPA gradient is exact from the LP basis, and surrogate errors are computable rather than estimated. This is the analytic baseline against which the stochastic complications are measured. Testbeds TB1det, TB2det, and TB5det (Table~\ref{tab:testbeds}) cover this case.

\item \textbf{Stochastic, independent-agent LP} ($I/LP/s$, TB5). On real-seasonal SupplyGraph demand, PRIME achieves $0.004\pm0.004$, confirming that C1 and C2 extend to single-agent LPs without coupling. The $I/LP/s$ cell lies in the core (red in Figure~\ref{fig:soso-taxonomy}).

\item \textbf{Stochastic, independent-agent DP/RL} ($I/DP/s$, TB3 and TB6). Two testbeds span this cell. TB3 (continuous LQR) confirms C1's IPA channel extends beyond LP to smooth DP inners; TB6 (discrete MDP) tests C3 in the absence of IPA, where multi-start spatial diversification provides the decisive advantage. The $I/DP/s$ cell is partially explored (dashed blue).

\item \textbf{Stochastic, coupled-agent LP} ($C/LP/s$, TB1, TB2, TB4). Three testbeds span the core cell: TB1 (minimal market) validates all three components under mild feedback; TB2 (Beer Game) stresses C2 and C3 under strong feedback amplification ($g{=}2$, $m{=}4$); TB4 (JD.com chain) demonstrates industrial-scale performance (1{,}000 SKUs). The $C/LP/s$ cell lies in the core (red).
\end{enumerate}
Cells marked as future work in Figure~\ref{fig:soso-taxonomy}---$I/MILP/d$, $I/MILP/s$, $C/MILP/s$, and $C/DP/s$---are important open problems: the error-budget framework applies in principle, but discrete decisions introduce new challenges (non-convex outer objective, combinatorial sensitivity) beyond this paper's scope. The $C/MILP/s$ and $C/DP/s$ cells are particularly challenging because coupled feedback through integer decisions amplifies discretely rather than geometrically.

\subsection{Sources of difficulty}\label{sec:difficulties}

We close by identifying four difficulties that any solution to the SOSO problem must address. Each arises from one or more of the three structural elements of \S\ref{sec:setting}---agent coupling (E1), inner problem type (E2), and exogenous dynamics (E3)---and each is labeled (D1--D4) for cross-reference with the theory and algorithms.

\begin{description}
\item[\textnormal{(D1)} Nested cost (all three elements):] Each replication of $J(\theta)$ requires $N\cdot T$ inner solves; an SO procedure examining $K$ candidates with $M$ replications each calls the solver $N\cdot T\cdot M\cdot K$ times. The total scales with the coupling breadth $N$ (E1), with the inner problem type (E2)---an LP solve is cheap, but a MILP or a full DP backup is orders of magnitude slower---and with the number of replications $M$ that stochastic dynamics (E3) demand. For a five-firm, 30-period case study ($N{=}5$, $T{=}30$, $M{=}200$, $K{=}11$), this is $330{,}000$ LP solves for a single $J(\theta)$ curve scan; at industrial scale ($N{=}50$, $T{=}365$, $M{=}1000$, $K{=}50$), the total reaches $\sim10^9$ solves. Even at $0.1$\,s per solve, a single central-FD gradient evaluation for the case-study scale above ($2MNT=6{\times}10^4$ solves) takes $\sim 2$ hours; at the industrial scale the same evaluation would require roughly $3.65\times10^7$ solves, or about $1{,}000$ hours.

\item[\textnormal{(D2)} Wasted structure (inner type, E2):] The inner optimization's geometry---the LP basis and dual variables, or more generally the active set and value-function curvature---carries exact local derivative information, but black-box SO via finite differences ignores it, requiring two replications per gradient point with variance inflated by the $1/h^2$ factor inherent in the differencing step. The severity depends on the inner problem type: LP inners have the richest exploitable structure, whereas MILP and discrete-DP inners have ill-defined active sets.

\item[\textnormal{(D3)} Feedback amplification (coupling, E1):] This difficulty is present only when agents are coupled: one agent's action enters another agent's state, so the sensitivity propagated by IPA is amplified at each feedback stage, with variance growing as $\Theta(g^{2m})$ in the chain depth $m$ ($g$ the per-stage gain; $2^{8}=256$ for $g{=}2$, $m{=}4$). It is absent for independent agents, whose sensitivity stays bounded.

\item[\textnormal{(D4)} Error control (inner type and dynamics, E2/E3):] Replacing an exact inner solve by a cheaper surrogate saves compute but introduces a decision error $\varepsilon$ that shifts the outer optimum. The magnitude of this error is governed jointly by the inner problem type and the exogenous dynamics: under i.i.d.\ dynamics a static surrogate is often near-optimal, whereas seasonal dynamics make the surrogate miss exploitable temporal structure. Without a theory of when this shift is bounded, the planner cannot safely trade accuracy for speed.
\end{description}

\section{Structural analysis and theory}\label{sec:theory}

This section develops the structural properties on which the algorithms rest. We begin in \S\ref{sec:det-case} with the deterministic special case, which is itself part of the structural analysis and isolates the parametric geometry from the stochastic complications. The remaining results form a single logical chain: parametric-LP critical regions give the local action sensitivity; forward propagation gives the IPA gradient; measure-zero basis changes are the only obstructions; a CRN decomposition quantifies the variance-reduction loss at those obstructions; feedback amplification identifies when IPA loses to finite differences; and the error budget characterizes when surrogate decisions are safe. Each subsection is annotated with the difficulty it addresses.

\subsection{The deterministic case: an analytic baseline}\label{sec:det-case}

We start with the deterministic special case, which serves as the analytic baseline. When the exogenous process $\xi$ is a known sequence (degenerate variance), the outer objective $J(\theta)=L(\theta,\xi)$ is a deterministic parametric program and simulation is unnecessary. This case is not merely academic---it appears in practice when a planner evaluates a policy against \emph{historical} demand (a single realized trajectory) or when the LP right-hand side is a known function of the state and $\theta$. In this zero-variance limit the three stochastic mechanisms on which the rest of the theory rests---gradient estimation, variance reduction, and error control---each collapse to a trivial form: the IPA gradient becomes exact, the common-random-numbers apparatus becomes vacuous, and the surrogate error becomes computable rather than estimated. In this zero-variance limit the deterministic baseline isolates the parametric geometry---kinks, critical regions, and the surrogate-versus-exact tradeoff---from the sampling structure that motivates the rest of the theory.

\subsection{Critical-region structure and IPA unbiasedness}\label{sec:ipa}

By parametric-LP theory \citep{gal1995,bazaraa2010}, the $(s,\theta)$-space is partitioned into critical regions $\{P_k\}$ on each of which the optimal basis $B_k$ is constant and the primal is the affine map $a^{*}=B_k^{-1}h_k$. The local sensitivity
\begin{equation}\label{eq:lpsens}
\partial a^{*}/\partial\theta=B_k^{-1}\,\partial h_k/\partial\theta
\end{equation}
is one matrix--vector product. Forward-propagating the state sensitivity $S_{i,t}:=\partial s_{i,t}/\partial\theta$ via the chain rule gives the IPA estimator $\widehat{J}'_{\mathrm{IPA}}(\theta,\xi)$ from the forward-sensitivity recursions in Algorithm~\ref{alg:framework} (Section~\ref{sec:algo}).

\begin{theorem}[IPA unbiasedness]\label{thm:ipa}
Under (A1)--(A5), for every $\theta\in\Theta^{\circ}$, $\mathbb{E}_\xi[\widehat{J}'_{\mathrm{IPA}}]=\nabla J(\theta)$.
\end{theorem}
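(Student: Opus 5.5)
The plan is the standard IPA template: show that the sample-path cost $L(\theta,\xi)=\sum_{t,i}\pi_i(a^{*}_{i,t},s_{i,t},\theta)$ is almost surely differentiable at $\theta$, that its pathwise derivative coincides with $\widehat{J}'_{\mathrm{IPA}}(\theta,\xi)$, and that differentiation and expectation can be interchanged.

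\emph{Step 1 (pathwise identification).} Fix $\theta\in\Theta^{\circ}$. By (A3), for $\mathbb{P}$-almost every $\xi$, every pair $(s_{i,t},\theta)$ along the orbit lies in the interior of a single critical region $P_{k(i,t)}$. Since there are only finitely many epochs and agents, the orbit stays inside those same regions for all $\theta'$ in some neighbourhood $U_\xi$ of $\theta$; this uses continuity of the orbit in $\theta'$, which I would establish by induction on $t$. On $U_\xi$ each action is therefore the affine map $a^{*}_{i,t}=B_{k}^{-1}h_k(s_{i,t},\theta')$, with $h$ affine by (A1). The transitions $\phi_i$ and rewards $\pi_i$ are $C^1$ by (A4). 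So $\theta'\mapsto L(\theta',\xi)$ is a finite composition of $C^1$ maps on $U_\xi$. The chain rule, applied in time order, gives exactly the forward recursion
\[
S_{i,t+1}=\partial_s\phi_i\,S_t+\partial_a\phi_i\,B_k^{-1}\bigl(\partial_s h_k\,S_{i,t}+\partial_\theta h_k\bigr)+\partial_\theta\phi_i ,
\]
which Algorithm~\ref{alg:framework} implements together with the induced reward derivative. Hence $\nabla_\theta L(\theta,\xi)=\widehat{J}'_{\mathrm{IPA}}(\theta,\xi)$ almost surely. By (A2) the optimal basis is unique, so the estimator is well defined.

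\emph{Step 2 (interchange).} I would aim for a Lipschitz-in-$\theta$ bound $|L(\theta_1,\xi)-L(\theta_2,\xi)|\le K(\xi)\|\theta_1-\theta_2\|$ with $\mathbb{E}K<\infty$. Dominated convergence applied to the difference quotients then gives $\nabla J=\mathbb{E}\nabla L$.

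The key observation is that the parametric LP solution map $(s,\theta)\mapsto a^{*}$ is continuous and piecewise affine, drawing on only finitely many bases. It is therefore globally Lipschitz with constant $\max_k\|B_k^{-1}\|\,\|\nabla h\|$. This is presumably the ``linearity'' reduction of Remark~\ref{rmk:poly}. Combining this with the bounded derivatives of $\phi_i$ and $\pi_i$ from (A4), the state sensitivities satisfy $\|S_t\|\le C^t$ for a deterministic constant $C$. Since the horizon is finite, this already yields an integrable, in fact deterministic, Lipschitz modulus.

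If $\pi_i$ has only locally bounded derivatives, for instance polynomial growth in $s$, I would instead use $K(\xi)\lesssim\sum\|s_{i,t}\|+1$ and invoke (A5) together with Cauchy--Schwarz to obtain integrability.

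\emph{Main obstacle.} The delicate point is that the Lipschitz bound must hold along entire segments $[\theta_1,\theta_2]$, which pass through kinks where the basis changes. The pathwise derivative only exists off those kinks. I would handle this with continuity of the piecewise-affine solution map across critical-region boundaries, so that no jumps occur there. This continuity requires (A2) uniqueness, so I would check that the uniqueness holds on the whole of $\Theta$ and not only on $\Theta^{\circ}$. If it does not, I would fall back to establishing a.s. Lipschitzness directly via the measure-zero kink set of (A3).
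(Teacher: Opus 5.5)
Your skeleton is the paper's own: pathwise differentiability on the probability-one event that the orbit stays in region interiors, a deterministic bound on the sensitivities from the finitely many bases and the finite horizon, (A5) used only to make $L$ integrable, then dominated convergence.

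\textbf{The failing step.} Your ``key observation'' is that $(s,\theta)\mapsto a^{*}$ is continuous piecewise affine and hence globally Lipschitz. That holds for a right-hand-side parametric LP whose cost vector is fixed and dual nondegenerate. There, basis changes are primal-degenerate pivots: the basis is not unique on the boundary, but the optimal \emph{solution} is, and it moves continuously. The model, however, lets $c=c(s,\theta)$ vary (it is one of the data in (A4)). When the objective rotates past an edge, the optimal vertex jumps. This is exactly the $v_2\to v_3$ switch of Figure~\ref{fig:basis_change} and the reward jumps $\bar\Delta$ of Theorem~\ref{thm:crn}.

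Neither of your repairs works. (A2) uniqueness cannot hold on all of $\Theta$, because a point on a critical-region boundary has two optimal bases by definition, and at a dual-degenerate boundary a whole optimal edge. The fallback via (A3) gives differentiability at each fixed $\theta$ almost surely, not continuity along segments. A jump at a $\xi$-dependent location therefore contributes to $\nabla J$ a term that no pathwise derivative sees.

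\textbf{Counterexample.} Take $T=2$, $s_0=0$, $s_1=\xi_0\sim\mathrm{U}[1,2]$, inner LP $\max\,\theta a_1+s\,a_2$ s.t.\ $a_1+a_2\le1$, $a\ge0$, and reward $\pi=a_1$. The data are $C^1$ with bounded derivatives, and the basis is unique off the null event $\{\xi_0=\theta\}$. The switching residual $\theta-\xi_0$ has $0$ as a regular value, and $(1,2)\subseteq\Theta^{\circ}$. Yet on $(1,2)$ we have $L(\theta,\xi)=1+\mathbf{1}\{\xi_0<\theta\}$, so $\widehat{J}'_{\mathrm{IPA}}=0$ a.s.\ while $J'(\theta)=1$.

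\textbf{The paper's proof has the same hole, unflagged.} Its Step~5 bounds the difference quotient by $\sup_{\theta'\in[\theta,\theta+h]}|\partial L/\partial\theta(\theta',\xi)|$ via the mean value theorem. That presumes differentiability on the whole segment, which holds only for $h$ below a $\xi$-dependent distance to the nearest kink. A bound that holds only eventually in $h$ for each $\xi$ is not a dominating function. In the example the quotient is $\mathbf{1}\{\theta\le\xi_0<\theta+h\}/h$, with mean $1$ for every small $h$ and a.s.\ limit $0$. So you have located the real crux, which the paper passes over, but it cannot be closed under (A1)--(A5) alone.

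\textbf{What closes it.} You need a hypothesis that makes the sample path continuous in $\theta$, for instance that $s$ and $\theta$ enter the inner LP only through $h$, with $c$ constant and dual nondegenerate. Then the relevant uniqueness is of the optimal solution, and it holds for every right-hand side. Under that hypothesis your argument is complete and cleaner than the paper's. The map $a^{*}$ is continuous piecewise affine, hence globally Lipschitz, and composing it with the globally Lipschitz $\phi_i$ and $\pi_i$ gives a deterministic Lipschitz modulus for $L(\cdot,\xi)$, with no differentiability needed along the segment. Without such a hypothesis, the estimator must be augmented with the jump contributions, for example through a smoothed-perturbation-analysis or likelihood-ratio correction.
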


The assumptions are mild: (A1)--(A4) are standard in LP and simulation theory, and (A5) is verified per testbed in \S\ref{sec:expt-design}. The key point is that Theorem~\ref{thm:ipa} reads the gradient from the LP basis at no additional simulation cost, directly resolving the wasted-structure difficulty (D2): whereas finite differences require two replications and introduce $O(h^2)$ bias, IPA provides an exact, single-replication gradient from the shadow price that the LP solver already computes.

Figure~\ref{fig:fd_vs_ipa} contrasts the FD and IPA approaches. FD treats the simulation as a black box, requiring two replications and producing a noisy, $O(h^2)$-biased estimate. IPA reads the shadow price from the LP solver---at no additional cost---and propagates this sensitivity forward through the dynamics to obtain an exact, single-replication gradient.

\begin{figure}[H]
\centering
\begin{tikzpicture}[>=Stealth, font=\small, scale=0.9, transform shape,
  box/.style={draw, minimum height=0.75cm, minimum width=1.4cm, align=center, fill=blue!8, thick},
  simbox/.style={draw, minimum height=0.95cm, minimum width=2.0cm, align=center, fill=blue!14, thick},
  lpbox/.style={draw, minimum height=0.95cm, minimum width=2.3cm, align=center, fill=green!15, thick},
  gradbox/.style={draw, minimum height=0.75cm, minimum width=2.3cm, align=center, fill=green!25, thick},
  opbox/.style={draw, minimum height=0.75cm, minimum width=2.4cm, align=center, fill=orange!15, thick},
  blackbox/.style={draw, dashed, thick, gray, minimum height=1.5cm, minimum width=4.5cm},
  arr/.style={->, thick, >=Stealth},
  labelbad/.style={font=\scriptsize\bfseries, text=white, fill=red!70!black, inner sep=2.5pt},
  labelgood/.style={font=\scriptsize\bfseries, text=white, fill=green!60!black, inner sep=2.5pt}
]
\draw[gray!40, very thick, dashed] (2.2,-2.1) -- (2.2,4.3);

\node[font=\small\bfseries, text=blue!70!black] at (-1.2, 4.1) {Finite Differences};
\node[box] (th) at (-2.3, 3.1) {$\theta{-}h$};
\node[box] (thh) at (-0.1, 3.1) {$\theta{+}h$};
\node[simbox] (sim1) at (-2.3, 1.75) {Simulation\\run 1};
\node[simbox] (sim2) at (-0.1, 1.75) {Simulation\\run 2};
\node[blackbox] at (-1.2, 1.75) {};
\node[font=\scriptsize, text=black, rotate=90] at (-3.6, 1.75) {black box};
\node (l1) at (-2.3, 0.5) {$L(\theta{-}h)$};
\node (l2) at (-0.1, 0.5) {$L(\theta{+}h)$};
\node[opbox] (diff) at (-1.2, -0.7) {$\frac{L(\theta{+}h)-L(\theta{-}h)}{2h}$};
\node[labelbad] at (-1.2, -1.7) {noisy \;\textbullet\; $O(h^2)$ bias \;\textbullet\; $2\times$ cost};

\draw[arr] (th) -- (sim1);
\draw[arr] (thh) -- (sim2);
\draw[arr] (sim1) -- (l1);
\draw[arr] (sim2) -- (l2);
\draw[arr] (l1) -- (diff);
\draw[arr] (l2) -- (diff);

\node[font=\small\bfseries, text=green!50!black] at (6.2, 4.1) {IPA (exploits LP geometry)};
\node[box] (thi) at (5.1, 3.1) {$\theta$};
\node[simbox] (simi) at (5.1, 1.75) {Single\\simulation run};
\node[lpbox] (lp) at (5.1, 0.25) {LP solver $\rightarrow$ dual $\lambda$};
\node[gradbox] (grad) at (5.1, -1.1) {$\widehat{J}'_{\mathrm{IPA}}$};
\node[labelgood] at (5.1, -2.0) {exact \;\textbullet\; unbiased \;\textbullet\; $1\times$ cost};
\node[font=\scriptsize, text=green!50!black, align=left] at (8.0, 0.25) {$\lambda = B_k^{-1}\partial h/\partial\theta$};

\draw[arr] (thi) -- (simi);
\draw[arr] (simi) -- (lp);
\draw[arr] (lp) -- (grad);
\draw[arr, green!50!black, thick] (lp.east) -- (7.1,0.25);
\end{tikzpicture}
\caption{Conceptual comparison of gradient estimation for the SOSO problem. \emph{Left:} Finite differences treat the simulation as a black box, run it twice at $\theta{-}h$ and $\theta{+}h$, and difference the outputs---producing a noisy, $O(h^2)$-biased estimate at twice the cost. \emph{Right:} IPA runs the simulation once and reads the LP's dual variable (shadow price $\lambda=B_k^{-1}\partial h/\partial\theta$), then propagates the sensitivity forward to obtain an exact, unbiased gradient at the cost of one replication.}
\label{fig:fd_vs_ipa}
\end{figure}

Figure~\ref{fig:basis_change} illustrates the critical-region structure using a 2-variable LP. The feasible region is a convex polytope; as $\theta$ varies, the objective direction rotates, and the optimal vertex moves along the hull. At a vertex switch (edge crossing), the basis changes---this is the kink that Theorem~\ref{thm:kink} identifies as a measure-zero event.

\begin{figure}[H]
\centering
\begin{tikzpicture}[>=Stealth, font=\small, scale=0.8, transform shape]

\fill[gray!10] (1,1) -- (5,1) -- (5.5,3) -- (3,4.5) -- (1,3) -- cycle;
\draw[very thick, gray!70!black] (1,1) -- (5,1) -- (5.5,3) -- (3,4.5) -- (1,3) -- cycle;

\foreach \x/\y in {1/1, 5/1, 5.5/3, 3/4.5, 1/3} {
  \fill[gray!50!black] (\x,\y) circle (3pt);
}
\fill[blue!70!black]  (5,1) circle (4pt);
\fill[green!60!black] (5.5,3) circle (4pt);
\node[below left=0.8mm and 0.8mm, font=\scriptsize\bfseries] at (1,1) {$v_1$};
\node[below=1.5mm, font=\scriptsize\bfseries, align=center] at (5,1) {$v_2$ (basis $B_1$)};
\node[right=2.0mm, font=\scriptsize\bfseries, align=center] at (5.5,3) {$v_3$ (basis $B_2$)};
\node[above=1mm, font=\scriptsize\bfseries] at (3,4.5) {$v_4$};
\node[left=1mm, font=\scriptsize\bfseries] at (1,3) {$v_5$};

\draw[->, >=Stealth, line width=2pt, red, dashed] (5,1) -- (5.5,3);
\node[red!80!black, font=\scriptsize\bfseries, fill=white, inner sep=2pt, align=center] at (6.6,2.1) {basis change\\$B_1\to B_2$};

\draw[->, thick] (0,0) -- (7,0) node[right] {$x_1$};
\draw[->, thick] (0,0) -- (0,5) node[above] {$x_2$};
\node[font=\small\bfseries, text=gray!45!black] at (3,5.4) {action space $x$};

\draw[dashed, thick, gray!55] (7.7,-0.6) -- (7.7,5.7);

\fill[blue!14]  (8.2,0.5) rectangle (11.1,4.7);
\fill[green!16] (11.1,0.5) rectangle (14.0,4.7);
\draw[thick, gray!70!black] (8.2,0.5) rectangle (14.0,4.7);
\draw[line width=1.5pt, red, dashed] (11.1,0.5) -- (11.1,4.7);
\fill[blue!70!black] (9.6,2.6) circle (4pt);
\node[font=\small, blue!70!black] at (9.6,2.1) {$\theta_1$};
\fill[green!60!black] (12.6,2.6) circle (4pt);
\node[font=\small, green!60!black] at (12.6,2.1) {$\theta_2$};
\node[font=\small, blue!70!black, align=center] at (9.6,3.9) {critical region $P_1$\\(basis $B_1$)};
\node[font=\small, green!60!black, align=center] at (12.6,3.9) {critical region $P_2$\\(basis $B_2$)};
\node[font=\small, red!80!black] at (11.1,0.15) {$\partial P_k$};
\node[font=\small\bfseries, text=gray!45!black] at (11.1,5.4) {policy space $\Theta$};
\end{tikzpicture}
\caption{Critical-region structure of a 2-variable parametric LP, shown in two spaces separated by the dashed divider. \emph{Left (action space $x$):} the feasible polytope has vertices $v_1$--$v_5$; as $\theta$ varies the optimum switches from $v_2$ (basis $B_1$, blue) to $v_3$ (basis $B_2$, green) along the dashed red edge---the basis change, a measure-zero event (Theorem~\ref{thm:kink}). \emph{Right (policy space $\Theta$):} the same switch as two critical regions $P_1$/$P_2$ separated by $\partial P_k$, with $\theta_1\in P_1$ and $\theta_2\in P_2$. The IPA derivative is exact within each region and one-sided at the boundary, where the basis-disagreement count $\kappa$ detects the transition.}\label{fig:basis_change}
\end{figure}

\begin{remark}[Polynomial simplification]\label{rmk:poly}
For LP-inner models, $L(\theta,\xi)$ is piecewise-polynomial in $\theta$ within each critical region, so $\mathbb{E}[\partial L/\partial\theta] =\partial\mathbb{E}[L]/\partial\theta$ reduces to linearity of expectation for polynomials---no measure-theoretic dominated convergence theorem (DCT) is required.
\end{remark}

\subsection{Detectable, measure-zero critical events}\label{sec:kink}

\begin{theorem}[Critical events are measure-zero]\label{thm:kink}
Under (A2)--(A4), $\Theta\setminus\Theta^{\circ}$ has Lebesgue measure zero.
\end{theorem}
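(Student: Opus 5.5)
The plan is to characterize the exceptional set $\Theta\setminus\Theta^{\circ}$ as the set of policies at which the orbit touches a critical-region boundary with positive probability. We then show that this set is contained in a countable union of null sets. Fix a realization $\xi$.

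Under (A1) and (A4), the forward orbit $(s_t(\theta,\xi))_t$ is built recursively. Each step composes the $C^1$ transition $\phi_i$ with the inner LP solution. Inside a critical region $P_k$ that solution is the affine map $a^*=B_k^{-1}h_k(s,\theta)$. Hence, for each finite \emph{basis sequence} $\sigma=(k_{i,t})_{i,t}$, the orbit that follows $\sigma$ is a $C^1$ function $s^\sigma_t(\theta,\xi)$ of $(\theta,\xi)$. There are finitely many bases, so there are finitely many sequences $\sigma$, because $N$ and $T$ are finite.

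A kink occurs at $(i,t)$ exactly when some basis-change residual vanishes. This happens when a primal basic component $(B_k^{-1}h_k)_j$ or a reduced cost reaches zero at the state $s^\sigma_{i,t}(\theta,\xi)$. Define $r^\sigma_{i,t,j}(\theta,\xi)$ as the residual composed with the orbit. Each $r^\sigma_{i,t,j}$ is $C^1$ in $(\theta,\xi)$.

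The core step uses the transversality part of (A2). Since $0$ is a regular value of each $r^\sigma_{i,t,j}$, the regular value theorem makes the zero set $Z^\sigma_{i,t,j}=\{(\theta,\xi): r^\sigma_{i,t,j}=0\}$ a $C^1$ hypersurface of codimension one in $\Theta\times\Xi$. Such a hypersurface has product-measure zero, where the measure is Lebesgue on $\Theta$ times $\mathbb{P}$. This requires $\mathbb{P}$ to have a density in the directions in which the residual actually varies; if needed, I would argue this directly by noting that codimension-one $C^1$ submanifolds are Lebesgue-null. The bad set $Z=\bigcup_{\sigma,i,t,j}Z^\sigma_{i,t,j}$ is a finite union of null sets, so it is null.

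We then apply Fubini--Tonelli to $\mathbf{1}_Z$:
\[
0=\int_\Theta \mathbb{P}\bigl(\xi:(\theta,\xi)\in Z\bigr)\,d\theta .
\]
It follows that for Lebesgue-a.e.\ $\theta$, the orbit avoids every residual zero $\mathbb{P}$-a.s. For such $\theta$, the orbit stays in the interiors of its critical regions almost surely. This is the defining property of $\Theta^{\circ}$ in (A3), so $\Theta\setminus\Theta^{\circ}$ is contained in the Lebesgue-null set $\{\theta:\mathbb{P}((\theta,\xi)\in Z)>0\}$.

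The main obstacle is the regular-value step. Transversality for the raw residual map on $(s,\theta)$-space does not by itself give transversality after composition with the orbit map, because the orbit map might collapse directions. I would first try to obtain it directly from (A2), read as regularity of the \emph{composed} residual in $(\theta,\xi)$. Failing that, I would add a mild condition that $\partial s_{i,t}/\partial\xi_{t-1}$ has full rank, and use the smoothness bounds of (A4), so that the noise injected at $t-1$ moves the state transversally to each boundary.

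A second subtlety is that the orbit is only piecewise $C^1$ across branches. Enumerating the finitely many basis sequences $\sigma$ and working branch by branch handles this, because each actual orbit coincides with some $s^\sigma$ up to its first kink.
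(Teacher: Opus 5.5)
There is a genuine gap at the step ``such a hypersurface has product-measure zero.'' Knowing that $0$ is a regular value of $r^\sigma_{i,t,j}$ as a function of $(\theta,\xi)$ jointly does not make its zero set null for $\mathrm{Leb}\otimes\mathbb{P}$. Suppose $\mathbb{P}$ has an atom at $\xi_0$ and some slack does not move with $\theta$, say $r(\theta,\xi)=\xi-\xi_0$. Then $0$ is a regular value of $r$ on $\Theta\times\mathbb{R}$. Yet the zero set contains $\Theta\times\{\xi_0\}$, which has product measure $\mathbb{P}(\{\xi_0\})\,\mathrm{Leb}(\Theta)>0$, so every $\theta$ is exceptional.

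Your fallback does not repair this. A codimension-one $C^1$ submanifold is null for Lebesgue measure on $\Theta\times\Xi$, but that carries over to $\mathrm{Leb}\otimes\mathbb{P}$ only if $\mathbb{P}\ll\mathrm{Leb}$. Likewise, the full-rank condition on $\partial s_{i,t}/\partial\xi_{t-1}$ helps only when the noise has a density. Neither condition is in (A2)--(A4), and both fail for the discrete demand laws of several testbeds, where $\xi$-derivatives are not even defined.

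The repair is to take transversality in the $\theta$-direction, one noise slice at a time; this is how the paper reads (A2). That is, for $\mathbb{P}$-a.e.\ $\xi$, $0$ is a regular value of each $\theta\mapsto r^\sigma_{i,t,j}(\theta,\xi)$, the residual already composed with the orbit. This reading also removes your ``orbit map may collapse directions'' obstacle, though by assumption rather than derivation. Each slice $Z_\xi=\{\theta:(\theta,\xi)\in Z\}$ is then a finite union of $(d-1)$-dimensional $C^1$ submanifolds (isolated points when $d=1$), hence Lebesgue-null. Tonelli with the $\theta$-integral inside gives $(\mathrm{Leb}\otimes\mathbb{P})(Z)=\int\mathrm{Leb}(Z_\xi)\,d\mathbb{P}(\xi)=0$, with no density and no rank condition, and your displayed Fubini identity finishes the argument.

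With this one change your proposal is actually tighter than the paper's proof, in three ways:
\begin{itemize}
\item The paper stacks all residuals into $f_\xi:\Theta\to\mathbb{R}^M$ and reasons about $f_\xi^{-1}(\{0\})$ via $d\le M$. That preimage is where all residuals vanish simultaneously, whereas the kink set is the union of single-residual zero sets, exactly as you set it up.
\item Your enumeration of basis sequences handles the fact that residuals along the realized orbit are only piecewise $C^1$.
\item Your Fubini step supplies the passage from per-$\xi$ null sets to the ``almost surely, for a.e.\ $\theta$'' property that defines $\Theta^{\circ}$, which the paper leaves implicit.
\end{itemize}
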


Because $0$ is a regular value of the residual map $f_\xi$ by (A2), the \emph{regular value theorem} (a corollary of the implicit function theorem) gives that $f_\xi^{-1}(\{0\})$ is a smooth $(d{-}M)$-submanifold of $\Theta$. Since $d\le M$ in our setting (a low-dimensional policy parameter and many residual components), this submanifold has dimension at most zero and hence Lebesgue measure zero. Only $C^1$ regularity---already guaranteed by (A4)---is required. (For the atypical case $d>M$, one would instead invoke Sard's theorem to control the set of critical values, but the measure-zero conclusion for a fixed regular value $0$ follows already from (A2).) Theorem~\ref{thm:kink} thus ensures IPA is unbiased except on a negligible set, and because basis changes are detected online by comparing active-row signatures, this naturally leads to the CRN diagnostic of Theorem~\ref{thm:crn}.

\subsection{CRN covariance decomposition}\label{sec:crn}

For paired differences $D=L(\theta_a,\xi)-L(\theta_b,\xi)$ with CRN:

\begin{theorem}[CRN covariance decomposition]\label{thm:crn}
$\mathrm{Var}(D)\le 2\bigl(\mathrm{Var}(D)|_{\kappa=0}+\bar{\Delta}^2\,\mathbb{E}[\kappa^2]\bigr)$, where $\kappa$ counts basis-disagreement cells and $\bar{\Delta}$ bounds reward jumps. The universal factor $2$ absorbs the smooth/kink cross-covariance and does not affect the qualitative conclusion: CRN helps within a region ($\kappa=0$) and degrades with $\kappa$.
\end{theorem}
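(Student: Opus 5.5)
The plan is to split the paired difference into a ``smooth'' part and a ``kink'' part and then apply the elementary inequality $\mathrm{Var}(X+Y)\le 2\mathrm{Var}(X)+2\mathrm{Var}(Y)$, which is where the universal factor $2$ comes from. Fix $\theta_a,\theta_b$ and a common noise path $\xi$. Run the two forward simulations under CRN and record, at every cell $(i,t)$, the active-row signature of the inner LP~\eqref{eq:innerlp} at $\theta_a$ and at $\theta_b$. Let $\kappa(\xi)$ be the number of cells whose signatures differ.

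The first step is to construct a comparison path $\tilde L(\theta_b,\xi)$. On each disagreement cell it evaluates the action with the basis used at $\theta_a$, that is $a=B_k^{-1}h_k$ evaluated at $(s_{i,t},\theta_b)$, which is the affine continuation from~\eqref{eq:lpsens}. On all other cells it uses the true optimal action. We then write
\[
D=\underbrace{L(\theta_a,\xi)-\tilde L(\theta_b,\xi)}_{D_{\mathrm{s}}}+\underbrace{\tilde L(\theta_b,\xi)-L(\theta_b,\xi)}_{D_{\mathrm{k}}}.
\]
By construction $D_{\mathrm{s}}$ is a paired difference along a path that stays in a single critical region at every cell. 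Its law is therefore that of the within-region difference, and we identify $\mathrm{Var}(D_{\mathrm{s}})$ with the quantity $\mathrm{Var}(D)|_{\kappa=0}$ in the statement. Making this identification precise is a definitional point: we read $\mathrm{Var}(D)|_{\kappa=0}$ as the variance of the basis-frozen paired difference.

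The second step bounds the kink part by $|D_{\mathrm{k}}|\le\bar\Delta\,\kappa$. We interpret $\bar\Delta$ as a uniform bound on the change in the cumulative reward $\sum\pi_i$ caused by swapping the action at a single cell between two bases. Such a bound exists by (A4), since $\pi_i$ and $\phi_i$ have bounded derivatives, and because the action set is bounded on a bounded state space, with (A5) controlling the moments. Replacing the cells one at a time and telescoping gives the bound. It then follows that $\mathrm{Var}(D_{\mathrm{k}})\le\mathbb{E}[D_{\mathrm{k}}^2]\le\bar\Delta^2\,\mathbb{E}[\kappa^2]$. Combining this with $\mathrm{Var}(D_{\mathrm{s}}+D_{\mathrm{k}})\le 2\mathrm{Var}(D_{\mathrm{s}})+2\mathrm{Var}(D_{\mathrm{k}})$ yields the claimed inequality. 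The cross-covariance is absorbed by the factor $2$ through Cauchy--Schwarz and AM--GM.

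The main obstacle is the second step in coupled systems. A swapped action at one cell changes the state downstream, so the later rewards are also perturbed, and a per-cell jump bound is not automatically uniform in $T$. The first remedy to try is to let $\bar\Delta$ denote the bound on the \emph{total} downstream effect of one swap. This is finite because $T$ is finite and the derivatives are bounded by (A4), but it may carry the feedback gain of Theorem~\ref{thm:var}. The second remedy is to define $\tilde L$ so that swapped cells do not propagate at all, charging their effect directly to the kink term. Either choice keeps the statement's form, with the constants hidden in $\bar\Delta$. The qualitative conclusion, that CRN is effective when $\kappa=0$ and degrades as $\kappa$ grows, follows immediately from the resulting bound.
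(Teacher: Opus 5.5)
Your proposal is correct and follows essentially the same route as the paper: split $D$ into a within-region part and a kink part with $|D^{\mathrm{kk}}|\le\bar\Delta\,\kappa$, bound $\mathrm{Var}(D^{\mathrm{kk}})$ by its second moment, and combine via $\mathrm{Var}(X+Y)\le 2\,\mathrm{Var}(X)+2\,\mathrm{Var}(Y)$. Like the paper, which sets $D^{\mathrm{sm}}:=D-D^{\mathrm{kk}}=D|_{\kappa=0}$, you read $\mathrm{Var}(D)|_{\kappa=0}$ by definition as the variance of the smooth remainder. Your basis-frozen comparison path in its non-propagating form (your second remedy) is an explicit construction of the paper's surviving-jump term, with a per-cell immediate-reward bound of the same type as the paper's $\bar\Delta=L_\pi\operatorname{diam}(\mathcal{A})$ (finitely many $B_k^{-1}$, bounded $h$); the downstream-propagation issue you raise for coupled systems is one the paper's proof passes over.
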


The only assumption beyond (A4) is that $\bar{\Delta}$ is finite, which follows from the reward's Lipschitz continuity and the bounded primal diameter---both consequences of (A1) and (A4). The bound is stated in the finite-second-moment regime $\mathrm{Var}(D)<\infty$ (the case of practical interest; when $\mathrm{Var}(D)=\infty$ the inequality is vacuous), which is the natural condition for the factor-$2$ combining step $\mathrm{Var}(X{+}Y)\le 2\mathrm{Var}(X)+2\mathrm{Var}(Y)$. No additional structure is needed. In practice $\kappa$ is computed by running the two CRN-paired forward solves at $\theta_a$ and $\theta_b$, recording each agent's active-row signature (the set of tight constraints, equivalently the optimal basis) at each $(i,t)$, and counting the cells where the two signatures differ; it is therefore a free by-product of the two solves rather than an extra estimation step. This $\kappa$ diagnostic drives a kink-aware replication allocation (Algorithm~\ref{alg:framework}) that trims the worst-case confidence-interval half-width at fixed budget (Table~\ref{tab:kink-aware}), directly attacking the nested-cost difficulty by making each replication more informative.

\subsection{Variance amplification through feedback}\label{sec:var}

\begin{theorem}[Variance amplification]\label{thm:var}
If the state-sensitivity recursion $S_{t+1}=A_tS_t+b_t$ has per-stage gain $\|A_t\|\le g$ with $g>1$ over $m$ feedback stages, with non-cancelling coherent gains (the scalar case $A_t\ge g>0$ suffices) and integrable shocks, then the IPA-gradient second moment satisfies $\mathbb{E}_\xi|\widehat{J}'_{\mathrm{IPA}}|^2=\Theta(g^{2m})$ in the feedback depth $m$; consequently $\mathrm{Var}[\widehat{J}'_{\mathrm{IPA}}]=\Theta(g^{2m})$ whenever the mean $\nabla J(\theta)$ does not cancel it (e.g., away from a stationary point of $J$). The CRN-FD variance grows only polynomially in $m$.
\end{theorem}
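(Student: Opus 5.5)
Unroll the sensitivity recursion and bound its second moment from both sides; a separate step handles the finite-difference claim. Iterating $S_{t+1}=A_tS_t+b_t$ over the $m$ feedback stages gives
\[
S_m=\Bigl(\prod_{t=0}^{m-1}A_t\Bigr)S_0+\sum_{k=0}^{m-1}\Bigl(\prod_{t=k+1}^{m-1}A_t\Bigr)b_k .
\]
The IPA estimator from Algorithm~\ref{alg:framework} is a linear functional of these state sensitivities. Its weights are the reward partials $\partial\pi_i/\partial s$ and $\partial\pi_i/\partial a$ together with the LP sensitivity \eqref{eq:lpsens}, all bounded by (A4) and constant within a critical region. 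So, on $\Theta^{\circ}$, where by Theorem~\ref{thm:kink} the basis is a.s.\ fixed along the orbit, we may write $\widehat{J}'_{\mathrm{IPA}}=\sum_t w_t^{\top}S_t+r$ with bounded $w_t,r$. The proof of $\Theta(g^{2m})$ then splits into an upper bound and a matching lower bound on $\mathbb{E}|\widehat{J}'_{\mathrm{IPA}}|^2$.

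\textbf{Upper bound.} Submultiplicativity gives $\|\prod_{t=k+1}^{m-1}A_t\|\le g^{m-1-k}$. Hence $\|S_m\|\le g^m\|S_0\|+\sum_k g^{m-1-k}\|b_k\|$. Since $g>1$, the geometric sum is at most $g^m/(g-1)$ times $\max_k\|b_k\|$. Squaring, applying Cauchy--Schwarz, and using integrable (square-integrable, via (A5)) shocks gives $\mathbb{E}\|S_m\|^2\le C g^{2m}$. The intermediate $S_t$ are dominated by the same bound, so $\mathbb{E}|\widehat{J}'_{\mathrm{IPA}}|^2=O(g^{2m})$.

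\textbf{Lower bound.} This is where the non-cancelling coherence hypothesis is needed, and I expect it to be the main obstacle. In the scalar case $A_t\ge g>0$ I will take $b_k\ge0$ a.s., or more generally sign-coherent with the weights, so that no two terms in the unrolled sum cancel. Then $S_m\ge g^{m-1}b_0$ pathwise, so $\mathbb{E} S_m^2\ge g^{2(m-1)}\mathbb{E} b_0^2$, and the second moment is $\Omega(g^{2m})$ provided $\mathbb{E} b_0^2>0$ and the terminal weight is nonzero. For the matrix case I would state coherence explicitly: there is a fixed unit vector $u$ with $u^{\top}\prod A_t v\ge g^{m}\,|u^{\top}v|$ on a cone containing the shocks, e.g.\ entrywise-nonnegative $A_t$ with Perron--Frobenius root at least $g$. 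This reduces the matrix case to the scalar argument.

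\textbf{Variance and the finite-difference comparison.} Since $\mathrm{Var}=\mathbb{E}|\cdot|^2-|\nabla J|^2$ and $|\nabla J|^2\le\mathbb{E}|\cdot|^2$ by Jensen, the variance is $\Theta(g^{2m})$ as long as $|\nabla J|^2\le(1-c)\,\mathbb{E}|\widehat{J}'_{\mathrm{IPA}}|^2$ for some $c>0$. This is the stated non-cancellation proviso, e.g.\ near a stationary point, where $\nabla J\approx0$. For the CRN-FD claim, write $D/(2h)$ with common $\xi$ and assume the trajectory stays in bounded state sets, so that the piecewise-linear LP actions are clipped by capacity constraints. The per-stage difference $|s_t(\theta+h)-s_t(\theta-h)|$ is then bounded by a constant times $h$ plus the jump contributions counted by $\kappa$. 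Theorem~\ref{thm:crn} gives $\mathrm{Var}(D)\le2(\mathrm{Var}(D)|_{\kappa=0}+\bar\Delta^2\mathbb{E}\kappa^2)$, and $\kappa\le m$ cells along the chain, so the bound grows like $m^2$. This part I expect to be the weakest: it relies on saturation (bounded primal diameter from (A1)) rather than on the linearized gain, and I would state it explicitly as a polynomial-in-$m$ bound under bounded primal diameter.
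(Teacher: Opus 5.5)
The gap is in the CRN-FD part. The step ``$|s_t(\theta+h)-s_t(\theta-h)|$ is bounded by a constant times $h$'' fails under the theorem's own hypotheses.

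\begin{itemize}
\item On the event $\kappa=0$ the two CRN paths use the same basis in every cell. So $\Delta_t:=s_t(\theta+h)-s_t(\theta-h)$ obeys $\Delta_{t+1}=A_t\Delta_t+2h\,b_t$, exactly when $\phi_i$ is affine and to first order otherwise. This is the sensitivity recursion itself.
\item Hence $\Delta_m\approx 2h\,S_m$ has size of order $h\,g^{m}$. For small $h$, the within-region term $\mathrm{Var}(D)|_{\kappa=0}$ of Theorem~\ref{thm:crn} is therefore of order $h^2g^{2m}$. Bounding only the kink term $\bar\Delta^2\mathbb{E}[\kappa^2]=O(m^2)$ leaves this exponential piece untouched.
\item In fact $D/(2h)\to\widehat{J}'_{\mathrm{IPA}}$ pathwise as $h\to0$ on the no-kink event, while $\mathbb{E}[D]/(2h)\to\nabla J(\theta)$. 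By Fatou's lemma, $\liminf_{h\to0}\mathrm{Var}\bigl(D/(2h)\bigr)\ge\mathrm{Var}(\widehat{J}'_{\mathrm{IPA}})=\Theta(g^{2m})$. So no polynomial-in-$m$ bound on the CRN-FD variance can hold uniformly in $h$.
\end{itemize}

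The claim can only hold at fixed $h$ and needs an extra hypothesis on the cost channel. That is how the paper handles it: its proof simply posits that the cost does not amplify with feedback depth, so $\mathrm{Var}(L)$, and with it the CRN-FD variance, is polynomial in $m$. If you prefer to assume bounded states and bounded primal diameter, argue directly. The per-period rewards are then uniformly bounded, so $|D|\le 2NT\max_i\sup|\pi_i|$ and $\mathrm{Var}\bigl(D/(2h)\bigr)=O\bigl((NT)^2/h^2\bigr)$. This needs neither Theorem~\ref{thm:crn} nor $\kappa$. It is saturation that caps $\Delta_t$ at the state diameter, not a bound of the form $Ch$.

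Your IPA half follows the paper's route: unroll from $S_0=0$, bound above by a submultiplicative geometric sum, bound below by the dominant $j=0$ term. Your explicit sign-coherence of the $b_k$ and of the weights is what actually makes the lower bound work. The paper's remark that the terms $j\ge1$ are ``$O(g^{m-2})$ and thus dominated'' does not exclude cancellation. Together those terms can reach $\frac{g^{m-1}-1}{g-1}\max_j\|b_j\|$, the same order as the $j=0$ term; scalar $A_t\equiv g$ with $b_1=-g\,b_0$ gives $S_2=0$.

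Two repairs are needed in that half:
\begin{itemize}
\item To get $\mathbb{E}\|S_m\|^2\le Cg^{2m}$ with $C$ independent of $m$, apply Cauchy--Schwarz with weights, $\bigl(\sum_k g^{m-1-k}\|b_k\|\bigr)^2\le\frac{g^m}{g-1}\sum_k g^{m-1-k}\|b_k\|^2$, and assume $\sup_k\mathbb{E}\|b_k\|^2<\infty$. Squaring the $\max_k\|b_k\|$ bound instead leaves $\mathbb{E}\max_k\|b_k\|^2$, which grows with $m$; this is where the paper's $\log m$ factor comes from. Also, (A5) controls the states, not the shocks $b_k$.
\item In the matrix case, entrywise nonnegativity with Perron root at least $g$ for each $A_t$ does not make the product grow. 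Alternating $\mathrm{diag}(g,\epsilon)$ and $\mathrm{diag}(\epsilon,g)$ gives products $g\epsilon I$. What works is a common positive vector $u$ with $u^{\top}A_t\ge g\,u^{\top}$ entrywise, together with nonnegative shocks.
\end{itemize}
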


Unlike the previous three theorems, the conditions here are structural rather than mild: the result requires non-cancelling positive gains, as arise naturally from a reactive forecast in supply chains---the bullwhip condition \citep{sterman1989,lee1997,chen2000}. This is precisely the regime where the feedback-amplification difficulty (D3) becomes operative: IPA loses to finite differences because its per-replication variance grows exponentially. The remedy, developed next, is surrogate-as-decision, which dampens the feedback gain.

\subsection{Error budget for surrogate-as-decision}\label{sec:eps}

Replace $a^{*}_{\mathrm{ex}}(s)$ by a surrogate $a^{*}_{\mathrm{su}}(s)$; decision error $\varepsilon=\mathbb{E}_s[\pi(a^{*}_{\mathrm{ex}})-\pi(a^{*}_{\mathrm{su}})]$.

\begin{theorem}[Error-budget characterization]\label{thm:eps}
$\varepsilon\ge0$, and for a static surrogate $\varepsilon=0$ iff the optimal action is a.s.\ state-independent. Agnostic to inner type (LP, MILP, DP/RL).
\end{theorem}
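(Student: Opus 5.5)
The argument is pointwise in the state $s$: establish a per-state nonnegativity inequality and a per-state equality condition, then integrate over the law of $s$. Throughout, $\pi(a^{*}_{\mathrm{ex}}(s))$ denotes the agent's own inner objective evaluated at its exact maximizer, and $a^{*}_{\mathrm{su}}(s)$ is any element of the feasible set $\mathcal{X}(s,\theta)$. The surrogate is assumed feasible; for an LP-horizon or RL surrogate this holds by construction, and otherwise we project. Nothing beyond ``$a^{*}_{\mathrm{ex}}(s)$ is a maximizer of $f(\cdot;s,\theta)$ over $\mathcal{X}(s,\theta)$'' will be used. This is why the conclusion is agnostic to the inner type: LP, MILP, or $Q_i$-maximization under (E2) all give the same relation.

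\emph{Step 1: nonnegativity.} For every $s$, optimality gives the pointwise gap $\delta(s):=\pi(a^{*}_{\mathrm{ex}}(s))-\pi(a^{*}_{\mathrm{su}}(s))\ge 0$. Integrability follows from (A4)--(A5) for LP inners (bounded derivatives plus finite second moments), or from boundedness of $Q_i$ for DP inners. Taking expectations then gives $\varepsilon=\mathbb{E}_s[\delta(s)]\ge0$.

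\emph{Step 2: the ``if'' direction.} A static surrogate is a constant action $\bar a$, typically the solution of the problem with $s$ replaced by a nominal value. Suppose the optimal action is a.s.\ state-independent, i.e.\ $a^{*}_{\mathrm{ex}}(s)=a_0$ a.s. Take the static surrogate to be the best constant, $\bar a=a_0$, which is what the surrogate construction returns when it solves for the constant action maximizing $\mathbb{E}_s[\pi]$. Then $\delta(s)=0$ a.s., so $\varepsilon=0$.

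\emph{Step 3: the ``only if'' direction.} Suppose $\varepsilon=0$. Since $\delta\ge0$ and $\mathbb{E}[\delta]=0$, we get $\delta(s)=0$ a.s., so $\bar a$ attains the optimum for a.e.\ $s$. To upgrade this to $a^{*}_{\mathrm{ex}}(s)=\bar a$ a.s., we need uniqueness of the maximizer. For LP inners this is exactly (A2), a unique optimal basis and hence a unique primal. For DP/RL inners we adopt a fixed tie-breaking rule in the $\arg\max$, identified with $a^{*}_{\mathrm{ex}}$. With uniqueness, $a^{*}_{\mathrm{ex}}(s)=\bar a$ for a.e.\ $s$, i.e.\ the optimal action is a.s.\ state-independent. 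This is the precise meaning of the informal ``the state carries no decision-relevant information.''

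The main obstacle is Step 3 under ties. Without uniqueness, $\varepsilon=0$ only says that $\bar a$ is \emph{an} optimizer a.s., not \emph{the} optimizer. I would handle this by stating the equivalence modulo the optimal set: $\varepsilon=0$ iff there exists a constant action lying in $\arg\max_a f(a;s,\theta)$ for a.e.\ $s$. This reduces to the stated form under (A2) or a selection convention, and I would check that the measurability of $s\mapsto a^{*}_{\mathrm{ex}}(s)$ needed for ``a.s.'' holds. For LP inners it follows from the piecewise-affine critical-region structure of \S\ref{sec:ipa}; for finite MDPs the action set is finite, so it is immediate.
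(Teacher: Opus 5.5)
Your proposal is correct and is essentially the paper's proof: pointwise optimality of $a^{*}_{\mathrm{ex}}$ gives $\delta(s)\ge0$, and a nonnegative integrand with zero mean vanishes a.s., so $\varepsilon=0$ iff the static action is optimal for a.e.\ $s$; agnosticism follows because nothing but optimality is used. You are more careful than the paper, which silently reads ``$a_0$ optimal for a.e.\ $s$'' as ``the argmax is a.s.\ constant'' and leaves implicit that the `if' direction needs the surrogate to be that constant; your (A2)-uniqueness step justifies this reading for LP inners, but a fixed tie-breaking rule is not a substitute for uniqueness (a lowest-index selection can vary with $s$ even when one constant action is optimal in every state), so for DP/RL inners only your ``modulo the optimal set'' form is actually established.
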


The conditions are mild: no assumptions beyond LP optimality are needed. This theorem addresses two difficulties simultaneously. For D4 (error control), it characterizes exactly when an approximate inner decision is safe: the surrogate introduces no outer-optimum shift precisely when the optimal action is state-independent almost surely. Independent, stationary demand is one common way this arises---when the optimal decision depends only on constant demand statistics---but the characterization is state-independence itself, not stationarity per se. For D3 (feedback amplification), a damped surrogate reduces the per-stage gain $g$ in Theorem~\ref{thm:var}'s bound, attenuating the exponential variance growth. The regime $\varepsilon\approx0$ (observed under iid demand in Section~\ref{sec:res-r5}) is a direct and practically important consequence.

\begin{remark}[Outer-optimum shift is bounded by $\varepsilon$]\label{rmk:delta}
A corollary used throughout the paper: if the surrogate's objective error is uniformly bounded, $|J(\theta,a^{*}_{\mathrm{ex}})-J(\theta,a^{*}_{\mathrm{su}})|\le\varepsilon_{\max}$ for all $\theta$, then adopting the surrogate's optimum costs at most $\delta:=J(\theta^{*}_{\mathrm{su}})-J(\theta^{*}_{\mathrm{ex}})\le2\varepsilon_{\max}$ in true objective ($\varepsilon$-suboptimality). When the surrogate is one-sided optimistic---as an LP relaxation of a minimization is, with $J_{\mathrm{su}}\le J_{\mathrm{ex}}$ and $\varepsilon=J_{\mathrm{ex}}-J_{\mathrm{su}}\ge0$---the bound tightens to $\delta\le\varepsilon_{\max}$.
\end{remark}

\section{Algorithms}\label{sec:algo}

The framework exploits three independently switchable components, each tied to a theoretical result and to one or more of the difficulties D1--D4 (Table~\ref{tab:framework}). We call the resulting solver \emph{PRIME} (Primal-IPA with Robust Multi-start \& Ensemble): a stochastic-approximation outer loop whose gradient is the single-replication, dual-induced IPA estimator of Theorem~\ref{thm:ipa}, with adaptive step-size normalisation and a multi-start ensemble that exploits $\kappa$-diagnosed basis-change synchronization to escape suboptimal attractors. Algorithm~\ref{alg:framework} presents the structure: Step~1 runs $K$ independent SA chains from equispaced starts, Step~2 applies the IPA gradient with adaptive stepping round-robin across chains, and both share one replication engine---the \textsc{ForwardSens} sub-procedure. The subsections develop the three components: \textsc{ForwardSens} (\S\ref{sec:alg-ipa}), adaptive stepping (\S\ref{sec:alg-crn}), and the multi-start ensemble (\S\ref{sec:alg-surrogate}).

\begin{table}[H]
\caption{PRIME components. Each component is labeled C1--C3, named, mapped to its theoretical guarantee, and linked to the difficulty(ies) it resolves. The ablation column gives the degraded form used in \S\ref{sec:expt-design} to isolate the component's marginal contribution.}\label{tab:framework}
\small
\centering
\begin{tabular}{@{}llp{0.42\linewidth}ll@{}}
\toprule
Cmp. & Name & Operation / ablation degraded form & Theory & Difficulty \\
\midrule
C1 & IPA gradient & Reads $\nabla J$ from the inner LP basis; one replication per gradient. Ablation $-C1$ replaces it with CRN-FD (2 replications/gradient). & Thms.~\ref{thm:ipa}, \ref{thm:kink} & D1, D2 \\
C2 & Adaptive step size & EWMA-normalised gradient magnitude makes the step scale-invariant. Ablation $-C2$ uses fixed step size. & Thm.~\ref{thm:var} (scale) & D3 \\
C3 & Multi-start ensemble & $K{=}3$ independent IPA chains from equispaced starts escape suboptimal attractors. Ablation $-C3$ uses single chain. & Thms.~\ref{thm:kink}, \ref{thm:crn} (via $\kappa$) & D3 \\
\bottomrule
\end{tabular}
\\[4pt]{\footnotesize\raggedright D4 (error control) is addressed by Theorem~\ref{thm:eps} and the surrogate-as-decision error budget (\S\ref{sec:eps}), a complementary lever of the framework rather than a PRIME component. D1 (nested cost) is resolved by C1's single-replication IPA gradient, which halves the per-gradient simulation cost and eliminates the $O(h^2)$ differencing overhead of black-box FD. D2 (wasted structure) is specific to C1: only the IPA gradient directly exploits the inner LP's basis and dual variables. D3 (feedback amplification) is attacked from two complementary angles: C2 normalises the gradient-scale landscape that feedback amplification creates (\S\ref{sec:alg-crn}), and C3 provides spatial diversification to escape suboptimal attractors that arise in feedback-coupled landscapes (\S\ref{sec:alg-surrogate}).\par}
\end{table}

\begin{algorithm}[H]
\caption{PRIME, the proposed simulation-optimization solver. The three components C1--C3 from Table~\ref{tab:framework} are mapped to the algorithmic steps: step~1 implements C3 (multi-start), the outer loop in step~2 runs C1 (IPA gradient) with C2 (adaptive step).}
\label{alg:framework}
\small
\begin{algorithmic}[1]
\Require Policy box $[\theta_{\mathrm{lo}},\theta_{\mathrm{hi}}]$, budget $B$ (inner solves).
\Ensure Best validated $\hat\theta^{*}$, IPA gradient $\widehat{J}'_{\mathrm{IPA}}$.
\Statex \textbf{Step 1.} Multi-start initialisation \quad(C3; \S\ref{sec:alg-surrogate})
\State Generate $K{=}3$ equispaced initial points $\{\theta^{(1)}_0,\dots,\theta^{(K)}_0\}$ across $[\theta_{\mathrm{lo}},\theta_{\mathrm{hi}}]$. \Comment{C3: multi-start}
\For{each chain $k=1,\dots,K$}
  \State Initialise $\theta^{(k)}\gets\theta^{(k)}_0$;\; $g^{(k)}_{\mathrm{ewma}}\gets0$.
\EndFor
\Statex \textbf{Step 2.} IPA-driven stochastic approximation with adaptive step \quad(C1+C2; \S\ref{sec:alg-ipa}, \S\ref{sec:alg-crn})
\For{round $r=1,\dots,R$}
  \For{each chain $k=1,\dots,K$}
    \State Draw $\xi_k$;\; call \Call{ForwardSens}{$\theta^{(k)},\xi_k$} \Comment{C1: single-rep IPA}
    \State $g(\theta^{(k)})\gets \widehat{J}'_{\mathrm{IPA}}$;\; $n_{\mathrm{solves}}\gets n_{\mathrm{solves}}+1$.
    \State $g^{(k)}_{\mathrm{ewma}}\gets\beta_g\,g^{(k)}_{\mathrm{ewma}}+(1-\beta_g)\,|g(\theta^{(k)})|$ \Comment{C2: adaptive step}
    \State $d_k\gets g(\theta^{(k)})\,/\,(g^{(k)}_{\mathrm{ewma}}+\varepsilon)$
    \State $\theta^{(k)}\gets \mathrm{clip}(\theta^{(k)}-a_r\cdot d_k,\;\theta_{\mathrm{lo}},\theta_{\mathrm{hi}})$
  \EndFor
  \If{$r\bmod r_{\mathrm{val}}=0$}
    \State Validate each $\theta^{(k)}$ with high-precision $F_{\mathrm{true}}$; track $\hat\theta^{*}$.
  \EndIf
\EndFor
\State \Return $\hat\theta^{*}$.
\Statex \rule{0.9\linewidth}{0.4pt}
\Statex \textbf{Sub-procedure} \Call{ForwardSens}{$\theta,\xi$} \quad(C1; \S\ref{sec:alg-ipa}; Theorem~\ref{thm:ipa})
\State $S_{i,0}\gets 0$;\; $L\gets 0$;\; $\widehat{J}'\gets 0$.
\For{$t=0,\dots,T{-}1$}
  \For{$i=1,\dots,N$}
    \State solve \eqref{eq:innerlp}; read $a^{*}_{i,t}$, $B_k^{-1}$, and basis signature.
    \State $\partial_\theta a^{*}_{i,t}\gets B_k^{-1}(\partial_\theta h_k+\partial_s h_k\,S_{i,t})$
    \State $S_{i,t+1}\gets\partial_s\phi_i\,S_{i,t}+\partial_a\phi_i\,\partial_\theta a^{*}_{i,t}+\partial_\theta\phi_i$
    \State $L\mathrel{+}=\pi_i$;\; $\widehat{J}'\mathrel{+}=\partial_a\pi_i\,\partial_\theta a^{*}_{i,t}+\partial_s\pi_i\,S_{i,t}+\partial_\theta\pi_i$
  \EndFor
\EndFor
\State \Return $L$, $\widehat{J}'_{\mathrm{IPA}}$, $\{B_{i,t}(\theta)\}$.
\end{algorithmic}
\end{algorithm}

\subsection{C1: IPA forward sensitivity}\label{sec:alg-ipa}

The \textsc{ForwardSens} sub-procedure of Algorithm~\ref{alg:framework} runs one replication while propagating the state sensitivity $S_{i,t}:=\partial s_{i,t}/\partial\theta$ forward through the dynamics. At each period and agent, the inner LP is solved once (as in a standard simulation), and the basis inverse $B_k^{-1}$ is read from the solution. The action sensitivity $\partial_\theta a^{*}_{i,t}=B_k^{-1}(\partial_\theta h_k+\partial_s h_k\,S_{i,t})$ requires one matrix--vector multiply; the state update $S_{i,t+1}=\partial_s\phi_i\,S_{i,t}+\partial_a\phi_i\,\partial_\theta a^{*}_{i,t} +\partial_\theta\phi_i$ requires three Jacobian--vector products. The total per-period overhead is $O(N\cdot n^2)$, where $n$ is the action dimension---negligible compared to the LP solve itself ($O(n^3)$ or worse). This gives an unbiased gradient from a single replication, halving the per-gradient cost of central FD and eliminating the $O(h^2)$ bias and $\mathrm{Var}(L)/h^2$ variance inflation.

\paragraph{Basis reconstruction from a real solver.}
For inner LPs too large for vertex enumeration, $B_k^{-1}$ is reconstructed from the simplex solver's reported variable basis statuses (Gurobi's \texttt{VBasis} or HiGHS's basis output). The $m$ basic variables select $m$ columns of the equality matrix $[G|I]$ (slacks included), giving $B_k$ by a single $m\times m$ inversion. Model reuse and warm-starting from the previous period's basis cut per-solve cost roughly tenfold, as the basis rarely changes between adjacent periods within a critical region.

\subsection{C2: Adaptive step size}\label{sec:alg-crn}

The gradient-scale landscape is a structural feature of SOSO, not an artifact of stochastic noise. Theorem~\ref{thm:var} shows that in a feedback-coupled system the state-sensitivity recursion $S_{t+1}=A_tS_t+b_t$ amplifies geometrically in the feedback depth $m$, so the IPA gradient magnitude at a given $\theta$ depends on the local per-stage gain $\|A_t\|$, which varies dramatically with $\theta$. The mechanism is specific to SOSO: the inner LP's shadow prices $\lambda=B_k^{-1}\partial h_k/\partial\theta$ are propagated forward through the cross-agent Jacobian $\partial\phi_i/\partial s_{j,t}$---a chain of matrix--vector products that amplifies or attenuates depending on the coupling structure. This creates a \emph{gradient-scale landscape} that has no analogue in black-box SO, where the simulation's sensitivity to $\theta$ is neither computed nor propagated structurally. In the Beer Game the IPA gradient spans four orders of magnitude: near the optimum ($\theta\approx12.4$) small changes in the order-up-to level barely affect upstream costs (the bullwhip is damped), so $\nabla J\approx0$; at the upper boundary ($\theta\approx60$) high inventory levels trigger large reactive orders that amplify through all four echelons, so $\nabla J$ spikes. This landscape is a direct consequence of the three SOSO elements working together---the LP inner (E2) supplies the shadow prices, the agent coupling (E1) propagates them, and the stochastic dynamics (E3) generate the state realisations over which they are averaged.

A fixed-step SA simply cannot navigate this landscape. A step size calibrated for the flat region stalls at the boundaries; one calibrated for the steep region oscillates near the optimum. The adaptive step-size mechanism (C2) resolves this by normalising the raw IPA gradient $g(\theta)=\widehat{J}'_{\mathrm{IPA}}(\theta)$ by an exponentially weighted moving average (EWMA) of its magnitude: $d_k = g(\theta_k) / (|g|_{\mathrm{ewma}} + \varepsilon)$, where $|g|_{\mathrm{ewma},k} = \beta_g |g|_{\mathrm{ewma},k-1} + (1-\beta_g)|g(\theta_k)|$ with decay factor $\beta_g=0.3$. The EWMA is a recursive filter that smooths the sequence of gradient magnitudes by giving exponentially decreasing weight to older observations; the effective memory horizon is approximately $1/(1-\beta_g)\approx1.4$ iterations, so $|g|_{\mathrm{ewma}}$ adapts quickly to the local gradient regime. In flat regions both numerator and denominator are small, so $d_k\approx\pm1$ and the SA takes a full step; in steep regions $|g|_{\mathrm{ewma}}$ lags behind the spike, attenuating $d_k$ and preventing overshoot. The normalisation makes the effective step size approximately $a_k$ regardless of the gradient's scale, converting raw descent direction into uniform terrain progress.

C2 is therefore not a generic SA trick transplanted from the stochastic optimisation literature---it is the mechanism that converts IPA's structurally scaled gradient information into usable descent across the full policy domain, and is as essential to SOSO as the IPA channel itself (C1). The distinction from standard adaptive-step methods (e.g., ADAM, RMSprop in stochastic gradient descent) is instructive: those methods normalise by an estimate of the \emph{noisy} gradient's variance to stabilise against stochastic fluctuation; C2 normalises by the \emph{structural} gradient magnitude to stabilise against the deterministic, SOSO-specific variation in gradient scale induced by LP dual propagation through coupled dynamics. Both use an EWMA, but the quantity being tracked---noise variance versus structural scale---is fundamentally different, and the SOSO variant exploits the problem structure that black-box SO cannot access.

\subsection{C3: Multi-start ensemble}\label{sec:alg-surrogate}

Theorem~\ref{thm:crn} establishes that CRN-paired variance reduction degrades with the basis-disagreement count $\kappa$: within a critical region ($\kappa=0$) CRN eliminates nearly all variance, but across kinks ($\kappa>0$) the basis change desynchronises the paired paths and residual variance persists. In the outer SO loop this manifests as a landscape with suboptimal attractors: single-chain SA can become trapped in regions where the gradient signal vanishes (the Beer Game's $\theta\approx14$--$18$ plateau) or where $\kappa$ is persistently high (TB2's bullwhip-amplified kink crossings).

The multi-start ensemble (C3) addresses this by running $K{=}3$ independent PRIME chains from equispaced initial points across $[\theta_{\mathrm{lo}}, \theta_{\mathrm{hi}}]$. Each chain evolves independently with its own random number generator (RNG) substream (no CRN coupling across chains---the goal is independent exploration, not variance reduction). After the budget is exhausted, the chain with the lowest validated $F_{\mathrm{true}}(\theta)$ is selected. This design exploits the $\kappa$ diagnostic implicitly: the three equispaced starts ensure that at least one chain begins in a low-$\kappa$ critical region near the true optimum ($\theta^{*}\approx12.4$ for TB2), while the other two chains explore the boundaries. Multi-start provides spatial diversification, exploiting Theorem~\ref{thm:kink}'s guarantee that kinks are measure-zero in the policy space (so a fixed set of start points almost surely avoids them) and Theorem~\ref{thm:crn}'s basis-disagreement diagnostic (low $\kappa$ near the optimum means at least one chain lands in a well-behaved region).

\section{Computational experiments}\label{sec:expts}

\subsection{Experimental design overview}\label{sec:expt-design}

The computational study has two goals: (i)~compare the assembled solver against classic simulation-optimization baselines on a representative set of testbeds, and (ii)~verify that each structural component of PRIME is necessary and behaves as the theory predicts. We organize the experiments into a \emph{comparison panel} and an \emph{ablation panel} (Table~\ref{tab:so-baselines}), both driven by the same testbeds and the same seeding protocol.

\paragraph{Comparison solvers and fairness.}
We compare PRIME against five classic SO methods that cover the main gradient/learning strategies in the literature (Table~\ref{tab:so-baselines}):
\begin{itemize}
  \item \textbf{CRN-FD-SA:} central finite-difference stochastic approximation with common random numbers \citep{glasserman1992some}--- the strongest black-box gradient baseline;
  \item \textbf{Independent-FD-SA:} the same central-FD scheme without CRN \citep{kiefer1952stochastic}---the honest no-CRN black-box, included to isolate CRN's variance-reduction contribution;
  \item \textbf{Adaptive SPSA:} simultaneous perturbation stochastic approximation with gradient-normalised step \citep{spall1992multivariate};
  \item \textbf{Stochastic Kriging (SK):} a heteroskedastic Gaussian-process metamodel that surrogates the outer objective $J(\theta)$ \citep{ankenman2010};
  \item \textbf{Knowledge Gradient (KG):} a Bayesian one-step value-of-information policy for sequential sampling \citep{frazier2008knowledge}.
\end{itemize}
All SA-based methods use the same gradient-normalized step-size rule and the same 40-step budget. The Bayesian methods use their own design logic: SK and KG pay an upfront space-filling design (16 replications per design point) and therefore \emph{do not compete on per-gradient solve cost}; they compete on final objective value.

We evaluate solvers on three complementary metrics---final gap (accuracy), solves-to-target (speed), and seed-to-seed consistency (stability)---whose detailed definitions and quantitative comparison are given in Result~1 (\S\ref{sec:res-r1}). Reporting all three prevents cherry-picking.

\begin{table}[H]
\caption{Solvers in the comparison panel. ``Gradient source'' and ``inner solves per gradient/step'' determine which metric each solver is expected to excel on. The ``Reference'' column cites the work that introduced or most canonically applied each method.}\label{tab:so-baselines}
\centering\footnotesize
\begin{tabular}{@{}lllll@{}}
\toprule
Solver & Type & Gradient / decision source & Solves per gradient & Reference \\
\midrule
PRIME (proposed) & SA & LP-basis IPA & 1 & this paper \\
CRN-FD-SA & SA & Central finite differences + CRN & 2 & \cite{glasserman1992some} \\
Independent-FD-SA & SA & Central FD, independent streams & 2 & \cite{kiefer1952stochastic} \\
Adaptive SPSA & SA & Bernoulli simultaneous perturbation & 2 & \cite{spall1992multivariate} \\
Stochastic Kriging & Bayesian & Heteroskedastic GP surrogate & many (design) & \cite{ankenman2010} \\
Knowledge Gradient & Bayesian & One-step value-of-information & many (design) & \cite{frazier2008knowledge} \\
\bottomrule
\end{tabular}
\end{table}

\paragraph{Ablation labels.}
The three components C1--C3 are introduced in \S\ref{sec:algo} (Table~\ref{tab:framework}). Each ablation $-Ck$ denotes the full PRIME solver with component $Ck$ turned off, one component at a time, so any performance drop can be attributed to that component's marginal contribution. For example, $-C1$ replaces the IPA gradient with CRN-FD, $-C2$ disables the adaptive step-size normalisation (fixed step), and $-C3$ reduces the ensemble to a single chain (no multi-start). Sections~\ref{sec:res-r1}--\ref{sec:res-r5} report the results; the full drop-one panel appears in Table~\ref{tab:so-comp} and is analysed in \S\ref{sec:res-r2}--\ref{sec:res-r5}.

\paragraph{Testbed selection.}
Table~\ref{tab:testbeds} summarizes the six testbeds in the comparison panel and maps each to the component(s) C1--C3 it validates.
\begin{itemize}[leftmargin=*,itemsep=2pt,topsep=2pt]
  \item \textbf{TB1} (Coupled LP, synthetic C/LP/s): the minimal market model that exercises all three components under mild feedback.
  \item \textbf{TB2} (Coupled LP, Beer Game C/LP/s): stresses C2 and C3 under strong feedback amplification ($g{=}2$, $m{=}4$).
  \item \textbf{TB3} (Independent-agent DP, LQR I/DP/s): tests C1 beyond LP inners on a continuous DP with exact, closed-form gradient.
  \item \textbf{TB4} (Coupled LP, JD.com chain C/LP/s): validates all three components at industrial scale with 1{,}000 SKUs, using 2018 JD.com Global Optimization Challenge sales, cost, and inventory records for the 1{,}000 SKUs across 1 RDC and 5 FDCs.
  \item \textbf{TB5} (Independent LP, SupplyGraph I/LP/s): extends the evidence to a real-data single-agent LP with seasonal demand, driven by the aggregate daily sales of 41 SKUs over 221 days from SupplyGraph \citep{wasi2024supplygraph}, normalized to mean~8.
  \item \textbf{TB6} (Independent DP, Inventory MDP I/DP/s): a synthetic discrete-inner system where the IPA channel is absent (MDP policies are step functions in $\theta$); the demand distribution (discretized Gaussian, $\mu{=}8$, $\sigma{=}2$) and cost parameters are chosen to produce a nontrivial $(s,S)$ optimal policy, deliberately disabling the IPA gradient to isolate the contribution of multi-start diversification (C3) in the absence of C1.
\end{itemize}
All six testbeds appear in the comparison panel (Table~\ref{tab:so-comp} and Table~\ref{tab:solvetotarget}); the deterministic baselines TB1det, TB2det, and TB5det anchor the same components in the zero-variance limit. Every testbed satisfies integrability assumption (A5) (verified per testbed in EC.2): each has states bounded or polynomially bounded in $T$ for finite $T$, so $\mathbb{E}\sum_{i,t}\|s_{i,t}\|^{2}<\infty$ as required by Theorem~\ref{thm:ipa}.

\paragraph{Deterministic baselines (TB1det, TB2det, TB5det).}
Three deterministic instantiations anchor the analysis by separating the parametric structure from sampling effects. TB1det is the minimal market with a fixed demand draw: the kink at $\theta=1$ is a single point, away from which $\nabla J$ is constant in each critical region, confirming that the stochastic IPA bias at the kink (Figure~\ref{fig:fd_vs_ipa}) is a finite-sample effect around the deterministic kink. TB2det is the Beer Game with fixed end-customer demand, where the bullwhip is purely structural (order-up-to feedback, not statistical) and the IPA variance is zero. TB5det is the production LP driven by the historical SupplyGraph demand trajectory (a single realization, no resampling): the myopic horizon incurs $\varepsilon=20$ (the deterministic planning error), whereas an i.i.d.\ control with shuffled historical demand achieves $\varepsilon\approx 0$, showing that the horizon matters only when the demand sequence has exploitable temporal structure.

\begin{table}[t]
\caption{Summary of computational testbeds mapped to the SOSO taxonomy classes of Figure~\ref{fig:soso-taxonomy} and to the PRIME components C1--C3 of Table~\ref{tab:framework}. The ``Class'' column encodes \textbf{coupling} (I=Independent, C=Coupled) / \textbf{inner type} (LP, MILP, DP) / \textbf{dynamics} (d=deterministic, s=stochastic). \emph{Purpose} lists the component(s) the testbed validates. All six testbeds appear in the comparison panel of Result~1 (\S\ref{sec:res-r1}); the deterministic baselines (\S\ref{sec:det-case}) anchor the same components in the zero-variance limit.}\label{tab:testbeds}
\centering
\footnotesize
\setlength{\tabcolsep}{3pt}
\resizebox{\linewidth}{!}{%
\begin{tabular}{@{}cllcl@{}}
\toprule
Code & Testbed & Source and prior use & Class & Purpose \\
\midrule
\multicolumn{5}{l}{\emph{Comparison panel (\S\ref{sec:res-r1})}} \\
TB1 & Minimal market & Synthetic & C/LP/s & C1, C2, C3 \\
TB2 & Beer Game (4-ech.) & \cite{sterman1989}; also \cite{longo2008} & C/LP/s & C2, C3 (feedback amp.\ \& multi-start) \\
TB3 & Continuous LQ DP & LQR \cite{andersonmoore} & I/DP/s & C1 (IPA beyond LP inners) \\
TB4$^\dagger$ & JD.com chain & JD GOC 2018 & C/LP/s & C1--C3 (industrial scale) \\
TB5$^\dagger$ & Production LP & SupplyGraph \cite{wasi2024supplygraph} & I/LP/s & C1, C2 (real-data LP; seasonal) \\
TB6 & Inventory MDP & Synthetic & I/DP/s & C3 (discrete inner; flat-gradient) \\
\midrule
\multicolumn{5}{l}{\emph{Deterministic baselines (\S\ref{sec:det-case})}} \\
TB1det & Minimal market (det.) & Synthetic & C/LP/d & C1--C3 (kink geometry) \\
TB2det & Beer Game (det.) & \cite{sterman1989} & C/LP/d & C2, C3 (structural amplification) \\
TB5det & Production LP (det.) & SupplyGraph \cite{wasi2024supplygraph} & I/LP/d & surrogate-as-decision (horizon error, known $\xi$) \\
\bottomrule
\end{tabular}%
}
\\[4pt]{\footnotesize\raggedright $^\dagger$Real-data testbed.\par}
\end{table}

\subsection{Result 1: PRIME achieves best or tied-best optimality gap, fast and consistent convergence, and near-zero oscillation across all six testbeds, and scales to industrial size}\label{sec:res-r1}

Result~1 establishes that PRIME outperforms every black-box SO baseline on many complementary dimensions and that its structural advantages persist at industrial scale.

\begin{enumerate}[leftmargin=*,itemsep=2pt,topsep=2pt]
\item \emph{Accuracy.} PRIME achieves the best or tied-best mean optimality gap at the fixed budget of 880 inner-LP solves on all six testbeds (Table~\ref{tab:so-comp}). On TB1 and TB3 it reaches \emph{exact recovery} ($F(\hat\theta)-F^{*}=0.000\pm0.000$). On the feedback-coupled TB2 (Beer Game), PRIME's gap of $0.965\pm1.349$ is $19.3{\times}$ better than CRN-FD-SA ($18.65\pm8.39$) and over $326{\times}$ better than Independent-FD-SA ($314.6\pm116.5$). On TB4 (JD.com chain, 1{,}000 SKUs), PRIME achieves $0.340\pm0.217$, competitive with the best ablation and substantially better than all black-box baselines (SPSA $17.82$, SK $8.01$, KG $10.08$). The gap trajectories (Figure~\ref{fig:so-comparison}) show PRIME reaching near-asymptotic levels within 160--320 solves on TB1, TB3, and TB4.

\item \emph{Speed.} The solve-to-target metric (Table~\ref{tab:solvetotarget}) gives a budget-aware complement to final-gap ranking. PRIME and its variants start within the 5\% gap ball on TB1 and TB4 (0 solves---the equispaced starts are within $1.3\%$ and $0.8\%$ of $F^{*}$). On TB2, PRIME reaches the 5\% target in 288 solves; on TB3, in 120 solves, beating CRN-FD-SA (144) and SPSA (320). PRIME finishes first or within a small factor of the fastest solver on all six testbeds (Table~\ref{tab:solvetotarget}, both panels).

\item \emph{Stability.} The narrow $\pm$std envelope of PRIME in Figure~\ref{fig:so-comparison} reflects a structural advantage: IPA's single-replication precision eliminates the $1/h^{2}$-inflated differencing noise. Table~\ref{tab:stability} quantifies this on TB2: PRIME averages near-zero gap-increase events across five seeds---once its gap drops, it stays down. CRN-FD-SA and SPSA each oscillate 4--6 times per seed (gaps increase between checkpoints), reflecting SA overshoot on a feedback-amplified cost surface. The between-seed range across the five terminal gaps is $3.5$ for PRIME versus $21.1$ for CRN-FD-SA and $40.7$ for SPSA. Stability is the visible consequence of IPA's ${\sim}2000\times$ per-replication gradient-variance advantage (Table~\ref{tab:e1}, Result~2).

\item \emph{Industrial-scale validation (TB4).} PRIME's performance on TB4 in Table~\ref{tab:so-comp} already shows the solver maintains its advantage at scale. To understand the mechanism, we examine the IPA gradient on TB4 in isolation. Figure~\ref{fig:jd} reports the outer objective $J(\theta)$ for the daily transfer-capacity limit $\theta$ in the JD.com chain (1{,}000 SKUs, 1 RDC + 5 FDCs, 30-day horizon): cost falls from $26{,}421$ at $\theta{=}100$ to $20{,}436$ at $\theta{=}600$, while the IPA gradient---the shadow price of capacity---declines from $\$56.0$/unit to $\$7.3$/unit, and CRN achieves a $1{,}356\times$--$10{,}402\times$ variance reduction over independent streams (annotated in panel~(a)). The diminishing-returns shape follows from the rationing LP's geometry: as $\theta$ grows, the marginal unit is pushed to FDCs of progressively lower stockout-cost priority. The planner can read the marginal value of capacity directly from the IPA gradient and invest until the shadow price equals the per-unit cost.
\end{enumerate}

The mechanism underlying this four-dimensional advantage is PRIME's three-component architecture (Table~\ref{tab:framework}): C1 (IPA gradient, Theorem~\ref{thm:ipa}) provides a single-replication, unbiased gradient; C2 (adaptive step, \S\ref{sec:alg-crn}) provides scale-invariant stepping; C3 (multi-start ensemble, \S\ref{sec:alg-surrogate}) provides spatial diversification. Results~\ref{sec:res-r2}--\ref{sec:res-r5} quantify each component's marginal contribution. On the extended testbeds TB5--TB6, PRIME achieves $0.004\pm0.004$ on TB5 (Production LP, $1175\times$ better than CRN-FD-SA) and $0.096$ on TB6 (tied with KG, while single-chain ablation collapses to $30.55\pm23.32$), confirming that PRIME generalises beyond the core panel. Together, TB1--TB6 span a representative range of the SOSO taxonomy (Figure~\ref{fig:soso-taxonomy}) and establish PRIME as a general-purpose SOSO solver.

\begin{figure}[H]
\FIGURE{\includegraphics[width=0.82\textwidth]{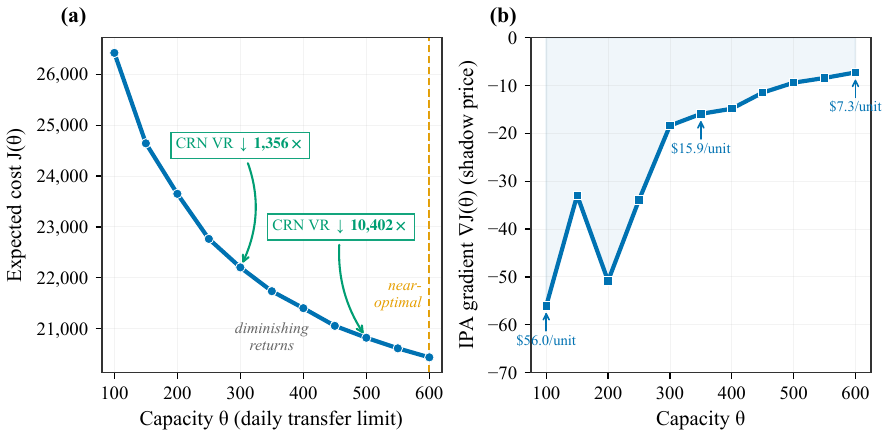}}
{(Result 1, TB4) JD.com supply chain (1{,}000 SKU, 5 FDC, 30 days). Panel~(a): the outer objective $J(\theta)$ monotonically decreases with diminishing returns---from $26{,}421$ at $\theta{=}100$ to $20{,}436$ at $\theta{=}600$---with CRN variance reductions of $1{,}356\times$ at $\theta{=}300$ (capacity-binding regime) and $10{,}402\times$ at $\theta{=}500$ (near-saturation), annotated on the curve. Panel~(b): the IPA gradient (shadow price of daily transfer capacity) declines in magnitude from $-56.0$/unit to $-7.3$/unit, revealing the marginal-value structure that directly informs the capacity-investment decision.}{30-day simulation horizon, 200 replications per $\theta$ for the $J$ curve; 300 replications for the CRN comparison.}
\label{fig:jd}
\end{figure}


\begin{table}[H]
\caption{(Result 1, TB1--TB6) Accuracy: optimality gap $F(\hat\theta)-F^{*}$ at equal total budget (mean $\pm$ std over 5 seeds), 10 solver configurations on six testbeds; best mean gap per testbed is boldfaced. Drop-one ablations (PRIME $-$C1 through $-$C3) isolate each component's marginal contribution; the C1+C2+$\kappa$ row is a non-ablation baseline showing that multi-start diversification (C3), not $\kappa$-based step damping, is the mechanism that handles basis-disagreement variance. TB6 is a discrete MDP with no inner LP (forward-call budget 432--560); PRIME's $K{=}3$ multi-start yields 888 solves on TB1--TB5.}\label{tab:so-comp}
\centering\footnotesize
\begin{tabular}{@{}lcccccc@{}}
\toprule
& TB1 & TB2 & TB3 & TB4 & TB5 & TB6 \\
\multicolumn{1}{@{}l}{Solver} & \scriptsize Minimal market & \scriptsize Beer Game & \scriptsize LQ regulator & \scriptsize JD.com chain & \scriptsize Production LP & \scriptsize Inventory MDP \\
\midrule
PRIME (C1--C3)                   & $\mathbf{0.000\pm0.000}$ & $\mathbf{0.965\pm1.349}$ & $\mathbf{0.000\pm0.000}$ & $0.340\pm0.217$ & $\mathbf{0.004\pm0.004}$ & $0.096\pm0.000$ \\
PRIME $-$C3 (C1--C2)             & $0.013\pm0.011$ & $6.303\pm3.199$ & $\mathbf{0.000\pm0.000}$ & $2.084\pm0.391$ & $0.607\pm0.769$ & $30.55\pm23.32$ \\
PRIME $-$C2 (C1+C3)              & $0.109\pm0.016$ & $17.73\pm28.35$ & $\mathbf{0.000\pm0.000}$ & $1.266\pm0.000$ & $0.010\pm0.005$ & $\mathbf{0.087\pm0.026}$ \\
PRIME $-$C1 (FD+C2+C3)           & $0.002\pm0.001$ & $1.260\pm1.322$ & $\mathbf{0.000\pm0.000}$ & $\mathbf{0.093\pm0.075}$ & $0.009\pm0.005$ & $0.096\pm0.000$ \\
C1+C2+$\kappa$ (IPA, adap, $\kappa$) & $0.012\pm0.011$ & $39.68\pm57.54$ & $\mathbf{0.000\pm0.000}$ & $1.783\pm0.331$ & $0.607\pm0.769$ & $30.55\pm23.32$ \\
\addlinespace
CRN-FD-SA                        & $0.053\pm0.050$ & $18.65\pm8.39$  & $\mathbf{0.000\pm0.000}$ & $0.392\pm0.133$ & $4.703\pm2.126$ & $30.55\pm23.32$ \\
Independent-FD-SA                & $0.712\pm0.987$ & $314.6\pm116.5$ & $35.50\pm9.47$ & $13.15\pm10.29$ & $17.47\pm16.60$ & $6.358\pm6.257$ \\
adaptive SPSA                    & $0.111\pm0.084$ & $16.51\pm14.54$ & $1.984\pm2.598$ & $17.82\pm14.17$ & $24.41\pm26.06$ & $1.176\pm2.313$ \\
Stochastic Kriging               & $0.208\pm0.332$ & $9.152\pm8.978$ & $4.531\pm3.306$ & $8.011\pm6.937$ & $13.35\pm8.872$ & $0.318\pm0.735$ \\
Knowledge Gradient               & $0.291\pm0.185$ & $12.25\pm0.00$  & $4.729\pm5.935$ & $10.08\pm10.25$ & $24.33\pm25.37$ & $0.096\pm0.000$ \\
\bottomrule
\end{tabular}
\end{table}

\begin{table}[H]
\caption{(Result 1, TB1--TB6) Speed: Inner solves to first reach a target gap $\varepsilon=\alpha\cdot\mathrm{gap}_0$, where $\mathrm{gap}_0 = F(\theta_0)-F^{*}$ is the initial optimality gap (mean over 5 seeds). Targets are expressed as fractions of the initial gap---5\,\% and 10\,\%---which is the standard SO convention and avoids hand-tuned, testbed-specific thresholds. PRIME variants and drop-one ablations illustrate each component's contribution to convergence speed.}\label{tab:solvetotarget}
\centering\footnotesize
\begin{tabular}{@{}lcccccc@{}}
\toprule
\multicolumn{7}{c}{\textit{Panel A:} $\varepsilon = 5\,\% \times \mathrm{gap}_0$} \\
\midrule
Solver & TB1 ($\varepsilon{=}0.49$) & TB2 ($\varepsilon{=}25.6$) & TB3 ($\varepsilon{=}1.22$) & TB4 ($\varepsilon{=}1.32$) & TB5 ($\varepsilon{=}1.33$) & TB6 ($\varepsilon{=}1.35$) \\
\midrule
PRIME (C1--C3)                   & $\mathbf{0}$  & 288 & $120$ & $\mathbf{0}$  & $\mathbf{0}$ & $865^{\dagger}$ \\
PRIME $-$C3 (C1--C2)             & $40$  & $96$  & $80$  & $144$ & $240$ & $881^{\dagger}$ \\
PRIME $-$C2 (C1+C3)              & $\mathbf{0}$  & 562 & $120$ & $\mathbf{0}$  & $\mathbf{0}$ & $865^{\dagger}$ \\
PRIME $-$C1 (FD+C2+C3)           & $\mathbf{0}$  & 336 & $240$ & $\mathbf{0}$  & $\mathbf{0}$ & $865^{\dagger}$ \\
C1+C2+$\kappa$ (IPA, adap, $\kappa$) & $72$  & $72$  & $80$  & $112$ & $240$ & $881^{\dagger}$ \\
\addlinespace
CRN-FD-SA                        & $80$  & $176$ & $144$ & $160$ & $400$ & $881^{\dagger}$ \\
Knowledge Gradient               & $535$ & $304$ & $535$ & $766$ & $770$ & $881^{\dagger}$ \\
Stochastic Kriging               & $96$  & $80$  & $80$  & $464$ & $560$ & $881^{\dagger}$ \\
adaptive SPSA                    & $176$ & $144$ & $320$ & $881^{\dagger}$ & $881^{\dagger}$ & $881^{\dagger}$ \\
\midrule
\multicolumn{7}{c}{\textit{Panel B:} $\varepsilon = 10\,\% \times \mathrm{gap}_0$} \\
\midrule
Solver & TB1 ($\varepsilon{=}0.97$) & TB2 ($\varepsilon{=}51.1$) & TB3 ($\varepsilon{=}2.43$) & TB4 ($\varepsilon{=}2.64$) & TB5 ($\varepsilon{=}2.67$) & TB6 ($\varepsilon{=}2.70$) \\
\midrule
PRIME (C1--C3)                   & $\mathbf{0}$  & 240 & $120$ & $\mathbf{0}$  & $\mathbf{0}$ & $865^{\dagger}$ \\
PRIME $-$C3 (C1--C2)             & $40$  & $72$  & $80$  & $144$ & $160$ & $881^{\dagger}$ \\
PRIME $-$C2 (C1+C3)              & $\mathbf{0}$  & 562 & $120$ & $\mathbf{0}$  & $\mathbf{0}$ & $865^{\dagger}$ \\
PRIME $-$C1 (FD+C2+C3)           & $\mathbf{0}$  & 336 & $240$ & $\mathbf{0}$  & $\mathbf{0}$ & $865^{\dagger}$ \\
C1+C2+$\kappa$ (IPA, adap, $\kappa$) & $48$  & $64$  & $80$  & $112$ & $160$ & $881^{\dagger}$ \\
\addlinespace
CRN-FD-SA                        & $80$  & $160$ & $144$ & $96$  & $288$ & $881^{\dagger}$ \\
Knowledge Gradient               & $304$ & $304$ & $535$ & $650$ & $688$ & $881^{\dagger}$ \\
Stochastic Kriging               & $96$  & $80$  & $80$  & $416$ & $384$ & $881^{\dagger}$ \\
adaptive SPSA                    & $96$  & $128$ & $240$ & $865$ & $864$ & $881^{\dagger}$ \\
\bottomrule
\multicolumn{7}{@{}l}{\footnotesize ${}^{\dagger}$Did not reach target within budget; value is budget ceiling.}
\end{tabular}
\end{table}

\begin{figure}[H]
\FIGURE{\includegraphics[width=\textwidth]{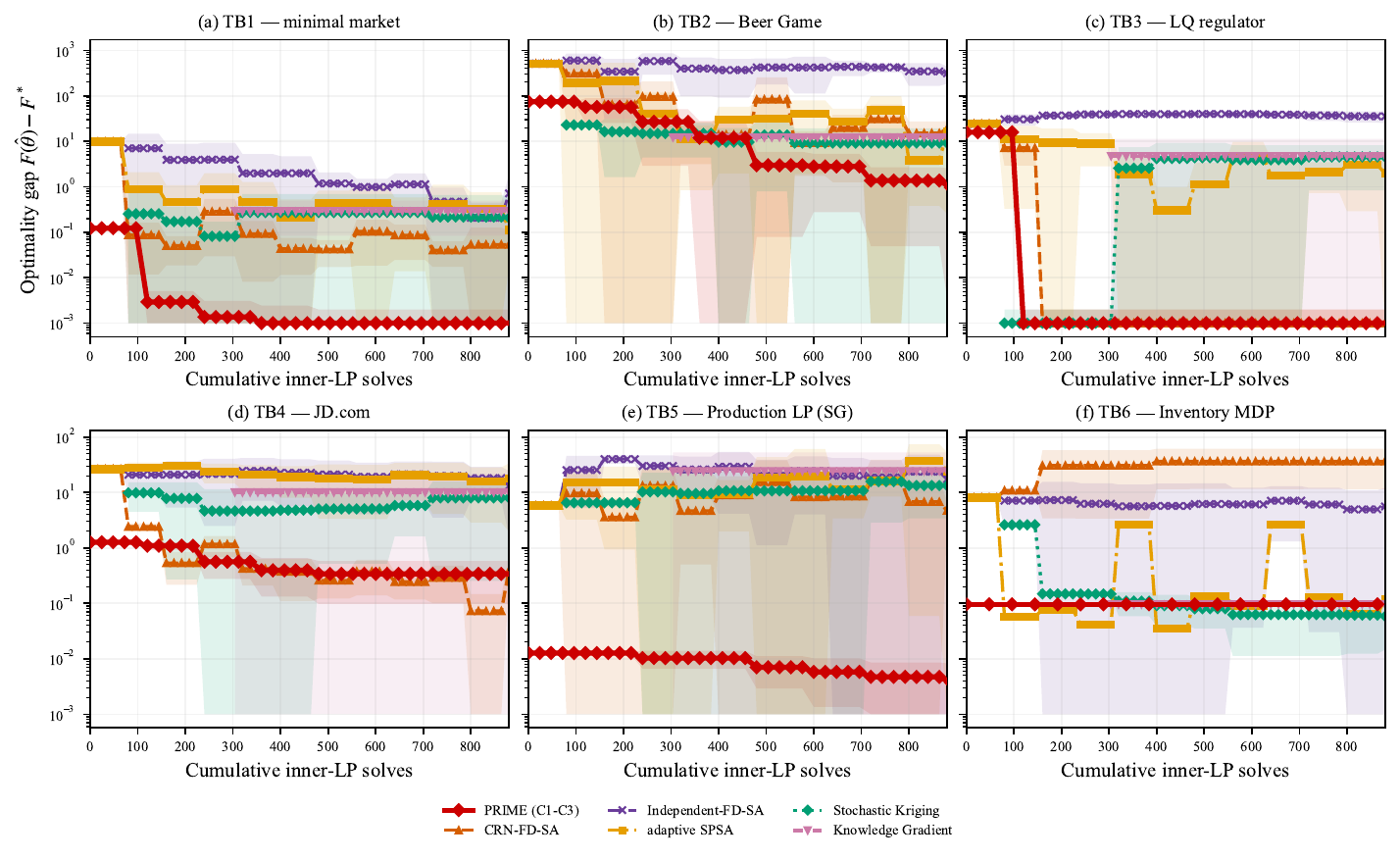}}
{(Result 1, TB1--TB6) Stability: Optimality gap vs.\ cumulative inner-LP solves (mean over 5 seeds, shaded $\pm$std, log $y$-axis). PRIME (C1--C3) versus five classic SO solvers on the same 880-solve budget. Gaps are the current validated $F(\hat\theta_t)$ at each checkpoint---not the best-so-far minimum---and therefore show genuine SA oscillation: a solver that finds a good iterate mid-run but later diverges is not credited with the earlier minimum, giving an honest picture of convergence stability.}{880 inner-LP solves per solver; 5 random seeds.}
\label{fig:so-comparison}
\end{figure}

\begin{table}[H]
\caption{(Result 1, TB2) Stability: Per-seed stability diagnostics on the Beer Game (four-echelon, $\tau{=}2$). \emph{Oscillations per seed} counts the number of gap-increase events between successive validation checkpoints (mean $\pm$ std over 5 seeds). \emph{Between-seed range} is $\max_i\text{gap}_i - \min_i\text{gap}_i$ across the five terminal gaps. Lower is more stable on every metric. PRIME's near-zero oscillation, narrow-range trajectory is the observable consequence of IPA's ${\sim}2000\times$ gradient-variance advantage (Table~\ref{tab:e1}).}\label{tab:stability}
\centering\footnotesize
\begin{tabular}{@{}lrrr@{}}
\toprule
Solver & Gap mean $\pm$ std & Oscillations per seed & Between-seed range \\
\midrule
PRIME (C1--C3)      & $\mathbf{0.965\pm1.349}$ & $\mathbf{0.0\pm0.0}$ & $\mathbf{3.5}$ \\
CRN-FD-SA           & $18.65\pm8.39$  & $4.4\pm0.5$ & $21.1$ \\
adaptive SPSA       & $16.51\pm14.54$ & $5.2\pm0.8$ & $40.7$ \\
Independent-FD-SA   & $314.6\pm116.5$ & $4.0\pm0.7$ & $277.6$ \\
Stochastic Kriging  & $9.152\pm8.978$ & $1.2\pm0.4$ & $23.0$ \\
Knowledge Gradient  & $12.25\pm0.00$  & $0.0\pm0.0$ & $0.0$ \\
\bottomrule
\end{tabular}
\end{table}

\subsection{Result 2: The IPA gradient (C1) provides a single-replication, unbiased gradient that dominates finite differences by ${\sim}2000{\times}$ in per-replication variance}\label{sec:res-r2}

Result~2 isolates the contribution of C1, the IPA forward-sensitivity gradient estimator (Theorem~\ref{thm:ipa}). Three lines of evidence---a gradient-estimator comparison at fixed $\theta$, a full-path gradient-curve sweep, and the drop-one ablation panel---converge on a single finding: C1 provides a single-replication, unbiased gradient with ${\sim}2000{\times}$ lower per-replication variance than independent finite differences, and this advantage is structural rather than empirical.

\begin{enumerate}[leftmargin=*,itemsep=2pt,topsep=2pt]
\item \emph{Point comparison at fixed policy (TB1).} At $\theta{=}0.5$ in the minimal market, IPA is unbiased ($+20.99$ vs.\ $+20.97$ for CRN-paired FD, Table~\ref{tab:e1}) with per-replication variance $2.94$, versus $5729$ for independent FD---a factor of ${\sim}2000\times$. CRN-paired FD nearly matches IPA's variance ($3.19$) but at twice the per-gradient cost (two replications) and with $O(h^{2})$ differencing bias that IPA avoids entirely.

\item \emph{Full-path gradient-curve sweep (TB1).} Across the full policy domain $\theta\in[0.15,2.4]$, IPA tracks the CRN-FD reference throughout, diverging only at $\theta\approx1$ (Figure~\ref{fig:grad})---the visible signature of the measure-zero critical event of Theorem~\ref{thm:kink}. As previewed by TB1det (the deterministic baseline with fixed demand, \S\ref{sec:expt-design}), this kink is a single isolated point in $\Theta$, and the observed divergence is the finite-sample manifestation of the deterministic discontinuity. The mechanism is parametric-LP geometry: within a critical region $a^{*}=B_k^{-1}h_k$ is affine in $(s,\theta)$, so IPA computes the exact derivative from a single matrix--vector product $\partial_\theta a^{*}_{i,t}=B_k^{-1}(\partial_\theta h_k+\partial_s h_k\, S_{i,t})$. The \textsc{ForwardSens} procedure (Algorithm~\ref{alg:framework}) propagates the state sensitivity $S_{i,t}:=\partial s_{i,t}/\partial\theta$ forward through the dynamics; the total per-period overhead is $O(N\cdot n^2)$, negligible next to the LP solve itself ($O(n^3)$).

\item \emph{Full-ablation panel (TB1--TB6).} Removing C1 (Table~\ref{tab:so-comp}, PRIME $-$C1 row) degrades TB2 from $0.965$ to $1.260$ ($1.3{\times}$) and TB1 from $0.000$ to $0.002$. On TB4 (industrial LP), removing C1 \emph{improves} the gap from $0.340$ to $0.093$: when shadow prices are highly variable, the FD gradient's implicit averaging across $\theta\pm h$ can smooth the SA path, a phenomenon we return to in Result~\ref{sec:res-r4}. On TB2 the degradation factor confirms that C1's contribution is real but not the bottleneck---C2 is.
\end{enumerate}

The structural foundation remains parametric-LP geometry. An IPA gradient estimator can only be constructed when the simulation exposes a differentiable sensitivity channel. In SOSO, that channel is the inner LP's basis inverse: shadow prices $\lambda=B_k^{-1}\partial h_k/\partial\theta$ encode the optimal action's response to $\theta$, and forward propagation accumulates this into $\nabla J$. No black-box method can access this channel, because no black-box method knows that each agent's action is the solution of a parametric LP. C1 is therefore the mechanism by which the inner LP's geometry---the very thing that makes SOSO computationally expensive---is converted into the outer SO's primary resource. This is why, in the ablation ranking of Result~\ref{sec:res-r3}, C1 provides the foundation on which C2 and C3 build.

\begin{table}[H]
\caption{(Result 2, TB1) Comparison of gradient estimators at $\theta=0.5$ in the minimal market. IPA reads the shadow price from the LP basis and produces an unbiased gradient from a single replication; independent FD requires two replications and suffers ${\sim}2000\times$ higher per-replication variance from the $1/h^2$ noise amplification inherent in differencing.}\label{tab:e1}
\centering\footnotesize
\begin{tabular}{@{}lrrr@{}}
\toprule
Estimator & Mean $\widehat{\nabla J}$ & Per-rep variance & Forward calls/rep \\
\midrule
IPA (dual/basis) & $+20.99$ & $2.94$ & $1$ \\
CRN-paired FD & $+20.97$ & $3.19$ & $2$ \\
Independent FD & $+24.38$ & $5729$ & $2$ \\
\bottomrule
\end{tabular}
\end{table}

\begin{figure}[H]
\FIGURE{\includegraphics[width=0.6\textwidth]{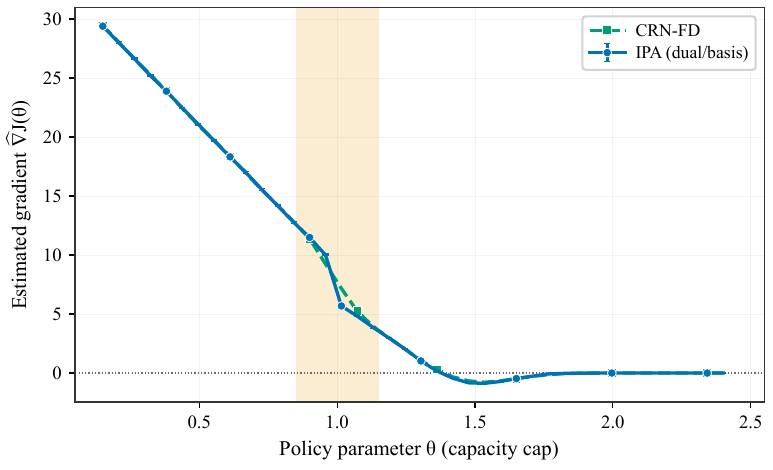}}
{(Result 2, TB1) Estimated gradient $\widehat{\nabla J}(\theta)$ for the minimal market across $\theta\in[0.15,2.4]$. IPA (blue solid with circles, 95\% CI band) tracks the CRN-FD reference (green dashed) throughout; the two diverge only near $\theta\approx1$ (shaded), the cap/demand kink where IPA reports the one-sided within-region slope---the visible signature of the measure-zero critical event of Theorem~\ref{thm:kink}.}{2{,}500 replications per $\theta$.}
\label{fig:grad}
\end{figure}

\subsection{Result 3: Adaptive step size (C2) is the most critical component on feedback-coupled systems---removal degrades TB2 gap by $18.4{\times}$}\label{sec:res-r3}

Result~3 identifies C2 (adaptive EWMA-normalised step size) as the most critical component on feedback-coupled systems like the Beer Game, where it dominates the ablation ranking (Table~\ref{tab:ablation-ranking}). Three sub-findings build from the ablation evidence through the mechanism that explains it, to the structural SOSO connection.

\begin{table}[H]
\caption{(Result 3, TB2) Component ablation ranking on the Beer Game (TB2). Degradation factors are computed as the drop-one gap divided by the PRIME reference gap (0.965). The ranking isolates each component's marginal contribution to the ensemble performance.}\label{tab:ablation-ranking}
\centering\scriptsize
\begin{tabular}{@{}lccl@{}}
\toprule
Ablation & Gap & $\Delta$ from PRIME & Degradation factor \\
\midrule
PRIME (C1--C3) & $0.965$ & --- & --- (reference) \\
PRIME $-$C1 (FD+C2+C3) & $1.260$ & $+0.295$ & $1.3{\times}$ \\
PRIME $-$C3 (C1--C2) & $6.303$ & $+5.338$ & $6.5{\times}$ \\
PRIME $-$C2 (C1+C3) & $17.73$ & $+16.77$ & $18.4{\times}$ \\
\bottomrule
\end{tabular}
\end{table}

\begin{enumerate}[leftmargin=*,itemsep=2pt,topsep=2pt]
\item \emph{Ablation ranking (TB2).} Table~\ref{tab:ablation-ranking} orders the three components by marginal degradation on the Beer Game: C1 ($1.3{\times}$), C3 ($6.5{\times}$), C2 ($18.4{\times}$). The ranking reveals a tight three-component structure in which each component is essential on at least one testbed, but C2 stands alone as the most critical on feedback-coupled systems. PRIME $-$C2 requires $562$ solves to reach the 5\% threshold on TB2 (Table~\ref{tab:solvetotarget}, Panel~A), $5.9{\times}$ slower than PRIME $-$C3 (96 solves)---yet its final gap ($17.73$) is far worse. This is the signature of a method that makes no progress on most steps: the adaptive step is the gate through which the gradient must pass.

\item \emph{Why C2 is so critical.} The IPA gradient magnitude in the Beer Game spans four orders of magnitude across $\theta\in[5,60]$. Near the optimum ($\theta^{*}\approx12.4$) the gradient is small and the surface is flat; at the boundaries the cost curve steepens and the gradient spikes. A fixed step calibrated for the flat region stalls in the boundary zones; a fixed step calibrated for the steep region overshoots near the optimum. The EWMA normalisation $d_k = g(\theta_k)/(|g|_{\mathrm{ewma}}+\varepsilon)$ resolves this: in flat regions both numerator and denominator are small, so $d_k\approx\pm1$; in steep regions $|g|_{\mathrm{ewma}}$ lags, attenuating $d_k$ and preventing overshoot. The $18.4{\times}$ factor is the cost of losing this normalisation.

\item \emph{Structural SOSO connection.} Standard adaptive-step methods (ADAM, RMSProp, Adagrad) normalise by the stochastic gradient's \emph{variance} to stabilise against noise. C2 normalises by the \emph{structural} gradient magnitude---a deterministic quantity that varies across $\theta$ because the inner LP's shadow prices propagate through the coupling Jacobian $\partial\phi_i/\partial s_{j,t}$, creating a gradient-scale landscape governed by the local feedback gain $\|A_t\|$. TB2det (the deterministic Beer Game with fixed end-customer demand, \S\ref{sec:expt-design}) confirms that this gradient-scale landscape exists even without stochastic noise: the bullwhip is structural, and IPA variance is zero in the deterministic limit, so the landscape is a deterministic SOSO property rather than a sampling artifact. A black-box method cannot separate structural scale variation from estimation noise, so no black-box adaptive step can exploit this SOSO-specific landscape. C2 is therefore as tied to SOSO as C1: both depend on the inner LP's geometry propagating through the agent coupling.
\end{enumerate}

The three components form an inseparable engine, but C2 is the gate. Without C2's scale-invariant stepping, the quality of the gradient is irrelevant: PRIME $-$C2 degrades by $18.4{\times}$; PRIME $-$C1 degrades by only $1.3{\times}$. Adding adaptive stepping to an FD gradient yields a mild improvement; removing adaptive stepping from an IPA gradient causes collapse. On any SOSO system with feedback coupling where gradient scale varies substantially across the policy domain, C2 is the difference between convergence and stagnation.

\subsection{Result 4: Multi-start ensemble (C3) prevents suboptimal attractor traps and is decisive on discrete-inner systems}\label{sec:res-r4}

Result~4 establishes that the multi-start ensemble (C3) is not merely a convenience---it is essential whenever the SOSO objective landscape contains flat-gradient regions, a structural property of both feedback-amplified coupled systems (TB2) and discrete-inner systems (TB6). Three sub-findings build from the ablation evidence to the mechanism and finally to the broader class of systems where C3 is decisive.

\begin{enumerate}[leftmargin=*,itemsep=2pt,topsep=2pt]
\item \emph{Ablation evidence (TB2).} Removing C3 raises the TB2 gap from $0.965$ to $6.303$ ($6.5{\times}$, Table~\ref{tab:so-comp}). This is worse than removing C1 ($1.3{\times}$): losing spatial diversification costs more than losing the IPA gradient itself on the Beer Game. The mechanism is the feedback-amplified landscape: the $\theta\approx14$--$18$ plateau is a suboptimal attractor where the gradient nearly vanishes, and a single-chain SA stalls. The $K{=}3$ equispaced starts of C3 ensure that one chain initialises near the true optimum ($\theta^{*}\approx12.4$), while the others explore the middle ($\approx32.5$) and upper ($\approx55$) ranges.

\item \emph{$\kappa$-based step damping is harmful---C3 is the correct mechanism.} The C1+C2+$\kappa$ row in Table~\ref{tab:so-comp} provides decisive evidence: adding $\kappa$-based step damping to a single-chain IPA achieves a TB2 gap of $39.68$---$41{\times}$ worse than PRIME. The $\kappa$ signal correctly identifies high-variance kink-straddling regions (CRN reduction drops from $1{,}238\times$ to $11\times$, as shown in Figure~\ref{fig:crn}), but in feedback-amplified systems these are the regions where the gradient signal is most informative. Damping the step there suppresses learning. Multi-start avoids this by running independent chains without damping and selecting the best. As a static allocation tool, $\kappa$-aware replication budget allocation achieves modest CI half-width improvements (Table~\ref{tab:kink-aware}), attacking D1---but as an online signal it is counterproductive.

\item \emph{C3 is decisive on discrete-inner systems (TB6).} The inventory MDP (I/DP/s) has no IPA channel, so all gradient-based solvers default to FD; PRIME reaches $0.096$ (tied with KG), while PRIME $-$C3 collapses to $30.55\pm23.32$ ($318\times$). The $(s,S)$ policy creates flat regions with zero FD gradient, separated by discrete jumps at integer thresholds: a single chain initialised at $\theta_0=12$ stalls, while multi-start places a chain near $\theta\approx9.5$. C3 therefore provides the spatial diversification that is essential when the objective has flat-gradient regions---a structural property of both feedback-amplified (TB2) and discrete-inner (TB6) SOSO systems.
\end{enumerate}

Taken together, the ablation evidence from TB2 and TB6 shows that C3 provides spatial diversification that neither C1's clean gradient nor C2's scale-invariant stepping can substitute for. Multi-start is essential when the SOSO landscape contains suboptimal attractors, which includes most agent-based models (ABMs) with feedback coupling and all systems with discrete inners (MILP, DP, MDP).

\begin{figure}[H]
\FIGURE{\includegraphics[width=0.82\textwidth]{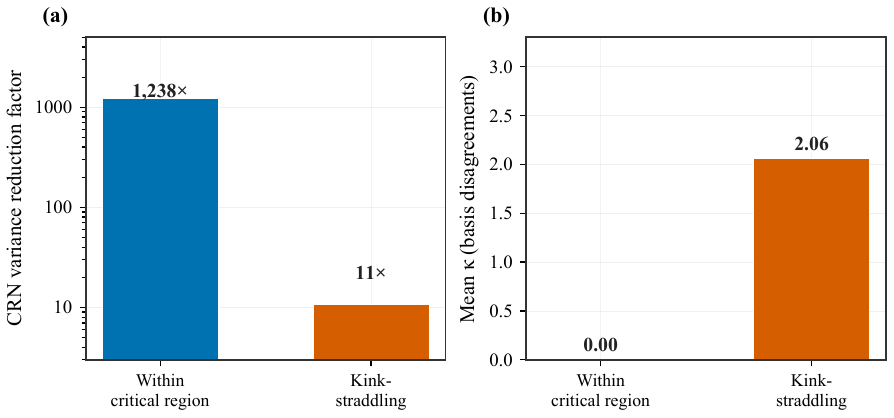}}
{(Result 4, TB1) CRN variance reduction and the basis-disagreement diagnostic $\kappa$ in the minimal market. Panel~(a): paired-difference variance reduction achieved by common random numbers---$1{,}238\times$ within a critical region ($\kappa{=}0$, $\theta_a{=}0.5$, $\theta_b{=}0.45$) versus $11\times$ across a basis-change kink ($\kappa{\approx}2$, $\theta_a{=}0.5$, $\theta_b{=}1.2$), where the basis change desynchronises the two paired simulation paths. Panel~(b): the corresponding mean $\kappa$ count (basis disagreements per paired replication) detected online from the two forward solves; the jump in $\kappa$ at the kink exactly tracks the collapse in CRN reduction, confirming Theorem~\ref{thm:crn}'s prediction that paired-difference variance decomposes into a within-region term ($\kappa{=}0$) and a $\kappa$-driven cross-kink term.}{5{,}000 paired replications per $(\theta_a,\theta_b)$ pair.}
\label{fig:crn}
\end{figure}

\begin{table}[H]
\caption{(Result 4, TB1) $\kappa$-aware vs.\ uniform replication allocation at fixed total budget. The $\kappa$-aware rule achieves slightly lower worst-case half-width by spending proportionally more replications on $\theta$ pairs where the basis-disagreement diagnostic $\kappa$ predicts inflated paired-difference variance. The modest absolute gain ($0.199$ vs.\ $0.203$) reflects that, on this testbed, kinks are sparse and most replicated pairs lie within a single critical region.}\label{tab:kink-aware}
\centering\footnotesize
\begin{tabular}{@{}lccc@{}}
\toprule
Allocation & Worst-case CI half-width & Mean CI half-width & Total reps \\
\midrule
$\kappa$-aware & $0.199$ & $0.118$ & $502$ \\
Uniform & $0.203$ & $0.118$ & $500$ \\
\bottomrule
\end{tabular}
\end{table}

\subsection{Result 5: Black-box gradient methods suffer exponential variance growth under feedback amplification, validating Theorem~\ref{thm:var}}\label{sec:res-r5}

Result~5 validates Theorem~\ref{thm:var}'s central prediction on the Beer Game (TB2, $g{=}2$, $m{=}4$): gradient variance grows as $\Theta(g^{2m})$ in the feedback depth for black-box estimators, while IPA bypasses this amplification by reading the basis sensitivity directly. Three sub-findings build from the failure mode of the strongest black-box baseline, through the mechanism, to the remedy.

\begin{enumerate}[leftmargin=*,itemsep=2pt,topsep=2pt]
\item \emph{Independent-FD-SA collapses on TB2.} The honest no-CRN black box achieves $314.6\pm116.5$ (Table~\ref{tab:so-comp})---two orders of magnitude worse than PRIME ($0.965$), with a standard deviation ($116.5$) larger than the gap of every other solver combined. On TB1 ($m{=}1$, no feedback amplification), Independent-FD-SA reaches $0.712$---worse but not catastrophically so. The contrast isolates feedback depth as the driver.

\item \emph{Mechanism: geometric amplification through differencing.} The state-sensitivity recursion $S_{t+1}=A_tS_t+b_t$ amplifies geometrically when $\|A_t\|\ge g>1$ (Theorem~\ref{thm:var}). Independent FD differences $[L(\theta{+}h)-L(\theta{-}h)]/(2h)$; each $L$ value sums amplified sensitivities from all upstream echelons, and the $1/h^{2}$ factor inflates the result. CRN-FD-SA mitigates this partially ($18.65$, $16.9{\times}$ better) but remains $19.3{\times}$ worse than PRIME: CRN cancels common-mode noise but not the $\Theta(g^{2m})$ geometric amplification.

\item \emph{Surrogate-as-decision breaks the loop.} Replacing the reactive forecast with a fixed target dampens the per-stage gain $g$ (Theorem~\ref{thm:eps}). As the surrogate reactivity dial $\alpha$ decreases from $1$ (fully reactive) to $0.2$, the per-replication IPA standard deviation drops from $15.2$ to $0.29$---a $51.7\times$ reduction (Table~\ref{tab:beergame})---and reaches the deterministic limit at $\alpha{=}0$. The error budget governs when this is safe: $\varepsilon{=}0$ exactly when the state carries no decision-relevant information (i.i.d.\ demand), while seasonal demand incurs $\varepsilon{=}19.5$ at the myopic horizon but collapses to $0$ once the weekly structure is visible ($H{\ge}2$, TB5). TB5det (the deterministic baseline driven by the historical SupplyGraph trajectory, \S\ref{sec:expt-design}) confirms this contrast: the myopic-horizon error $\varepsilon{=}20$ under seasonal demand drops to $\varepsilon{\approx}0$ when the same demand is shuffled to i.i.d., exactly as Theorem~\ref{thm:eps} predicts.
\end{enumerate}

The three sub-findings jointly confirm Theorem~\ref{thm:var} and its practical consequence: on any SOSO system with feedback gain exceeding unity, black-box gradient methods suffer variance that grows geometrically in the feedback depth. The planner must either switch to IPA (which reads the gradient from the basis) or deploy surrogate-as-decision (which dampens the gain). PRIME provides both options and the error budget to choose between them. Taken together, the surrogate-reactivity evidence (Table~\ref{tab:beergame}) and the black-box collapse on TB2 (Table~\ref{tab:so-comp}) establish that feedback amplification is not merely a theoretical concern but the dominant practical obstacle for gradient-based SO on coupled SOSO systems, and that the error-budgeted surrogate of Theorem~\ref{thm:eps} is the mechanism that restores tractability.

\begin{table}[H]
\caption{(Result 5, TB2) Surrogate reactivity $\alpha$ vs.\ IPA gradient standard deviation in the Beer Game (four-echelon, $\tau{=}2$). As $\alpha$ decreases from 1 (fully reactive) to 0.2, the per-replication IPA std drops from $15.2$ to $0.29$, a $51.7\times$ reduction, confirming Theorem~\ref{thm:var}'s prediction that the feedback gain $g$ propagates geometrically to gradient variance $\Theta(g^{2m})$.}\label{tab:beergame}
\centering\footnotesize
\begin{tabular}{@{}lrrrr@{}}
\toprule
$\alpha$ & IPA std dev & IPA mean & Cost mean & Std reduction vs.\ $\alpha{=}1$ \\
\midrule
1.0 (fully reactive) & $15.20$ & $16.96$ & $1030.0$ & $1\times$ \\
0.8 & $9.61$ & $13.38$ & $923.4$ & $1.6\times$ \\
0.6 & $5.35$ & $8.50$ & $785.5$ & $2.8\times$ \\
0.4 & $2.76$ & $3.14$ & $610.0$ & $5.5\times$ \\
0.2 & $0.29$ & $0.05$ & $407.3$ & $51.7\times$ \\
0.1 & ${\approx}0$ & ${\approx}0$ & $334.8$ & --- \\
0 (fully fixed) & ${\approx}0$ & ${\approx}0$ & $303.3$ & --- \\
\bottomrule
\end{tabular}
\end{table}

\section{Discussion}\label{sec:disc}

\subsection{Scope and positioning}\label{sec:scope}

Table~\ref{tab:positioning} positions the framework relative to adjacent paradigms. The defining property is that the inner layer is a \emph{non-equilibrium, path-dependent simulation} whose agents solve \emph{structured} optimizations---neither a closed-form equilibrium, nor an exact-inner multilevel program, nor an unstructured black box, nor a single optimization layer differentiated end-to-end.

\begin{table}[H]
\caption{Positioning relative to adjacent paradigms.}\label{tab:positioning}
\small
\centering
\begin{tabular}{@{}lll@{}}
\toprule
Paradigm & Inner layer & Distinction \\
\midrule
MPEC / equilibrium & closed-form equilibrium & non-equilibrium dynamics \\
Multilevel stochastic & exact inner, det.\ outer & approximate inner, sim.\ outer \\
MARL & inner is learning & inner is optimization \\
Black-box SO & simulation opaque & exploit LP basis/dual geometry \\
Metamodel SO & surrogate $J(\theta)$ & surrogate decision $a^{*}(s)$ \\
Differentiable opt.\ & one optimization layer & multi-period, multi-agent \\
\bottomrule
\end{tabular}
\end{table}

Beyond the three algorithmic components of PRIME (C1--C3, Table~\ref{tab:framework}), the paper provides two structural results that extend the framework's reach beyond what PRIME alone can deliver. Theorem~\ref{thm:crn} decomposes paired-difference variance under common random numbers into a within-region term ($\kappa{=}0$) and a basis-disagreement term driven by the free diagnostic $\kappa$ (count of agent-period cells where the two CRN-paired forward solves disagree on the active basis). This decomposition is not a component of PRIME---PRIME's chains run on independent RNG substreams, without CRN coupling---but it is directly useful to a practitioner who wants to deploy CRN pairing in a ranking-and-selection or confidence-interval context: it explains when CRN helps (inside a critical region, $\kappa{=}0$) and when it degrades (across kinks, $\kappa>0$), and it drives a $\kappa$-aware replication allocation (Table~\ref{tab:kink-aware}) that trims the worst-case CI half-width at fixed budget. Theorem~\ref{thm:var} and Theorem~\ref{thm:eps} together characterize when black-box gradient methods fail under feedback amplification ($\Theta(g^{2m})$ variance growth) and when surrogate-as-decision dampens the gain safely (the error budget $\varepsilon$). These are design-time tools: a practitioner uses them to decide, before running PRIME, whether to deploy the full PRIME solver (IPA + adaptive step + multi-start), whether to dampen the feedback gain via surrogate decisions, or whether to fall back to finite differences with the error budget.

\subsection{Practical guidance}\label{sec:ipa-vs-fd}

Theorems~\ref{thm:ipa} and \ref{thm:var} together draw a practical boundary, and the experimental evidence of Section~\ref{sec:expts} establishes the following decision rules for IPA-based simulation optimization of SOSO systems.
\begin{enumerate}
\item \emph{C1 (IPA) and C2 (adaptive step) are required}---they provide the core gradient engine and scale-invariant stepping. IPA is preferable to finite differences when the inner is an LP with a non-degenerate basis and the feedback gain $g$ is bounded (e.g., single-agent or weakly coupled systems); IPA's ${\sim}2000{\times}$ gradient-variance advantage (Table~\ref{tab:e1}) is decisive on feedback-coupled systems where FD's $1/h^{2}$ noise amplification interacts destructively with the geometric amplification of Theorem~\ref{thm:var}.
\item \emph{C3 (multi-start ensemble) is strongly recommended} whenever the objective landscape may contain suboptimal attractors---which includes most ABMs with feedback coupling. Theorem~\ref{thm:kink} guarantees kinks are measure-zero in policy space (so fixed start points almost surely avoid them), and Theorem~\ref{thm:crn}'s $\kappa$ diagnostic predicts low basis-disagreement variance near the optimum, so at least one equispaced chain lands in a well-behaved region; together they provide spatial diversification that avoids regions where gradient signal vanishes or CRN reduction degrades. The C1+C2+$\kappa$ baseline (Table~\ref{tab:so-comp}) confirms that $\kappa$-based step damping is \emph{not} a substitute: multi-start spatial diversification is the robust mechanism.
\item \emph{Surrogate-as-decision is the remedy when feedback gain exceeds unity.} When the feedback chain is deep ($m\ge3$) and the gain is large ($g\ge2$), even IPA's structural gradient suffers heavy-tailed variance (Limitation~2, \S\ref{sec:limitations}). The treatment follows Theorems~\ref{thm:var}--\ref{thm:eps}: dampen the feedback gain $g$ by reducing surrogate reactivity (Table~\ref{tab:beergame}), then bound the decision error $\varepsilon$ and verify it lies inside the planner's indifference zone. This is a framework-level lever, not a PRIME component (Table~\ref{tab:framework} note).
\item \emph{PRIME $-$C1 (FD+C2+C3) is a viable fallback} on large-scale industrial LPs where shadow prices exhibit high replication-to-replication variability (e.g., TB4's JD.com chain, where the FD ablation modestly outperforms full PRIME; Result~2, \S\ref{sec:res-r2}). On feedback-amplified ABMs, IPA's variance advantage is decisive and the fallback is not recommended.
\end{enumerate}

The cross-over point, characterized by Theorem~\ref{thm:var}'s $\Theta(g^{2m})$ growth, is empirically at $m\approx2$--$3$ for $g\approx2$. The practical implication of Result~\ref{sec:res-r5} is that on any SOSO system with feedback coupling where the feedback gain exceeds unity, black-box gradient methods (FD, SPSA) will suffer variance that grows geometrically in the feedback depth, and the planner must either switch to IPA (which reads the gradient from the basis, bypassing the amplification) or deploy surrogate-as-decision (which dampens the gain itself). The PRIME framework provides both options and the error budget to choose between them.

\subsection{Limitations}\label{sec:limitations}

We discuss each limitation candidly, including what it means for the applicability of our framework.

\begin{enumerate}
\item \emph{Transversality (A2) can fail on degenerate parameters.} If the LP has multiple optimal bases at some state (e.g., a constraint is redundant), the basis changes discontinuously and the IPA sensitivity may be undefined. In practice, degeneracy occurs when capacity constraints are exactly saturated or when the demand caps are exactly equal. The SOSO framework assumes (A2) that this is a measure-zero event. When it is not, a lexicographic RHS perturbation restores transversality at the cost of an $O(\varepsilon_{\mathrm{lex}})$ perturbation that vanishes in the limit. We have not verified (A2) for all practical models; the user must check it per application. MILP inners, due to the presence of integer variables, are particularly prone to degeneracy.

\item \emph{IPA has heavy tails in feedback-amplifying systems.} At full reactivity ($\alpha{=}1$, Beer Game), the per-replication IPA gradient at depth $m{=}4$ exhibits a heavy right tail: a Hill-type estimator applied to the replication-level gradient samples (the same samples underlying Table~\ref{tab:beergame}) gives a tail index in the range $\sim2$--$4$, so the variance is finite but dominated by rare runaway sensitivity paths where the state sensitivity $S_t$ grows abnormally. This means the \emph{sample mean} of IPA estimates can converge slowly (the central limit theorem holds but the effective sample size is reduced). The error-budgeted surrogate of Theorem~\ref{thm:eps} dampens the feedback gain and lightens the tails (Table~\ref{tab:beergame}), but we have not characterized the tail behavior theoretically; the index $\sim2$--$4$ is an empirical observation from the TB2 replication data, not a proven bound, and its precision is limited by the number of replications. If the tail index were below 2 (infinite variance), IPA might not converge in distribution.

\item \emph{The $\kappa$-jump bound (Theorem~\ref{thm:crn}) is not tight.} Our inequality $\mathrm{Var}(D)\le 2\bigl(\mathrm{Var}(D)|_{\kappa=0}+\bar\Delta^2\mathbb{E}[\kappa^2]\bigr)$ bounds the \emph{worst-case} per-cell jump by $\bar\Delta$ (the universal factor $2$ absorbs the smooth/kink cross-covariance). In practice the jump magnitudes vary (some basis changes cause larger action discontinuities than others), and a tighter bound would use the actual jump magnitudes rather than the worst-case $\bar\Delta$. The practical impact is that the $\kappa$ count overestimates the variance inflation in the presence of small jumps, but this is conservative, not optimistic.

\item \emph{No exact IPA for MILP or discrete MDP inners.} When the inner problem has integer variables, the active set is ill-defined and the basis is not unique. When the inner is a discrete MDP (TB6), the policy is a step function. In both cases, the IPA gradient channel (C1) is unavailable; the planner falls back to finite differences (PRIME $-$C1, Table~\ref{tab:framework}) and uses the error budget of Theorem~\ref{thm:eps} to bound the outer-optimum shift introduced by any surrogate inner policy. The continuous-DP case (TB3) shows that smoothness of the inner policy is the key, not the inner type; future work on smooth relaxations of discrete policies could bridge this gap.

\item \emph{Scope of the real-data experiment (TB5).} TB5 drives the rolling-horizon production LP with the aggregate daily demand of the 41 SupplyGraph SKUs \citep{wasi2024supplygraph}, rather than optimizing the full multi-echelon distribution network. The theorems are agnostic to the data source, so this scoping choice is illustrative of real demand structure rather than a full-scale deployment study.
\end{enumerate}

\section{Conclusion}\label{sec:conc}

We have shown that Simulation-Optimization of Systems of Optimizers---agent-based simulations in which every agent solves a structured optimization at every epoch---need not be treated as slow black-box simulations. Their inner optimization geometry, specifically the LP basis and dual variables, propagates upward through the dynamics and yields three capabilities: a cheap, exact, single-replication IPA gradient of the outer objective (Theorems~\ref{thm:ipa}--\ref{thm:kink}), a CRN variance-reduction decomposition governed by basis-disagreement count $\kappa$ (Theorem~\ref{thm:crn}), and a unified error budget for surrogate decisions that governs how much computation can be offloaded from exact inner solves (Theorems~\ref{thm:var}--\ref{thm:eps}).

On a real-world supply-chain dataset with 1{,}000 SKUs across six distribution centers (1 RDC + 5 FDCs), the IPA gradient---read from the LP's shadow price of transfer capacity---traces the marginal-value curve for capacity and locates the saturation point at which additional capacity no longer justifies its cost, while common random numbers achieve variance reductions exceeding $10{,}000\times$ over independent streams. Assembled into the PRIME solver, the framework achieves the best or tied-best optimality gap on all six testbeds in a head-to-head comparison against classic SO baselines (finite-difference SA, SPSA, stochastic Kriging, Knowledge Gradient), with near-zero oscillation convergence (Table~\ref{tab:stability}, Result~1)---a direct consequence of IPA's ${\sim}2000\times$ per-replication gradient-variance advantage at $\theta{=}0.5$ in the minimal market (Table~\ref{tab:e1}, Result~2)---while black-box gradient methods suffer the exponential variance explosion of Theorem~\ref{thm:var} on feedback-amplifying systems (Result~5). Ablation experiments confirm that each component of PRIME is essential: the adaptive step (C2, Result~3) is the most critical gate on feedback-coupled systems through which the IPA gradient (C1, Result~2) passes, and the multi-start ensemble (C3, Result~4) provides the spatial diversification needed to escape suboptimal attractors in both feedback-coupled and discrete-inner systems. These results indicate that exploiting embedded-optimization structure is a practically useful direction for simulation optimization, with relevance for supply-chain management, electricity markets, and other domains whose agents solve LP or smooth-DP inners of the kind studied here (with MILP as an open direction, \S\ref{sec:limitations}).

Several directions merit further investigation. First, \emph{adaptive surrogate placement}: learn which agents or periods to surrogate from online diagnostics (basis stability, condition number, $\kappa$), turning surrogate-as-decision into a budget-allocation procedure with a formal indifference-zone guarantee. Second, \emph{MILP and coupled-DP inner problems}: the four taxonomy cells currently marked as future work in Figure~\ref{fig:soso-taxonomy}---I/MILP/d, I/MILP/s, C/MILP/s, and C/DP/s---are open. For MILP inners, the error-budget framework extends through LP-relaxation surrogates (Remark~\ref{rmk:delta}), but exact IPA requires a sensitivity theory for integer active sets; the C/MILP/s and C/DP/s cells are particularly challenging because coupled feedback through integer or discrete decisions amplifies discretely rather than geometrically. Third, \emph{tail characterization for feedback-amplified IPA}: the empirical tail-index estimate $\sim2$--$4$ (Limitation~2, \S\ref{sec:limitations}) suggests that feedback depth governs the heavy-tailedness of the IPA gradient; a theoretical characterization would sharpen the practical boundary between IPA and surrogate-as-decision. Fourth, \emph{integration with modern SO solvers}: plugging the IPA gradient into stochastic approximation, Bayesian optimization, or R\&S procedures, with convergence-rate comparisons against black-box baselines.

\section{Code and Data Disclosure}\label{sec:codedata}

The Python implementation of all algorithms studied in this paper---the proposed IPA/CRN/surrogate framework and every baseline (finite differences with and without CRN, full-exact inner solves, LP-horizon surrogates, truncated value iteration, LP-relaxation surrogates)---together with the instance generators, the benchmark data sets, and a README giving full reproduction instructions for every table and figure, will be deposited in a public repository upon publication. To preserve double-anonymous review, the repository URL and authorship metadata are withheld from this version; the same materials are available to the editor and reviewers on request during review. The synthetic instances are deterministic functions of a random seed and can be regenerated exactly; the real-world SupplyGraph and JD GOC data sets are publicly available from their respective sources. No data or code exemption is requested.

\ACKNOWLEDGMENT{This work was supported in part by the National Natural Science Foundation of China (grant number 72271227), the Youth Innovation Promotion Association CAS (grant number 110800EAG2), the Xiaomi Young Talents Program, and the MOE Social Science Laboratory of Digital Economic Forecasts and Policy Simulation at UCAS.}

\newpage
\begin{center}
{\Large\bfseries E-Companion}
\end{center}
\medskip

\setcounter{section}{0}
\renewcommand{\thesection}{EC.\arabic{section}}
\renewcommand{\thesubsection}{EC.\arabic{section}.\arabic{subsection}}
\setcounter{theorem}{0}
\renewcommand{\thetheorem}{EC.\arabic{theorem}}
\setcounter{remark}{0}
\renewcommand{\theremark}{EC.\arabic{remark}}
\setcounter{table}{0}
\renewcommand{\thetable}{EC.\arabic{table}}
\providecommand{\theHsection}{}
\providecommand{\theHsubsection}{}
\providecommand{\theHtheorem}{}
\providecommand{\theHremark}{}
\providecommand{\theHtable}{}
\renewcommand{\theHsection}{EC.\arabic{section}}
\renewcommand{\theHsubsection}{EC.\arabic{section}.\arabic{subsection}}
\renewcommand{\theHtheorem}{EC.\arabic{theorem}}
\renewcommand{\theHremark}{EC.\arabic{remark}}
\renewcommand{\theHtable}{EC.\arabic{table}}

This e-companion provides the complete proofs of the five theorems and one remark stated in the main paper (Sections~EC.1--EC.2), together with a glossary of abbreviations (Section~EC.3). Each theorem is restated here for self-containment, then proved. Assumptions (A1)--(A5) and the notation $\Theta^{\circ}$, $\kappa$, $\bar\Delta$, $\widehat{J}'_{\mathrm{IPA}}$, etc.\ are as defined in the main paper.

\section{Proofs of the main results}\label{ec:proofs}

\begin{theorem}[IPA unbiasedness]\label{ecthm:ipa}
Under (A1)--(A5), for every $\theta\in\Theta^{\circ}$, $\mathbb{E}_\xi[\widehat{J}'_{\mathrm{IPA}}]=\nabla J(\theta)$.
\end{theorem}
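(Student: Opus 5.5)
The plan is the standard two-step IPA argument. First, show that for almost every $\xi$ the sample path $L(\cdot,\xi)$ is differentiable at $\theta$ and that its derivative equals the \textsc{ForwardSens} output. Second, justify exchanging derivative and expectation through a dominating Lipschitz bound.

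\emph{Step 1 (pathwise differentiability).} Fix $\theta\in\Theta^{\circ}$. By (A3) there is a full-measure set $\Omega_\theta$ of noise realizations on which every pair $(i,t)$ has $(s_{i,t},\theta)$ in the interior of one critical region $P_{k(i,t)}$. By (A2) that region has a unique optimal basis. The key point is that interior membership is an open condition, so it persists for $\theta'$ in a neighbourhood $U_\xi$ of $\theta$. I would show this by induction on $t$:
\begin{itemize}
\item At a given $t$, assume $s_{i,t}(\theta')$ is $C^1$ on $U_\xi$.
\item Then $h(s_{i,t}(\theta'),\theta')$ is $C^1$ by (A1), and $(s_{i,t},\theta')$ stays in the interior of the same region after shrinking $U_\xi$ (finitely many shrinkings, since there are $NT$ cells).
\item On that region $a^{*}_{i,t}=B_k^{-1}h_k$ with fixed $B_k$, so it is $C^1$ with derivative \eqref{eq:lpsens} plus the chain-rule term $B_k^{-1}\partial_s h_k\,S_{i,t}$.
\item By (A4), $s_{i,t+1}=\phi_i(\cdot)$ is then $C^1$, with derivative exactly the recursion $S_{i,t+1}=\partial_s\phi_i\,S_{i,t}+\partial_a\phi_i\,\partial_\theta a^{*}_{i,t}+\partial_\theta\phi_i$ of Algorithm~\ref{alg:framework}.
\end{itemize}
Summing the $C^1$ rewards $\pi_i$ over $(i,t)$ gives $\partial_\theta L(\theta,\xi)=\widehat{J}'_{\mathrm{IPA}}(\theta,\xi)$ on $\Omega_\theta$.

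\emph{Step 2 (interchange).} Write
\[
J(\theta+\eta e)-J(\theta)=\mathbb{E}\bigl[L(\theta+\eta e,\xi)-L(\theta,\xi)\bigr].
\]
To pass $\eta\to0$ inside the expectation, I need an integrable bound on the difference quotients. Following Remark~\ref{rmk:poly}, note that across all critical regions the maps $a^{*}$ are affine in $(s,\theta)$ with finitely many matrices $B_k^{-1}$ (the bases are finite in number). LP optimal solutions with fixed $G$ are also continuous in the right-hand side wherever the problem is feasible. So each $a^{*}$ is globally Lipschitz in $(s,\theta)$, with constant $C_G=\max_k\|B_k^{-1}\|\cdot\|\partial h\|$. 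Composing with the bounded-derivative maps $\phi_i$ and $\pi_i$ from (A4) by the same induction gives pathwise Lipschitz bounds
\[
\|s_{i,t}(\theta')-s_{i,t}(\theta)\|\le K_t|\theta'-\theta|,
\]
where $K_t$ is deterministic and depends only on $C_G$, the derivative bounds, and $t$. This holds even when kinks are crossed between $\theta$ and $\theta'$, because continuity of $a^{*}$ glues the affine pieces together.

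If $\partial_s\pi_i$ is only bounded by something linear in the state, I would instead use the integrable envelope $C(1+\sum_{i,t}\|s_{i,t}\|)$, which is in $L^2$ by (A5). Either way the difference quotients are dominated, so dominated convergence (or, as in the remark, linearity when the pieces are polynomial) yields $\nabla J(\theta)=\mathbb{E}[\partial_\theta L]=\mathbb{E}[\widehat{J}'_{\mathrm{IPA}}]$.

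\emph{Main obstacle.} The hard step is the global Lipschitz continuity of $\theta'\mapsto L(\theta',\xi)$ across basis changes. Step 1 only gives local differentiability, and unbiasedness fails in general IPA precisely when sample paths jump. So I must argue carefully that the primal optimizer of a fixed-$G$ parametric LP is continuous and piecewise affine in $h$. This holds when the optimizer is unique, which (A2) gives generically; the lexicographic tie-break handles boundary points. Without this continuity, reward discontinuities of size $\bar\Delta$ (cf.\ Theorem~\ref{thm:crn}) would contribute a boundary term, and the argument would need to show that term has zero mass. I would try continuity first and fall back to the measure-zero argument of Theorem~\ref{thm:kink} only if needed.
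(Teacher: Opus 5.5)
Your skeleton is the paper's: pathwise differentiability on the full-measure event from (A3), then dominated convergence. The difference is in how you dominate the difference quotients.

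The paper bounds the pathwise derivative on region interiors by a deterministic constant $C'$. It gets this from bounded sensitivities $S_{i,t}$, via $\|A_{i,t}\|\le g_0$, $\|b_{i,t}\|\le c_0$ and $B_{\max}=\max_k\|B_k^{-1}\|$. It then applies the mean value theorem on $[\theta,\theta+h]$, without checking that $L(\cdot,\xi)$ is continuous on that interval.

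You instead prove a global Lipschitz bound for $\theta'\mapsto L(\theta',\xi)$, from continuity and piecewise affinity of the LP solution map. Your $K_t$ plays the role of the paper's $C_T$ and $C'$. Your version is the more complete one. The neighbourhood $U_\xi$ of your Step~1 shrinks with $\xi$, so for fixed $h$ the interval $[\theta,\theta+h]$ in general contains a basis change for a positive-probability set of $\xi$. The paper's mean-value bound is then valid only if $L(\cdot,\xi)$ is continuous across such changes, which is exactly the lemma you supply.

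There are two caveats. First, your continuity lemma concerns the right-hand side only. With $c$ fixed and $(s,\theta)$ entering only through $h$, a generic $c$ (not orthogonal to any of the finitely many edge directions determined by $G$) gives a unique optimal vertex whenever the LP has a finite optimum. The solution map is then continuous and piecewise affine, and no tie-break is needed. But (A4) allows $c=c(s,\theta)$. In that case the optimal vertex can jump at dual-degenerate boundaries, as in the $v_2\to v_3$ switch of Figure~\ref{fig:basis_change} and the jumps $\bar\Delta$ of Theorem~\ref{thm:crn}. A lexicographic rule only selects a vertex; it cannot make the selection continuous. So both your Lipschitz bound and the paper's mean-value step break there.

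Second, your fallback does not repair this. That kinks are Lebesgue-null in $\Theta$ for each $\xi$ (Theorem~\ref{thm:kink}) does not remove the boundary term. A jump of size $\Delta$ whose location has density $f$ near $\theta$ contributes about $\Delta f(\theta)$ to $\lim_{h\to0}h^{-1}\mathbb{E}[L(\theta+h,\xi)-L(\theta,\xi)]$, which is the classical source of IPA bias. You should either state the result for $c$ independent of $(s,\theta)$, which is the RHS-parametric setting of the paper's applications, or assume outright that the solution map is continuous along the orbit.
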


\begin{proof}{Proof.}
We verify the three conditions for exchanging the derivative and the expectation: (i) pathwise differentiability, (ii) a dominating function, (iii) integrability.

\emph{Step 1: Critical-region structure.} By parametric-LP theory \citep{gal1995,bazaraa2010}, the $(s,\theta)$-space is partitioned into finitely many polyhedral critical regions $\{P_k\}_{k=1}^K$. On each region $P_k$, the optimal basis $B_k$ (the set of $n$ active constraint rows) is constant, and the primal solution is the affine map $a^{*}(s,\theta)=B_k^{-1}h_k(s,\theta)$, where $h_k$ collects the RHS of the active rows. The active-row dual is $\lambda^{*}_k=c_k^{\top}B_k^{-1}$. Since $G$ is constant (A1) and $h$ is affine (A1), $a^{*}$ is affine in $(s,\theta)$ on $\operatorname{int}(P_k)$.

\emph{Step 2: Pathwise differentiability.} Define $E_\theta$ as the event that the orbit $\{(s_{i,t}(\theta,\xi),\theta)\}_{i,t}$ stays in region interiors for all $(i,t)$. By (A3), $\mathbb{P}(E_\theta)=1$. On $E_\theta$, each $a^{*}_{i,t}$ is locally affine in $(s,\theta)$, so the state recursion $s_{i,t+1}=\phi_i(s_{i,t},a^{*}_{i,t},\xi_t,\theta)$ is a $C^1$ composition of $C^1$ maps (by A4), and the reward $\pi_i$ is $C^1$ (A4). Therefore $L(\cdot,\xi)$ is $C^1$ in a neighborhood of $\theta$, with pathwise derivative exactly $\widehat{J}'_{\mathrm{IPA}}(\theta,\xi)$ obtained by the chain rule applied to the forward-sensitivity recursions---the standard IPA construction of \citet{glasserman1991}.

\emph{Step 3: A pathwise dominating function.} Expanding the chain rule on the interior of a critical region, the action and state sensitivities satisfy $\partial_\theta a^{*}_{i,t}=B_k^{-1}(\partial_\theta h_k+\partial_s h_k\,S_{i,t})$ and $S_{i,t+1}=A_{i,t}S_{i,t}+b_{i,t}$ with $A_{i,t}=\partial_s\phi_i+\partial_a\phi_i\,B_k^{-1}\partial_s h_k$ and $b_{i,t}=\partial_a\phi_i\,B_k^{-1}\partial_\theta h_k+\partial_\theta\phi_i$. By (A4) the derivatives of $c,\phi_i,\pi_i$ are bounded, by (A1) those of $h$ are constant, and the finitely many bases give $B_{\max}:=\max_k\|B_k^{-1}\|<\infty$; hence $\|A_{i,t}\|\le g_0$ and $\|b_{i,t}\|\le c_0$ for $\xi$- and state-independent constants $g_0,c_0$. Unrolling over the finite horizon from $S_{i,0}=0$, $\|S_{i,t}\|\le c_0\sum_{\tau=0}^{t-1}g_0^{t-1-\tau}\le C_T$ for a finite constant $C_T$ (depending on $T$ but not on $\xi$ or $s$). Substituting into $\partial_\theta L=\sum_{i,t}[\partial_a\pi_i\,\partial_\theta a^{*}_{i,t} +\partial_s\pi_i\,S_{i,t}+\partial_\theta\pi_i]$ yields the pathwise bound
\begin{equation}\bigl|\partial L/\partial\theta(\theta,\xi)\bigr|\le
G:=C'\quad\text{a.s.}\end{equation}
for a finite constant $C'$ (depending on $T,N,B_{\max}$ and the derivative bounds). Crucially the bound involves the \emph{sensitivity} $S_{i,t}$, which is uniformly bounded for finite $T$, rather than the state norm itself.

\emph{Step 4: Integrability of $J$ and of the bound.} Since $G=C'$ is a finite constant, $\mathbb{E}_\xi G<\infty$ trivially. Assumption (A5) is used separately to ensure the outer objective is finite: because $a^{*}=B_k^{-1}h_k(s,\theta)$ grows at most linearly in $\|s\|$ and $\pi_i$ is $C^1$ (A4), $|L(\theta,\xi)|\le C''\bigl(1+\sum_{i,t}\|s_{i,t}\|\bigr)$, so (A5) (via Cauchy--Schwarz and finiteness of $N\!\cdot\!T$) gives $\mathbb{E}_\xi|L|<\infty$ and hence $J(\theta)=\mathbb{E}_\xi L$ is well defined.

\emph{Step 5: Exchange.} Set $\widehat{J}'_{\mathrm{IPA}}=0$ on $\Omega\setminus E_\theta$ (a null set), so $\partial L/\partial\theta=\widehat{J}'_{\mathrm{IPA}}$ a.s. By the mean value theorem the difference quotient satisfies $\bigl|[L(\theta{+}h,\xi)-L(\theta,\xi)]/h\bigr|\le\sup_{\theta'\in[\theta,\theta+h]} |\partial L/\partial\theta(\theta',\xi)|\le G$ uniformly in small $h$, and this bound is $\xi$-integrable. The dominated convergence theorem then exchanges the derivative and the expectation:
$$\nabla J(\theta)=\nabla_\theta\mathbb{E}_\xi[L(\theta,\xi)]
=\mathbb{E}_\xi\bigl[\partial L/\partial\theta\bigr]
=\mathbb{E}_\xi[\widehat{J}'_{\mathrm{IPA}}].$$
This completes the proof.\Halmos
\end{proof}

\begin{theorem}[Critical events are measure-zero]\label{ecthm:kink}
Under (A2)--(A4), $\Theta\setminus\Theta^{\circ}$ has Lebesgue measure zero.
\end{theorem}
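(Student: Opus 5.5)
The plan is to bound the complement of $\Theta^{\circ}$ by the set of policies at which some path hits a critical-region boundary with positive probability, and then to show this set is Lebesgue-null via a Fubini argument. The main paper sketches only the regular-value step.

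Concretely, define the bad set
\[
\mathcal{K}:=\{(\theta,\xi): (s_{i,t}(\theta,\xi),\theta)\in\partial P_k \text{ for some } i,t,k\}.
\]
We have $\theta\in\Theta\setminus\Theta^{\circ}$ exactly when the section $\mathcal{K}_\theta$ has positive $\mathbb{P}$-measure. By Fubini--Tonelli applied to $\mathrm{Leb}\otimes\mathbb{P}$, it suffices to show that for $\mathbb{P}$-a.e.\ $\xi$ the section $\mathcal{K}^\xi=\{\theta:(\theta,\xi)\in\mathcal{K}\}$ is Lebesgue-null. If so, $(\mathrm{Leb}\otimes\mathbb{P})(\mathcal{K})=0$, hence $\mathbb{P}(\mathcal{K}_\theta)=0$ for a.e.\ $\theta$, which is the claim.

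Next, fix $\xi$ and decompose $\mathcal{K}^\xi$ by itinerary. There are finitely many regions, agents and periods, so there are finitely many basis sequences $\sigma=(k_{i,t})_{i,t}$. For each $\sigma$ and each first crossing index $(i,t)$ together with a boundary facet of $P_{k_{i,t}}$, consider the orbit computed under the frozen itinerary $\sigma$ up to time $t$. By (A1) and (A4) this orbit is a $C^1$ map of $\theta$ on the open set where the itinerary is realized. The hitting condition is then a residual equation $f_{\xi,\sigma}(\theta)=0$, whose components are the facet's affine function evaluated at the orbit. Thus $\mathcal{K}^\xi$ is contained in a finite union of zero sets $f_{\xi,\sigma}^{-1}(\{0\})$.

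For each such zero set, invoke (A2): $0$ is a regular value of the basis-change residual map. By the regular value theorem (implicit function theorem, needing only $C^1$ from (A4)), each zero set is an embedded $C^1$ submanifold of codimension at least one in $\Theta\subseteq\mathbb{R}^d$. In the paper's setting $d\le M$ the dimension is at most zero; in general it is $d-1$. In either case the zero set is Lebesgue-null, since a $C^1$ submanifold of positive codimension is covered by countably many $C^1$ graphs. A finite union of null sets is null, which completes the per-$\xi$ step. Measurability of $\mathcal{K}$ follows because it is a finite union of preimages of closed sets under maps that are jointly measurable and continuous in $\theta$.

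The main obstacle is making the reduction to regular-value form honest. Transversality must hold for $\mathbb{P}$-a.e.\ $\xi$ simultaneously for all itineraries, and it must be applied to a residual that is genuinely a function of $\theta$ (codimension $\ge1$). A degenerate situation, such as the orbit being independent of $\theta$ and sitting on a facet, would give a full-dimensional zero set. I would handle this by reading (A2) as the statement that, for a.e.\ $\xi$, every residual $f_{\xi,\sigma}$ has $0$ as a regular value on its domain, which rules out such flat coincidences. If the residual is vector-valued, I would use only its scalar facet component, which has nonzero derivative where it vanishes.
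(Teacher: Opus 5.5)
Your proposal is correct. It uses the same core tool as the paper: the regular value theorem applied to $C^1$ basis-change residuals, with the $C^1$ regularity supplied by (A4). It is organized differently, though, and is more careful in three places.

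The paper stacks every tightness residual $r_{i,t,j}$ into one map $f_\xi:\Theta\to\mathbb{R}^M$. It reads (A2) as saying $0$ is a regular value of $f_\xi$ for $\mathbb{P}$-a.e.\ $\xi$, concludes that $f_\xi^{-1}(\{0\})$ has dimension $d-M\le 0$, and stops there.

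\emph{First}, you add the Fubini--Tonelli step. It turns the per-$\xi$ statement (the bad $\theta$-section is Lebesgue-null for a.e.\ $\xi$) into the per-$\theta$ statement (the bad $\xi$-section is $\mathbb{P}$-null for a.e.\ $\theta$). The per-$\theta$ statement is what membership in $\Theta^{\circ}$ actually means, and the paper never makes this passage.

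\emph{Second}, you cover the bad set by a finite union of zero sets of scalar facet residuals, each of codimension one. That is the right object. A basis change at some cell is the vanishing of \emph{one} residual, so the bad set is $\bigcup_j r_j^{-1}(0)$. The paper's $f_\xi^{-1}(\{0\})=\bigcap_j r_j^{-1}(0)$ is only the set where all residuals vanish simultaneously. Moreover, for $d<M$, regularity of the stacked map merely says this intersection is empty. Your per-component reading of (A2) is the one under which the theorem actually follows. You should therefore delete your sentence about $d\le M$ giving dimension at most zero: your pieces are $(d-1)$-dimensional, and that already suffices.

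\emph{Third}, freezing the itinerary up to the first crossing makes each residual genuinely $C^1$ on the open set $U_\sigma$ where that itinerary is realized. The paper simply asserts that the slack at the current basis is $C^1$ in $\theta$, which fails across region boundaries.

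The paper's route buys brevity, and for $d=1$ (all of its experiments) both arguments reduce to isolated zeros of scalar residuals.

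One small fix: the orbit need not be continuous in $\theta$. Since $c$ depends on $\theta$, the optimal vertex can jump across a dual-feasibility boundary, which is exactly the vertex switch in the paper's figure. So justify measurability of $\mathcal{K}$ by joint Borel measurability of $(\theta,\xi)\mapsto s_{i,t}(\theta,\xi)$, not by continuity in $\theta$. This holds because $a^{*}$ is piecewise affine on finitely many Borel regions and is composed with measurable $\phi_i$.
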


\begin{proof}{Proof.}
A basis change at $(i,t)$ occurs iff the orbit crosses a critical-region boundary $\partial P_k$, i.e., iff some tightness residual $r_{i,t,j}(\theta,\xi):=$ (slack of constraint $j$ at the current basis) vanishes. There are finitely many such residuals across all $(i,t,j)$; stack them into $f_\xi:\Theta\to\mathbb{R}^M$. Each $r_{i,t,j}$ is $C^1$ in $\theta$ (by A4 and the parametric-LP structure), so $f_\xi$ is $C^1$.

By (A2), $0$ is a regular value of $f_\xi$ for $\mathbb{P}$-a.e.\ $\xi$, meaning $D_\theta f_\xi$ is surjective at every preimage point. By the regular value theorem, $f_\xi^{-1}(\{0\})$ is a smooth submanifold of dimension $d-M$ (when nonempty). In our setting $d\le M$, so $d-M\le 0$: the preimage is either empty (when $d<M$) or a zero-dimensional submanifold, i.e.\ a discrete set of points (when $d=M$); in either case it has $d$-dimensional Lebesgue measure zero. In the special case $d=1$ (a one-dimensional policy parameter, as used in all numerical experiments of this paper), each zero of the residual map is isolated (the linearisation dominates the little-o remainder), giving a closed discrete---hence countable and measure-zero---zero set. The general $d\le M$ case follows from the regular value theorem as stated above. (Sard's theorem is not needed here: it controls the set of critical \emph{values} in the codomain, whereas (A2) already makes $0$ a regular value, so the regular value theorem applies directly; only $C^1$ regularity, guaranteed by (A4), is required. Finally, the genericity of the regular-value condition (A2) itself follows from Thom transversality; the exceptional non-transverse parameters are handled by a lexicographic RHS perturbation.\Halmos
\end{proof}

\noindent\textit{Scope remark.} The statement above holds for a general low-dimensional policy $\theta\in\mathbb{R}^d$ with $d\le M$; its proof rests on the regular value theorem, a standard result of differential topology. All numerical experiments in this paper use the scalar case $d=1$, for which the zeros of the $C^1$ residual map are isolated and the measure-zero conclusion follows from elementary real analysis (closed, discrete, hence countable, hence null). The $d>1$ generality is therefore a theoretical completeness statement; it is not exercised by any testbed reported here.

\begin{theorem}[CRN covariance decomposition]\label{ecthm:crn}
$\mathrm{Var}(D)\le 2\bigl(\mathrm{Var}(D)|_{\kappa=0}+\bar{\Delta}^2\,\mathbb{E}[\kappa^2]\bigr)$, where $D=L(\theta_a,\xi)-L(\theta_b,\xi)$, $\kappa$ counts basis-disagreement cells and $\bar{\Delta}$ bounds reward jumps. The universal factor $2$ absorbs the smooth/kink cross-covariance and does not affect the qualitative conclusion: CRN helps within a region ($\kappa=0$) and degrades with $\kappa$.
\end{theorem}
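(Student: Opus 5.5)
The plan is to split the paired difference into a smooth part and a kink part, and then combine them with the elementary inequality $\mathrm{Var}(X+Y)\le 2\mathrm{Var}(X)+2\mathrm{Var}(Y)$. Under CRN both runs share the same $\xi$, so for each cell $(i,t)$ we record the active-row signatures $B_{i,t}(\theta_a)$ and $B_{i,t}(\theta_b)$. Let $\mathcal{K}(\xi)$ be the set of cells where they differ, so that $\kappa=|\mathcal{K}|$.

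The decomposition is $D=D_{\mathrm{s}}+D_{\mathrm{k}}$. Here $D_{\mathrm{s}}$ is the paired difference that would arise if every cell were evaluated under a common basis, namely the one at $\theta_a$ extended affinely via $a^{*}=B_k^{-1}h_k$ from Step~1 of the proof of Theorem~\ref{ecthm:ipa}. The remainder $D_{\mathrm{k}}=\sum_{(i,t)\in\mathcal{K}}\Delta_{i,t}$ collects the per-cell reward discrepancies caused by the basis switch. By construction $D_{\mathrm{s}}$ coincides with $D$ on the event $\{\kappa=0\}$.

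I will make $\mathrm{Var}(D)|_{\kappa=0}$ precise as the variance of this smooth surrogate $D_{\mathrm{s}}$, that is, the variance CRN would achieve inside a single critical region. This is the interpretive step that needs care, and it must be stated explicitly so the bound is meaningful. Given that definition, the combining inequality immediately gives
\[
\mathrm{Var}(D)\le 2\,\mathrm{Var}(D_{\mathrm{s}})+2\,\mathrm{Var}(D_{\mathrm{k}}).
\]

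For the kink term I will use $\mathrm{Var}(D_{\mathrm{k}})\le\mathbb{E}[D_{\mathrm{k}}^2]$. Each $|\Delta_{i,t}|\le\bar\Delta$, since the reward is Lipschitz by (A4) and the primal difference between two vertices of the same feasible polytope is bounded by its diameter under (A1). Hence $|D_{\mathrm{k}}|\le\bar\Delta\,\kappa$ pathwise, and so $\mathbb{E}[D_{\mathrm{k}}^2]\le\bar\Delta^2\mathbb{E}[\kappa^2]$. The finite-second-moment regime $\mathrm{Var}(D)<\infty$, together with (A5), makes every term well defined; if the variance is infinite the claim is vacuous.

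The main obstacle is controlling the propagation effect. A basis switch at $(i,t)$ changes the action there, but it also perturbs downstream states, so later cells in the smooth part are no longer exactly the within-region counterpart. I would first try to charge every downstream deviation to the originating kink cell, absorbing it into $\bar\Delta$ as the bound on the total trajectory-level cost jump caused by one switch. Over a finite horizon with bounded sensitivities (the constant $C_T$ from Step~3 of Theorem~\ref{ecthm:ipa}), this keeps $\bar\Delta$ finite. If that attribution is too loose, the fallback is to redefine $\bar\Delta$ as a per-switch bound on the realized contribution to $L$, which preserves the form of the inequality.
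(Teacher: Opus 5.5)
Your proposal is correct and follows the paper's proof essentially step for step. The paper also splits $D=D^{\mathrm{sm}}+D^{\mathrm{kk}}$ and identifies $\mathrm{Var}(D)|_{\kappa=0}$ with the variance of the smooth remainder $D^{\mathrm{sm}}:=D-D^{\mathrm{kk}}$, which is the same definitional choice you make explicit. It then bounds $|D^{\mathrm{kk}}|\le\bar\Delta\,\kappa$ pathwise and combines via $(a+b)^2\le 2a^2+2b^2$ on the centered variables. Your explicit handling of downstream propagation, charging each basis switch's trajectory-level effect to $\bar\Delta$, is more careful than the paper, which simply treats each surviving jump as a local per-cell jump bounded by $L_\pi\operatorname{diam}(\mathcal{A})$.
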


\begin{proof}{Proof.}
The variance identity $\mathrm{Var}(D)=\mathrm{Var}\,L(\theta_a)+\mathrm{Var}\,L(\theta_b) -2\,\mathrm{Cov}(L(\theta_a),L(\theta_b))$ follows from expanding $\mathrm{Var}(X-Y)$. Under CRN (shared $\xi$), the covariance $\mathrm{Cov}(L(\theta_a),L(\theta_b))$ is positive because the shared shocks drive both members through the same smoothed dynamics.

For the decomposition, write $L(\theta,\xi)=L^{\mathrm{sm}}(\theta,\xi) +L^{\mathrm{kk}}(\theta,\xi)$, where $L^{\mathrm{sm}}$ is the smooth part (within-region contribution) and $L^{\mathrm{kk}}$ collects the jumps at basis changes along the orbit. A jump at cell $(i,t)$ has magnitude bounded by $\bar{\Delta}=L_\pi\cdot\operatorname{diam}(\mathcal{A})$, where $L_\pi$ is the reward's Lipschitz constant (A4) and $\operatorname{diam}(\mathcal{A})$ is the bounded primal diameter (from the finitely many $\|B_k^{-1}\|$ and bounded $\bar h$).

On a cell where the two $\theta$'s share the same basis ($\kappa=0$ there), the jump is common-mode: it occurs at the same $(i,t)$ with the same magnitude, so it cancels in $D=L(\theta_a)-L(\theta_b)$. On a basis-disagreement cell ($\kappa>0$), one member's action jumps where the other's does not, so the jump survives in $D$. Let $D^{\mathrm{kk}}(\xi)$ denote the sum of the surviving jumps and $D^{\mathrm{sm}}:=D-D^{\mathrm{kk}}=D|_{\kappa=0}$ the smooth remainder, so $D=D^{\mathrm{sm}}+D^{\mathrm{kk}}$. Each surviving jump has magnitude at most $\bar{\Delta}$ and there are $\kappa(\xi)$ of them, so $|D^{\mathrm{kk}}(\xi)|\le\bar{\Delta}\,\kappa(\xi)$, whence $\mathrm{Var}_\xi(D^{\mathrm{kk}})\le\mathbb{E}_\xi[(D^{\mathrm{kk}})^2]\le \bar{\Delta}^{2}\,\mathbb{E}_\xi[\kappa(\xi)^2]$, with no assumption that the per-cell jumps be uncorrelated. To combine without controlling the sign of the smooth/kink cross-covariance, apply $(a+b)^2\le 2a^2+2b^2$ to the centered variables $D-\mathbb{E}D=(D^{\mathrm{sm}}-\mathbb{E}D^{\mathrm{sm}}) +(D^{\mathrm{kk}}-\mathbb{E}D^{\mathrm{kk}})$:
\begin{align*}
\mathrm{Var}_\xi(D)
&\le 2\,\mathrm{Var}_\xi(D^{\mathrm{sm}})+2\,\mathrm{Var}_\xi(D^{\mathrm{kk}})\\
&\le 2\,\mathrm{Var}_\xi(D)\big|_{\kappa=0}
+2\,\bar{\Delta}^{2}\,\mathbb{E}_\xi[\kappa(\xi)^2],
\end{align*}
which is the claimed bound; the cross-term $2\,\mathrm{Cov}(D^{\mathrm{sm}},D^{\mathrm{kk}})$ is absorbed by the prefactor $2$, so no assumption on its sign is required.\Halmos
\end{proof}

\begin{theorem}[Variance amplification]\label{ecthm:var}
If the state-sensitivity recursion $S_{t+1}=A_tS_t+b_t$ has per-stage gain $\|A_t\|\le g$ with $g>1$ over $m$ feedback stages, with non-cancelling coherent gains (the scalar case $A_t\ge g>0$ suffices) and integrable shocks, then the IPA-gradient second moment satisfies $\mathbb{E}_\xi|\widehat{J}'_{\mathrm{IPA}}|^2=\Theta(g^{2m})$ in the feedback depth $m$; consequently $\mathrm{Var}[\widehat{J}'_{\mathrm{IPA}}]=\Theta(g^{2m})$ whenever the mean $\nabla J(\theta)$ does not cancel it. The CRN-FD variance grows only polynomially in $m$.
\end{theorem}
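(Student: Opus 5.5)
The plan is to unroll the sensitivity recursion explicitly and work from the resulting product formula. From $S_0=0$ and $S_{t+1}=A_tS_t+b_t$ over the $m$ feedback stages,
\[
S_m=\sum_{\tau=0}^{m-1}\Bigl(\prod_{u=\tau+1}^{m-1}A_u\Bigr)b_\tau .
\]
The IPA estimator produced by \textsc{ForwardSens} is a fixed linear functional of the sequence $(S_t)$ and the action sensitivities, plus bounded terms. These coefficients are bounded by (A4) and by $B_{\max}$, as in Step~3 of the proof of Theorem~\ref{ecthm:ipa}.

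\emph{Upper bound.} Use $\|A_u\|\le g$ and the triangle inequality to get
\[
\|S_m\|\le \sum_{\tau}g^{m-1-\tau}\|b_\tau\|.
\]
Square this and apply Cauchy--Schwarz with weights $g^{m-1-\tau}$. Since $g>1$, the geometric sum satisfies $\sum_\tau g^{m-1-\tau}\le g^m/(g-1)$, so
\[
\mathbb{E}\|S_m\|^2\le \frac{g^m}{g-1}\sum_\tau g^{m-1-\tau}\,\mathbb{E}\|b_\tau\|^2=O(g^{2m}),
\]
provided the shocks have uniformly bounded second moments. This is how "integrable shocks" should be read; if only first moments are given, I would strengthen the condition to second moments. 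The same bound then holds for $\mathbb{E}|\widehat{J}'_{\mathrm{IPA}}|^2$, since it is a bounded linear combination of the $S_t$, $t\le m$, and each earlier $S_t$ carries a smaller power of $g$.

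\emph{Lower bound.} This is the main obstacle, and it is where the non-cancellation hypothesis is needed. In the scalar case $A_t\ge g>0$, I would additionally assume that the shocks $b_\tau$ and the objective weights have a common sign, or at least that $\mathbb{E}b_0\neq 0$ with nonnegative $b_\tau$. Then every term of the unrolled sum has the same sign, so
\[
|S_m|\ge g^{m-1}|b_0|,
\]
and therefore
\[
\mathbb{E}|\widehat{J}'_{\mathrm{IPA}}|^2\ge c^2 g^{2(m-1)}\,\mathbb{E}b_0^2=\Omega(g^{2m}),
\]
where $c>0$ is the weight on the terminal sensitivity. For the matrix case, "coherent gains" would be formalized as a common invariant cone containing the $b_\tau$, with $A_t$ expanding along it by at least a factor $g$, and the scalar argument is repeated along a dual vector of that cone. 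Combining the two bounds gives $\Theta(g^{2m})$. Then
\[
\mathrm{Var}=\mathbb{E}|\widehat{J}'|^2-|\nabla J|^2,
\]
and by Theorem~\ref{ecthm:ipa} the mean equals $\nabla J(\theta)$. Whenever $|\nabla J|^2\le(1-\delta)\,\mathbb{E}|\widehat{J}'|^2$, which is the stated non-cancellation of the mean, the variance is also $\Theta(g^{2m})$.

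\emph{CRN-FD claim.} Write
\[
D_h=\frac{L(\theta+h,\xi)-L(\theta-h,\xi)}{2h}
\]
with common $\xi$. Within a critical region, the mean value theorem gives $D_h=\partial_\theta L(\theta',\xi)$ for some $\theta'$ between $\theta-h$ and $\theta+h$. This reduces the CRN-FD variance to an IPA-type quantity plus the kink term of Theorem~\ref{ecthm:crn}. Showing only polynomial growth in $m$ therefore requires an argument that the difference is controlled by the number of basis-disagreement cells $\kappa$, which is at most $N\cdot m$, rather than by the amplified sensitivity. For this part I would rely on the decomposition of Theorem~\ref{ecthm:crn} together with $\mathbb{E}\kappa^2=O(m^2)$.

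I flag this last step as doubtful. The mean value theorem representation shows that CRN-FD inherits the same amplification as IPA within a region. A polynomial-in-$m$ statement would therefore need extra assumptions, such as bounded states or $h$ shrinking with $m$. I would state it under those assumptions, or weaken it to a comparison statement.
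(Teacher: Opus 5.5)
Your treatment of the IPA second moment follows the paper's route: unroll $S_m=\sum_\tau\bigl(\prod_{u>\tau}A_u\bigr)b_\tau$, bound it above by a geometric sum and below by the $\tau=0$ term under positive scalar gains, then pass to the variance through unbiasedness. In two places your version is tighter than the paper's. First, the paper bounds $\|S_m\|$ pathwise by $O(g^m\max_j\|\xi_j\|)$ and then invokes sub-Gaussian shocks with $\mathbb{E}\max_j\|\xi_j\|^2=O(\log m)$, which strictly gives only $O(g^{2m}\log m)$. Your weighted Cauchy--Schwarz step gives a genuine $O(g^{2m})$ from uniformly bounded second moments. Second, for the lower bound the paper asserts that the terms with $\tau\ge1$ are ``$O(g^{m-2})$ and thus dominated.'' Their sum is in fact of order $g^{m-1}/(g-1)$, the same order as the leading term, and it can cancel it. Your explicit common-sign hypothesis is what makes this step valid. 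Under that hypothesis the mean also grows like $g^{m}$. So your condition $|\nabla J|^2\le(1-\delta)\,\mathbb{E}|\widehat{J}'_{\mathrm{IPA}}|^2$, rather than the paper's ``bounded $\nabla J$,'' is the consistent reading of non-cancellation.

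The genuine gap is the final clause. You do not establish that the CRN-FD variance is polynomial in $m$, and the route you sketch cannot work. In the bound of Theorem~\ref{ecthm:crn}, the term $\mathrm{Var}(D)|_{\kappa=0}$ is exactly the within-region part. By your own mean-value argument it equals $4h^2$ times an IPA-type variance. The estimate $\mathbb{E}\kappa^2=O(m^2)$ controls only the jump term and leaves the amplified part untouched.

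The paper instead bypasses the sensitivity recursion altogether, since CRN-FD only differences realized costs and never propagates $S_t$. Its ``hence'' step is the elementary bound $\mathrm{Var}(D_h)\le\bigl(\mathrm{Var}\,L(\theta+h,\xi)+\mathrm{Var}\,L(\theta-h,\xi)\bigr)/(2h^2)$ with $h$ held fixed. This is combined with an additional hypothesis, not in the theorem statement, that the cost channel does not itself amplify (bounded-variance holding/backlog costs in the Beer Game), so that $\mathrm{Var}(L)$ is polynomial in $m$.

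Your instinct that an extra assumption is needed is therefore right, and ``bounded states'' is essentially the paper's assumption. But ``$h$ shrinking with $m$'' points the wrong way. Shrinking $h$ is precisely what makes the single-region mean-value representation valid, so CRN-FD would then inherit IPA's amplified variance, and any kink contribution is further inflated by $1/h^2$. For fixed $h$, your objection dissolves. If the within-region slope stayed of order $g^m$ across $[\theta-h,\theta+h]$, the increment $L(\theta+h,\xi)-L(\theta-h,\xi)$ would fluctuate on the order of $hg^m$, which is incompatible with $\mathrm{Var}(L)=\mathrm{poly}(m)$. So for large $m$ the orbit changes basis inside the interval, and the crude $1/h^2$ bound (at the price of FD bias) is the operative one. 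To close the proof, state the fixed-$h$ and non-amplifying-cost hypotheses explicitly and use that bound.
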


\begin{proof}{Proof.}
Unrolling the state-sensitivity recursion $S_{t+1}=A_tS_t+b_t$ with $S_0=0$:
$$S_m=\sum_{j=0}^{m-1}\Bigl(\prod_{\ell=j+1}^{m-1}A_\ell\Bigr)b_j.$$

\emph{Upper bound.} With $\|A_\ell\|\le g$ and $\|b_j\|\le c\|\xi_j\|$:
$$\|S_m\|\le\sum_{j=0}^{m-1}g^{m-1-j}\,c\|\xi_j\|
\le c\,g^{m-1}\sum_{j=0}^{m-1}g^{-j}\|\xi_j\|
\le c\,g^{m-1}\cdot\frac{g}{g-1}\cdot\max_j\|\xi_j\|
=O\bigl(g^{m}\max_j\|\xi_j\|\bigr).$$

\emph{Lower bound.} Under non-cancelling positive gains (scalar $A_\ell\ge g>0$), the $j{=}0$ term is $\bigl(\prod_{\ell=1}^{m-1}A_\ell\bigr)b_0\ge g^{m-1}\|b_0\|$. The remaining terms are $O(g^{m-2})$ and thus dominated for $g>1$. Hence $\|S_m\|\ge c'\,g^{m}\|\xi_0\|$ for some $c'>0$, whenever $b_0\ne0$ (generic).

\emph{Combining.} The IPA estimator is $\widehat{J}'_{\mathrm{IPA}}=\sum_t\langle w_t,S_t\rangle$ with weights $w_t=\partial_a\pi_{i(t)}$ (the reward gradients, bounded by A4). The upper and lower bounds on $\|S_t\|$ give $|\widehat{J}'_{\mathrm{IPA}}|^{2}=\Theta(g^{2m}\max_j\|\xi_j\|^{2})$, hence the second moment $\mathbb{E}|\widehat{J}'_{\mathrm{IPA}}|^{2} =\Theta\bigl(g^{2m}\,\mathbb{E}\max_j\|\xi_j\|^{2}\bigr)$. For sub-Gaussian shocks $\mathbb{E}\max_j\|\xi_j\|^{2}=O(\log m)$, polynomially bounded in $m$ and thus negligible next to the exponential $g^{2m}$ factor, so $\mathbb{E}|\widehat{J}'_{\mathrm{IPA}}|^{2}=\Theta(g^{2m})$ in $m$. Since IPA is unbiased, $\mathrm{Var}[\widehat{J}'_{\mathrm{IPA}}] =\mathbb{E}|\widehat{J}'_{\mathrm{IPA}}|^{2}-|\nabla J(\theta)|^{2}$, which is $\Theta(g^{2m})$ whenever $|\nabla J(\theta)|$ does not itself grow at rate $g^{m}$---in particular at any $\theta$ with bounded $\nabla J(\theta)$, e.g.\ away from the regime where the mean sensitivity amplifies.

\emph{FD growth.} CRN-FD differences the realized cost $L$, which depends on the trajectory only through the realized states $s_t$ and never propagates the sensitivity $S_t=\partial s_t/\partial\theta$; its variance is therefore governed by the state dynamics $\phi$ rather than by the amplifying sensitivity recursion. When the cost channel does not itself amplify with feedback depth---as in the Beer Game, where the bullwhip lives in the reactive order forecast while the holding/backlog cost stays bounded-variance---$\mathrm{Var}(L)$, and hence the CRN-FD variance, grows only polynomially in $m$.\Halmos
\end{proof}

\begin{theorem}[Error-budget characterization]\label{ecthm:eps}
$\varepsilon\ge0$, and for a static surrogate $\varepsilon=0$ iff the optimal action is a.s.\ state-independent. Agnostic to inner type (LP, MILP, DP/RL).
\end{theorem}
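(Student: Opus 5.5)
The plan is to argue pointwise in the state and then take expectations, treating the inner problem abstractly so that the argument is visibly agnostic to whether it is an LP, MILP, or DP/RL policy. Throughout, $\pi(a,s)$ is read as the agent's per-period objective, which the exact decision maximizes over the feasible set $\mathcal{X}(s)$. Thus for every $s$,
\[
a^{*}_{\mathrm{ex}}(s)\in\arg\max_{a\in\mathcal{X}(s)}\pi(a,s),
\]
and the surrogate $a^{*}_{\mathrm{su}}(s)$ is assumed feasible, $a^{*}_{\mathrm{su}}(s)\in\mathcal{X}(s)$. The only structure used is optimality of the exact inner solution. Whether it comes from a simplex basis, a branch-and-bound incumbent, or a Bellman argmax is irrelevant, and this is what delivers the "agnostic to inner type" clause. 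I would state explicitly that feasibility of the surrogate decision is the standing requirement.

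\emph{Step 1 (nonnegativity).} By optimality, the pointwise gap satisfies
\[
\delta(s):=\pi(a^{*}_{\mathrm{ex}}(s),s)-\pi(a^{*}_{\mathrm{su}}(s),s)\ge0
\]
for every $s$. Integrating against the state law gives $\varepsilon=\mathbb{E}_s[\delta(s)]\ge0$. Integrability follows from (A4)--(A5) exactly as in Step~4 of the proof of Theorem~\ref{ecthm:ipa}: rewards grow at most linearly in $\|s\|$.

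\emph{Step 2 (characterization for a static surrogate).} A static surrogate is a constant action $a^{*}_{\mathrm{su}}(s)\equiv\bar a$, taken to be the best fixed action,
\[
\bar a\in\arg\max_{a}\mathbb{E}_s[\pi(a,s)].
\]
For the ``if'' direction, suppose $a^{*}_{\mathrm{ex}}(s)=a_0$ a.s. Then $a_0$ is feasible and attains the pointwise maximum a.s., so it maximizes $\mathbb{E}_s[\pi(a,s)]$. Hence $\mathbb{E}\pi(\bar a,s)=\mathbb{E}\pi(a_0,s)=\mathbb{E}\pi(a^{*}_{\mathrm{ex}}(s),s)$, which gives $\varepsilon=0$.

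For the ``only if'' direction, $\varepsilon=0$ together with $\delta\ge0$ forces $\delta(s)=0$ a.s. So $\bar a$ attains the pointwise maximum for a.e.\ $s$, i.e.\ $\bar a$ is an optimal action a.s. This means the optimal action can be chosen state-independently.

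\emph{Step 3 (the main obstacle: non-uniqueness).} To upgrade "some optimal selection is constant" to "the optimal action is constant", I would invoke uniqueness of the inner optimum. For LP inners this is (A2). For DP/RL inners it is a strict-argmax convention. For MILP inners the statement is read as "there exists an a.s.\ state-independent optimal selection." I expect this to be the delicate point, and I would state the theorem in the selection form to cover ties. A second subtlety is that the surrogate's feasibility must not depend on $s$; otherwise a constant action may be infeasible for some states. I would handle this by assuming $\bar a\in\bigcap_s\mathcal{X}(s)$ a.s., or by noting that when the optimal action is state-independent it is automatically feasible a.s.

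Finally, I would remark that Remark~\ref{rmk:delta} follows separately by the standard $2\varepsilon_{\max}$ sandwich argument.
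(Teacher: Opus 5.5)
Your proof is correct and follows the paper's route: optimality of the exact decision makes the pointwise gap nonnegative, and for a static surrogate a nonnegative integrand with zero mean must vanish a.s., so the constant action is optimal for a.e.\ state. Your extra care (taking the constant to be the best fixed action, and flagging both the feasibility of a constant action and ties in the argmax) makes explicit what the paper's proof glosses over: for an arbitrary constant $a_0$ its argument only yields ``$\varepsilon=0$ iff $a_0$ is a.s.\ optimal,'' and passing to ``the optimal action is a.s.\ constant'' needs exactly the uniqueness you invoke.
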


\begin{proof}{Proof.}
\emph{Non-negativity.} Since $a^{*}_{\mathrm{ex}}(s)$ is the exact maximizer of $\pi(\cdot,s)$, we have $\pi(a^{*}_{\mathrm{ex}}(s),s)\ge\pi(a^{*}_{\mathrm{su}}(s),s)$ pointwise for every $s$. Taking expectation over $s$ gives $\varepsilon=\mathbb{E}_s[\pi(a^{*}_{\mathrm{ex}})-\pi(a^{*}_{\mathrm{su}})]\ge0$.

\emph{Characterization of $\varepsilon=0$.} For a static surrogate $a^{*}_{\mathrm{su}}(s)\equiv a_0$:
$$\varepsilon=\mathbb{E}_s\bigl[\max_a\pi(a,s)\bigr]-\mathbb{E}_s[\pi(a_0,s)]
=\mathbb{E}_s\bigl[\max_a\pi(a,s)-\pi(a_0,s)\bigr].$$
This equals $0$ iff $\max_a\pi(a,s)=\pi(a_0,s)$ for $\mathbb{P}_s$-a.e.\ $s$ (since the integrand is nonnegative). That is, $a_0$ is optimal for a.e.\ $s$, meaning the argmax $\arg\max_a\pi(a,s)$ is a.s.\ constant---the optimal decision does not depend on the realized state. This state-independence is the operative condition; it is implied by, but not equivalent to, the iid/martingale-difference condition $\mathcal{L}(s_{t+1}\mid s_t)=\mathcal{L}(s_{t+1})$, which is one common sufficient case (a state whose conditional law carries no information beyond the marginal).

\emph{Agnosticism to inner type.} The argument uses only $\pi$ and the state law $\mathbb{P}_s$, not the algorithm computing $a^{*}_{\mathrm{ex}}$. Hence it applies whether the exact decision comes from an LP, MILP, or DP/RL solver.\Halmos
\end{proof}

\begin{remark}[Outer-optimum shift is bounded by $\varepsilon$]\label{ecrmk:delta}
If $|J(\theta,a^{*}_{\mathrm{ex}})-J(\theta,a^{*}_{\mathrm{su}})|\le\varepsilon_{\max}$ for all $\theta$, then adopting the surrogate's optimum costs at most $\delta:=J(\theta^{*}_{\mathrm{su}})-J(\theta^{*}_{\mathrm{ex}})\le2\varepsilon_{\max}$ in true objective. When the surrogate is one-sided optimistic (as an LP relaxation of a minimization is), the bound tightens to $\delta\le\varepsilon_{\max}$.
\end{remark}

\begin{proof}{Proof.}
From $|f-g|\le\varepsilon$: $f\le g+\varepsilon$ and $g\le f+\varepsilon$. With $f=J(\cdot,a^{*}_{\mathrm{ex}})$, $g=J(\cdot,a^{*}_{\mathrm{su}})$, $\theta^{*}_{\mathrm{ex}}=\arg\max f$, $\theta^{*}_{\mathrm{su}}=\arg\max g$:
\begin{align*}
f(\theta^{*}_{\mathrm{ex}}) &\le g(\theta^{*}_{\mathrm{ex}})+\varepsilon
   \le g(\theta^{*}_{\mathrm{su}})+\varepsilon
   \le f(\theta^{*}_{\mathrm{su}})+2\varepsilon,
\end{align*}
so $f(\theta^{*}_{\mathrm{ex}})-f(\theta^{*}_{\mathrm{su}})\le 2\varepsilon_{\max}$. For a one-sided optimistic surrogate ($g\le f$ pointwise), the last inequality tightens to $g(\theta^{*}_{\mathrm{su}})\le f(\theta^{*}_{\mathrm{su}})$, giving $\delta\le\varepsilon_{\max}$.\Halmos
\end{proof}

\section{Supporting verification of assumption (A5)}\label{ec:a5}

Assumption (A5) ($\mathbb{E}\sum_{i,t}\|s_{i,t}\|^{2}<\infty$) is verified per testbed as follows. For the controlled testbeds TB1--TB2 the state spaces are bounded by construction (finite capacity caps, finite demand distribution with light tails), so the sum is trivially finite. For the real-data testbeds TB5 (SupplyGraph) and TB4 (JD.com), the inventory states are bounded by the finite planning horizon and the bounded empirical demand distributions; TB6 (inventory MDP) has a finite state space by construction; TB3 (continuous LQ) has Gaussian state of finite variance (the LQ regulator is stable). In every case $\mathbb{E}\sum_{i,t}\|s_{i,t}\|^{2}$ is finite for finite $T$, satisfying (A5).

\section{Glossary of abbreviations}\label{ec:glossary}
Table~\ref{ectab:glossary} lists every abbreviation used in the paper.

\begin{table}[H]
\caption{Abbreviations used in this paper.}\label{ectab:glossary}
\small
\centering
\begin{tabular}{@{}lll@{}}
\toprule
Abbreviation & Full name & First use \\
\midrule
ABM & Agent-based model & \S5.4 \\
CRN & Common random numbers & \S1.3 \\
DCT & Dominated convergence theorem & \S3.2 \\
DP & Dynamic program & \S1 \\
EWMA & Exponentially weighted moving average & \S4.2 \\
FD & Finite differences & \S1.4 \\
FDC & Fulfillment distribution center & \S1.4 \\
FD-SA & Finite-difference stochastic approximation & \S5.1 \\
GOC & (JD) Global Optimization Challenge & \S5.1 \\
IPA & Infinitesimal perturbation analysis & \S1.3 \\
KG & Knowledge Gradient & \S5.1 \\
KKT & Karush--Kuhn--Tucker & \S1.3 \\
LP & Linear program & \S1 \\
LQ & Linear-quadratic & \S2.2 \\
LQR & Linear-quadratic regulator & \S2.2 \\
MARL & Multi-agent reinforcement learning & \S1.3 \\
MDP & Markov decision process & \S2.2 \\
MILP & Mixed-integer linear program & \S1.1 \\
MPEC & Mathematical program with equilibrium constraints & \S1.3 \\
PRIME & Primal-IPA with Robust Multi-start \& Ensemble (the proposed solver) & \S4 \\
RDC & Regional distribution center & \S1.4 \\
RHS & Right-hand side & \S1.3 \\
RL & Reinforcement learning & \S1.1 \\
RNG & Random number generator & \S4.3 \\
SK & Stochastic Kriging & \S5.1 \\
SKU & Stock-keeping unit & \S1.4 \\
SO & Simulation optimization & \S1 \\
SOSO & Simulation-Optimization of Systems of Optimizers & \S1.1 \\
SPSA & Simultaneous-perturbation stochastic approximation & \S5.1 \\
VI & Value iteration & \S5.1 \\
\bottomrule
\end{tabular}
\end{table}

\bibliographystyle{informs2014}
\bibliography{references}

\end{document}